\newtheorem{theorem}{Theorem}
\newtheorem{lemma}{Lemma}
\newtheorem{proposition}{Proposition}
\newtheorem{definition}{Definition}
\newtheorem{remark}{Remark}
\newcommand{\N}{{\mathbb N}}
\newcommand{\R}{{\mathbb R}}
\newcommand{\C}{{\mathbb C}}
\newcommand{\pa}{{\partial}}
\newcommand{\na}{{\nabla}}
\newcommand{\eps}{{\varepsilon}}
\renewcommand{\a}{\alpha}
\renewcommand{\b}{\beta}
\newcommand{\g}{\gamma}
\renewcommand{\d}{\delta}
\newcommand{\D}{\Delta}
\newcommand{\om}{\omega}
\newcommand{\OM}{\Omega}
\renewcommand{\r}{\rho}
\newcommand{\e}{\varepsilon}
\newcommand{\f}{\varphi}
\newcommand{\F}{\Phi}
\newcommand{\y}{\eta}
\newcommand{\p}{\psi}
\newcommand{\Tc}{\mathcal{T}}
\newcommand{\pd}{\partial}
\newcommand{\loc}{{\rm loc}}
\newcommand{\supp}{{\rm supp}\,}
\newcommand{\cal}[1]{\mathcal{#1}} % A ENLEVER AVER ARTICLE
\def\curl{{\rm curl }\,}
\def\div{\hbox{div  }}
\def\capa{\mathrm{cap}}
\title{The 2D Euler equations on singular  domains}
\author[D. G\'erard-Varet \& C. Lacave]{David G\'erard-Varet \& Christophe Lacave. }
\address[D. G\'erard-Varet]{Universit\'e Paris-Diderot (Paris 7)\\
Institut de Math\'ematiques de Jussieu\\
UMR 7586 - CNRS\\
175 rue du Chevaleret\\
75013 Paris\\
France} \email{gerard-varet@math.jussieu.fr}
\address[C. Lacave]{Universit\'e Paris-Diderot (Paris 7)\\
Institut de Math\'ematiques de Jussieu\\
UMR 7586 - CNRS\\
175 rue du Chevaleret\\
75013 Paris\\
France} \email{lacave@math.jussieu.fr}
\begin{document}
\maketitle
\begin{abstract}
We establish the existence of  global weak solutions of the 2D incompressible Euler equations, for a large class of non-smooth open sets. Losely, these open sets  are  the complements (in a simply connected domain)  of a finite number of obstacles with positive Sobolev  capacity. Existence of  weak solutions with $L^p$ vorticity is deduced from a property of domain continuity for the Euler equations, that relates to the so-called $\gamma$-convergence of open sets. Our results complete those  obtained  for convex domains in \cite{taylor}, or for domains with asymptotically small holes \cite{ift_lop_euler,milton}.
\end{abstract}

\tableofcontents

\section{Introduction}
Our concern in this paper is the existence theory for the 2D incompressible Euler flow: for $\Omega$ an open subset of  $\R^2$, we consider the equations
\begin{equation} \label{Euler}
\left\{
\begin{aligned}
 \pa_t u + u \cdot \na u + \na p & = 0, \quad  t > 0, x \in \Omega  \\
 \div u & = 0, \quad  t > 0, x \in \Omega 
\end{aligned}
\right.
\end{equation}
endowed with an initial condition and an impermeability condition at the boundary $\pa \Omega$: 
\begin{equation} \label{conditions}
u\vert_{t=0} = u^0, \quad u \cdot \nu \vert_{\pa \Omega} = 0,   
\end{equation}
where $\nu$ denotes the outward unit normal vector at $\pa \Omega$. Note that it is well-defined  only if $\Omega$ is smooth: later on, in the study of non-smooth open sets,  we shall introduce a weaker form of the impermeability condition.  As usual, $u(t,x)= (u_1(t,x_1,x_2), u_2(t,x_1,x_2))$ and $p = p(t,x_1,x_2)$
 denote the velocity and pressure fields, and the vorticity 
$$   \curl u \: := \: \pa_1 u_2 - \pa_2 u_1 $$   
plays a crucial role in their dynamics.
% The initial data $u^0$ is  chosen in such a way that 
%$ \omega^0 \:  := \:   \pa_1 u^0_2 - \pa_2 u^0_1$ belongs to   $L^\infty(\Omega)$, and is compactly supported in $\overline{\Omega}$.  

\medskip
The well-posedness of system \eqref{Euler}-\eqref{conditions}  has been of course the matter of many works, starting from the seminal paper of Wolibner for smooth data in bounded domains   \cite{wolibner}. For the case of smooth data in the whole plane, respectively in exterior domains, see \cite{MG}, resp. \cite{kiku}.   In the case where the vorticity is only assumed to be bounded, existence and uniqueness of a weak solution was established by Yudovich in \cite{yudo}. We quote that the well-posedness result of Yudovich applies to smooth  bounded domains, and to unbounded ones under further decay assumptions. Since the work of Yudovich, the theory of weak solutions has been considerably improved, accounting for vorticities that are only  in $L^1 \cap L^p$ (see the work of Di Perna and Majda \cite{DiPernaMajda}), or that are  positive Radon measures in $H^{-1}$ ({\it cf } the paper of Delort \cite{delort}). We refer to the textbook \cite{majda} for extensive discussion and bibliography.

\medskip
{ A common point in all above studies is that  $\pa \Omega$ is at least  $C^{1,1}$. }Roughly,  the reason is the following: due to the non-local character of the Euler equations, these works rely on   global in space estimates of $u$ in terms of  $\curl u$. These estimates  {\em up to the boundary} involve  kernels of the type Biot-Savart, corresponding to operators such as $\na \Delta^{-1}$.  Unfortunately,  such operators are known  to behave badly in general non-smooth domains.  This explains why well-posedness results are dedicated to regular domains, with a few exceptions. 
  
\medskip
Among those exceptions, one can mention the work \cite{taylor} of M. Taylor related to  convex domains $\Omega$.  Indeed, it is well known that if $\Omega$ is convex,  the solution $\psi$ of the  Dirichlet problem 
$$ \Delta \psi = f \: \mbox{ in } \: \Omega, \quad \psi\vert_{\pa \Omega} = 0 $$
belongs to $H^2(\Omega)$ when the  source term $f$ belongs to $L^2(\Omega)$, no matter the regularity of the domain. Pondering on this remark, Taylor was able to  prove in \cite{taylor} the {\em existence of global weak solutions  in  bounded convex domains}. Nevertheless, this interesting result still leaves aside many situations of practical interest, notably flows around irregular obstacles.

\medskip
The special case of a flow outside a curve has been partly studied in a recent paper by the second author: \cite{lac_euler}. That paper yields the existence of Yudovich-type solutions of the Euler equations in the exterior of a smooth Jordan arc. However, that work relies heavily on the Joukowski transform, and can not be extended easily to more general domains. 

{\em Our ambition in the present paper is to recover the  existence of weak solutions with $L^1 \cap L^p$ vorticity, for a large class of non-smooth open sets. To do so, we will establish a general property of domain continuity  for the Euler equations. }

\medskip
The first part of the paper is devoted to  bounded sets. These sets $\Omega$ are obtained by inserting  a finite  number of obstacles ${\cal C}^1,\dots, {\cal C}^k$ into a simply connected domain $\widetilde \Omega$. More precisely, they can be written as
 \begin{equation} \label{omegatype1}
\Omega\: := \: \widetilde \Omega \: \setminus \: \left( \bigcup_{i=1}^k {\cal C}^i\right), 
\quad k \in \N 
\end{equation}
with the following assumptions
\begin{description}
{ \item[(H1) (connectedness)] $\widetilde \Omega$ is a bounded simply connected domain, ${\cal C}^1, \dots, {\cal C}^k$ are disjoint connected compact subsets of  $\widetilde \Omega$. 
 \item[(H2) (capacity)] For all $i=1\dots k$, $\capa({\cal C}^i) > 0$, where $\capa$ denotes the Sobolev $H^1$ capacity. }
\end{description}

Reminders on  the notion of  capacity are provided in Appendix \ref{app_capacity}. In particular, our assumptions allow to handle  flows around obstacles of positive Lebesgue measure, as well as flows around  Jordan arcs or  curves. They do not cover the case of point obstacles, which have zero capacity.  Let us  insist that no regularity is assumed on $\Omega$: exotic geometries, such as the Koch snowflake, can be considered.

\medskip
Within this setting, it is possible to  establish  the existence of   global  weak solutions of the Euler equations with $L^p$ vorticity. More precisely, we consider initial data satisfying 
\begin{equation} \label{initialdatabounded}
u^0 \in L^2(\Omega), \quad  \curl u^0 \in L^p(\Omega), \quad \div u^0 = 0, \quad u^0 \cdot \nu \vert_{\pa \Omega} = 0,   
\end{equation}
for some  $p \in ]1,\infty]$. Note that, due to the irregularity of  $\Omega$, the condition  $u^0 \cdot \nu \vert_{\pa \Omega} = 0$ has to be understood in a weak sense:
\begin{equation}\label{imperm}
 \int_\Omega u^0 \cdot h = 0  \quad \text{for all } h \in G(\Omega):=\{w\in L^2(\Omega) \ : \ w=\nabla p, \ \text{ for some } p\in H^1_{\loc}(\Omega)\}.
\end{equation}
For any open set $\Omega \subset \R^2$,  such a condition on $u^0 \in L^2(\Omega)$ is equivalent to\begin{equation}\label{impermbis}
u^0 \in {\cal H}(\Omega) \quad \text{where}\quad {\cal H}(\Omega):=\text{completion of } \{ \varphi \in{\cal D}(\Omega)\ | \ \div \varphi =0 \} \text{ in the norm of } L^2.
\end{equation}
This equivalence can be found in \cite[Lemma III.2.1]{Galdi}. Moreover, after proving this lemma, Galdi remarks that if $\Omega$ is a regular bounded or exterior domain, and $u^0$ is a sufficient smooth function, then $u^0$ verifies \eqref{imperm} if and only if $\div u^0 = 0$ and $u^0 \cdot \nu \vert_{\pa \Omega} = 0$.

{ Let us stress that the set of initial data satisfying   \eqref{initialdatabounded}  is large:  we will show later that  for any function $\om^0 \in L^p(\Omega)$, there exists $u^0$ verifying \eqref{initialdatabounded} and $\curl u^0 = \om^0$.}

Similarly to \eqref{imperm}, the weak form of the  divergence free and tangency conditions on the Euler solution $u$  will read:  
\begin{equation} \label{imperm2}
\forall h \in {\cal D}\left([0,+\infty); G(\Omega)\right), \quad \int_{\R^+} \int_\Omega u  \cdot  h = 0.
\end{equation}

Finally, the weak form  of the momentum equation on $u$ will read:
\begin{equation} \label{Eulerweak}
\mbox{for all } \, \varphi \in {\cal D}\left([0, +\infty) \times \Omega\right) \mbox{ with } \div \varphi = 0, \quad  \int_0^{\infty} \int_\Omega \left( u \cdot \pd_t \varphi +   (u \otimes u) : \na \varphi \right)  = -\int_\Omega u^0 \cdot \varphi(0, \cdot).
\end{equation}
{ Our first  main theorem is }
\begin{theorem} \label{theorem1}
Assume that $\Omega$ is of type  \eqref{omegatype1}, with  \mbox{(\textrm{H}1)-(\textrm{H}2)}. Let $p \in (1,\infty]$ and $u^0$ as in \eqref{initialdatabounded}-\eqref{imperm}. Then there exists  
$$ u  \in L^\infty(\R^+; L^2(\Omega)), \quad \mbox{ with } \:  \curl u \in L^\infty(\R^+; L^p(\Omega))$$
which is a global weak solution of \eqref{Euler}-\eqref{conditions} in the sense of \eqref{imperm2} and \eqref{Eulerweak}.

 \end{theorem}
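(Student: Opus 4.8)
The plan is to prove existence by an approximation argument: we regularize the singular domain $\Omega$ by a sequence of smooth domains $\Omega_n$ on which the classical theory applies, solve the Euler equations there, and pass to the limit using the domain-continuity property announced in the abstract. Concretely, since the obstacles ${\cal C}^i$ have positive capacity, I would construct a decreasing sequence of smooth bounded domains $\Omega_n$ of type \eqref{omegatype1} with smooth obstacles ${\cal C}^i_n \supset {\cal C}^i$ shrinking down to ${\cal C}^i$, so that $\Omega_n \nearrow \Omega$ in the sense relevant to $\gamma$-convergence. The positivity of capacity is precisely what should guarantee that the harmonic/Biot--Savart structure on $\Omega_n$ converges to that on $\Omega$ (this is where the $\gamma$-convergence of open sets enters, and why point obstacles of zero capacity are excluded).

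Next I would project the initial datum. Using the decomposition of $L^2$ into ${\cal H}(\Omega)$ and $G(\Omega)$ from \eqref{impermbis}--\eqref{imperm}, I would build approximate initial data $u^0_n$ on $\Omega_n$ satisfying \eqref{initialdatabounded} with $\curl u^0_n$ bounded in $L^1\cap L^p$ uniformly in $n$, and $u^0_n \to u^0$ strongly in $L^2$ (extending by $0$ outside $\Omega_n$ so everything lives in a fixed ambient space). On each smooth $\Omega_n$ the Di Perna--Majda / Yudovich theory yields a global weak solution $u_n$ with $\curl u_n \in L^\infty(\R^+;L^p(\Omega_n))$. The key \emph{a priori} bounds are the conservation of the $L^p$ norm of vorticity along the flow and the energy bound $\|u_n(t)\|_{L^2} \le \|u^0_n\|_{L^2}$, both uniform in $n$; these give uniform control of $u_n$ in $L^\infty(\R^+;L^2)$ and of $\curl u_n$ in $L^\infty(\R^+;L^p)$.

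I would then extract weak-$*$ limits $u_n \rightharpoonup u$ and pass to the limit in the weak formulations \eqref{imperm2} and \eqref{Eulerweak}. The linear terms are straightforward from weak convergence, so the crux is the nonlinear term $\int\!\int (u_n\otimes u_n):\na\varphi$, which requires strong $L^2_{\loc}$ compactness of $u_n$ in space-time. This is the heart of the matter and the place where the domain continuity must be exploited: for fixed $t$, the Biot--Savart operator on $\Omega_n$ maps the bounded vorticities to an equicontinuous (in the $\gamma$-convergence sense) family of velocities, giving spatial compactness, while an Aubin--Lions-type argument using the velocity equation controls the time modulus of continuity. Verifying that the limit $u$ genuinely satisfies the weak tangency condition \eqref{imperm2} on the \emph{singular} set $\Omega$, i.e. that the approximate impermeability on $\partial\Omega_n$ converges to membership in ${\cal H}(\Omega)$, is the subtle point where positive capacity of the obstacles is indispensable.

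The main obstacle I anticipate is precisely this passage to the limit of the Biot--Savart / stream-function structure across the degenerating boundary: one must show that the solutions of the Dirichlet problems on $\Omega_n$ converge to the corresponding solution on $\Omega$, uniformly enough to yield the required strong compactness of the velocities, and that no velocity ``leaks'' through the vanishingly thin obstacles in the limit. I expect this to rely essentially on the capacitary lower bound (H2) together with the characterization of $\gamma$-convergence in terms of convergence of the associated resolvents/Green's functions, so the bulk of the genuine work lies in establishing the requisite domain-continuity statement rather than in the Euler dynamics themselves.
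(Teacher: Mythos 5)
Your overall strategy --- smooth approximate domains, uniform energy and vorticity bounds, strong $L^2_{\loc}$ space-time compactness for the nonlinear term, passage to the limit in \eqref{imperm2} and \eqref{Eulerweak}, with $\gamma$-convergence and positive capacity doing the work at the boundary --- is indeed the paper's. But two concrete steps of your plan do not go through as stated.

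First, the construction of the approximating domains. You take smooth obstacles ${\cal C}^i_n \supset {\cal C}^i$ decreasing to ${\cal C}^i$, so that $\Omega_n \nearrow \Omega$ and $\gamma$-convergence comes for free from monotonicity. A general connected compact set of positive capacity is \emph{not} the decreasing intersection of closures of smooth Jordan domains: if ${\cal C}^i$ is a Jordan curve, every simply connected compact neighborhood of it contains the closed region it bounds, so the decreasing intersection is the filled-in disk and the limit of your $\Omega_n$ misses the component of $\Omega$ inside the curve. The paper makes exactly this point (the discussion around Figure 2) and therefore renounces monotonicity: the $\Omega_n$ converge only in the Hausdorff sense, and $\gamma$-convergence is recovered from Sver\'ak's theorem (Proposition \ref{prop9}), which uses that the complement of $\Omega_n$ has a uniformly bounded number of connected components. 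As written, your argument proves the theorem only for obstacles realizable as decreasing limits of smooth simply connected compacta, which is strictly less than (H1)--(H2).

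Second, you never address the harmonic part of the velocity, and this hides a circular step. Since $\Omega$ is multiply connected, $u_n$ is not determined by $\omega_n$ and the tangency condition; it carries circulations around each obstacle, $u_n = \nabla^\perp\psi^0_n + \sum_i \alpha^i_n(t)\,\nabla^\perp\psi^i_n$. Your key bound $\|u_n(t)\|_{L^2} \le \|u^0_n\|_{L^2}$ ``uniform in $n$'' presupposes that $\|u^0_n\|_{L^2}$ is uniformly bounded, and this does \emph{not} follow from the $L^1\cap L^p$ bounds on $\curl u^0_n$: one must bound the harmonic fields $\nabla^\perp\psi^i_n$ uniformly in $L^2$, and this is precisely where (H2) enters in the paper, through the invertibility of the Gram matrix $P=\bigl(\int \nabla\phi^i\cdot\nabla\phi^j\bigr)$, which degenerates exactly when an obstacle has zero capacity. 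Likewise the strong $L^2$ convergence $u^0_n\to u^0$, which you take as an input, is only obtained in the paper at the very end, from uniqueness of the Hodge decomposition with prescribed weak circulations $\gamma^i(u^0)$. So the place you assign to the capacity hypothesis (preventing ``leakage'' through thin obstacles, tangency in the limit) is not where it is actually indispensable; without the circulation bookkeeping the uniform $L^\infty L^2$ bound, and hence the entire compactness argument, is unjustified.
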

{ In a few words, our existence theorem will  follow from a property of domain continuity for the Euler equations. Namely, we will show that  smooth solutions $u_n$  of the Euler equations in smooth approximate  domains $\Omega_n$ converge to a solution $u$ in $\Omega$. By {\em approximate domains}, we  mean converging to $\Omega$ in the Hausdorff topology. These approximate domains, to be built in Section \ref{section2}, read
$$\Omega_n \: := \:  \widetilde \Omega_n \: \setminus \: \left( \bigcup_{i=1}^k \overline{O_n^i}\right)$$ 
for some smooth Jordan domains $\widetilde \Omega_n$ and $O^i_n$.  A keypoint  is the so-called $\gamma$-convergence of $\Omega_n$ to $\Omega$.  All necessary prerequisites on Hausdorff, resp.   $\gamma$-convergence will be given in Appendix  \ref{app_hausdorff}, resp. Appendix \ref{app_gammaconv}. The compactness argument will be given in Section  \ref{section2} ($p=\infty$) and Section \ref{section5} (finite $p$).}  
Further discussion of domain continuity for the Euler equations is provided in Section \ref{section6}. Possible extension of Theorem \ref{theorem1} to weaker settings (Delort's solutions) is also discussed there. 

\bigskip
In the second part of the paper, we consider general exterior domains $\Omega$. We assume that $\OM$ is the exterior of a bounded obstacle with positive capacity. It reads 
\begin{equation} \label{omegatype2}
 \Omega \: := \: \R^2 \setminus {\cal C}
 \end{equation}
with
\begin{description} 
\item[(H1') (connectedness)] ${\cal C}$ is a connected compact set.
\item[(H2') (capacity)] $\capa({\cal C})  > 0$.
\end{description}

\medskip
Let us point out that to work with square integrable velocities in exterior domains is too restrictive. Therefore, we relax the  condition \eqref{initialdatabounded} on the initial data into
\begin{equation} \label{initialdataunbounded}
u^0 \in L^2_{\loc}(\overline{\Omega}), \quad u^0 \rightarrow 0 \:  \mbox{ as } \: |x| \rightarrow +\infty,  \quad  \curl u^0 \in L^p(\Omega), \quad \div u^0 = 0, \quad u^0 \cdot \nu \vert_{\pa \Omega} = 0,
\end{equation}
where the divergence free and tangency conditions read:
\begin{equation}\label{impermub}
 \int_\Omega u^0 \cdot h = 0  \quad \text{for all } h \in G_{c}(\Omega):=\{w\in L^2_{c}(\overline{\Omega}) \ : \ w=\nabla p, \ \text{ for some } p\in H^1_{\loc}(\Omega)\}.
\end{equation}
Even if we consider only test functions with bounded supports, this condition is still equivalent to the standard impermeability condition when $\Omega$ is regular: if $u^0$ is a sufficiently smooth function, then $u^0$ verifies \eqref{impermub} if and only if $\div u^0 = 0$ and $u^0 \cdot \nu \vert_{\pa \Omega} = 0$.

Similarly to \eqref{imperm2}, the weak form of the  divergence free and tangency conditions on the Euler solution $u$  will read:  
\begin{equation} \label{impermub2}
\forall h \in {\cal D}\left([0,+\infty); G_{c}(\Omega)\right), \quad \int_{\R^+} \int_\Omega u  \cdot  h = 0.
\end{equation}

We make the additional assumption  that 
\begin{equation} \label{initialdataunbounded2} 
 \mbox{ $\curl u^0$ is supported in a compact subset of $\R^2$}
 \end{equation}
 which is classical in this context. We prove in Sections \ref{section3} and \ref{section5} the following result: 
  
\begin{theorem} \label{theorem2}
Assume that $\Omega$ is of type   \eqref{omegatype2}, with (H1')-(H2'). Let $p\in (2,\infty]$ and  $u^0$ satisfying  \eqref{initialdataunbounded}-\eqref{impermub} and \eqref{initialdataunbounded2}. Then, there exists
 $$u \in L^\infty_{\loc}(\R^+; L^2_{\loc} (\overline{\Omega})), \: \mbox{ with } \: \curl u \in   L^\infty(\R^+; L^1\cap L^p (\Omega))$$
which is a global weak solution of \eqref{Euler}-\eqref{conditions} in the sense of \eqref{impermub2} and \eqref{Eulerweak}.
\end{theorem}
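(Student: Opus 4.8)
The plan is to follow the same domain-continuity strategy as for Theorem \ref{theorem1}, now adapted to the exterior setting. First I would approximate the obstacle ${\cal C}$ from outside by a decreasing sequence of smooth Jordan domains $O_n$ with ${\cal C} \subset O_n$ and $O_n \to {\cal C}$ in the Hausdorff sense, and set $\Omega_n := \R^2 \setminus \overline{O_n}$. Each $\Omega_n$ is then a smooth exterior domain converging to $\Omega$ in the Hausdorff topology, and assumption (H2') (positive capacity) guarantees --- exactly as in the bounded case --- the $\gamma$-convergence of $\Omega_n$ to $\Omega$. Note that the use of $L^2_{\loc}(\overline\Omega)$ rather than $L^2(\Omega)$ is forced here: the harmonic field carrying the circulation behaves like $\nabla^\perp \log|x|$ at infinity and is only locally square integrable in two dimensions.

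Next, on each $\Omega_n$ I would construct an approximate initial velocity $u_n^0$. Since the vorticity $\om^0 := \curl u^0$ is compactly supported by \eqref{initialdataunbounded2}, for $n$ large its support lies in $\Omega_n$, and the exterior Biot--Savart law recovers a divergence-free field, tangent to $\pa \Omega_n$, with prescribed vorticity $\om^0$ and prescribed circulation around $O_n$ (matching the circulation of $u^0$ around ${\cal C}$). Here the restriction $p > 2$ is essential: for $\om^0 \in L^1 \cap L^p$ with $p > 2$, the kernel $K \sim 1/|x|$ satisfies $K \in L^{p'}_{\loc}$ with $p' < 2$, so the convolution yields a velocity that is bounded near the obstacle and decays at infinity, which is precisely what is needed to run the classical theory and to extract uniform bounds. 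I would then check that $u_n^0 \to u^0$ in $L^2_{\loc}(\overline\Omega)$.

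In each smooth exterior domain $\Omega_n$ I would invoke the classical global existence theory (Kikuchi \cite{kiku}, or Yudovich-type results for smooth exterior domains) to obtain a solution $u_n$ whose vorticity is transported by the flow, so that $\|\curl u_n(t,\cdot)\|_{L^1 \cap L^p}$ is conserved in $t$ and equals $\|\om^0\|_{L^1 \cap L^p}$. Combined with the Biot--Savart representation, this gives uniform-in-$n$ bounds on $u_n$ in $L^\infty_{\loc}(\R^+; L^2_{\loc}(\overline\Omega))$ and on $\curl u_n$ in $L^\infty(\R^+; L^1 \cap L^p)$. Controlling $\pa_t u_n$ in a negative-order space via the equation, an Aubin--Lions argument then yields strong $L^2_{\loc}$ compactness in space-time of a subsequence of $u_n$, enough to pass to the limit in the quadratic term $(u_n \otimes u_n):\na\varphi$.

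The main obstacle, as in Theorem \ref{theorem1}, is the passage to the limit in the domain: preserving the weak tangency condition \eqref{impermub2} and ensuring that the limit velocity is generated by its vorticity in $\Omega$, with no spurious flux through ${\cal C}$. This is exactly what $\gamma$-convergence is designed to control, since it guarantees that the gradient fields $h \in G_c(\Omega_n)$ and the associated stream functions converge suitably, so that the orthogonality $\int u_n \cdot h = 0$ survives in the limit. The specifically exterior difficulties --- uniform control at spatial infinity and the correct handling of the circulation, which is not determined by the vorticity alone --- require care, but are manageable given the compact support of $\om^0$ and the decay of the Biot--Savart field when $p > 2$.
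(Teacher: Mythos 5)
Your overall architecture (smooth exterior approximations $\Omega_n$, regularized data, uniform bounds, $\gamma$-convergence to preserve the boundary condition, Aubin--Lions for the nonlinear term) matches the paper's, but the central analytic step is asserted rather than proved, and it is precisely the step that does not follow from what you invoke. You claim that the Biot--Savart representation together with conservation of $\|\omega_n\|_{L^1\cap L^p}$ yields uniform-in-$n$ bounds on $u_n$ in $L^\infty_{\loc}(\R^+;L^2_{\loc}(\overline\Omega))$. The exterior Biot--Savart kernel is expressed through the conformal maps $\mathcal{T}_n:\Omega_n\to\{|z|>1\}$, and $D\mathcal{T}_n$ is \emph{not} uniformly bounded up to a rough boundary (the paper points this out explicitly when estimating the kernel); so the representation formula alone gives no uniform $L^2$ control near ${\cal C}$. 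The actual route is an energy estimate for the Dirichlet problem \eqref{psi0-ext} combined with a Poincar\'e inequality on $\Omega_n\cap B(0,\rho)$ with a constant independent of $n$ (Lemma \ref{poincare}). This is where $\capa({\cal C})>0$ genuinely enters --- via a compactness/contradiction argument and Proposition \ref{prop11} --- and not, as you state, in establishing $\gamma$-convergence, which instead follows from Sver\'ak's theorem (Proposition \ref{prop9}, uniform bound on the number of connected components of the complement).

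Moreover, that Poincar\'e inequality is applied to $\|\psi^0_n\|_{L^2(\supp\omega_n(t,\cdot))}$, so one must first prove that the support of the transported vorticity stays in a ball $B(0,\rho_0+Ct)$ with $C$ independent of $n$. This is a genuine circularity to break: it requires a uniform $L^\infty$ bound on $u_n$ away from the obstacle, which the paper extracts from the Biot--Savart formula \emph{in the region where $D\mathcal{T}_n$ is controlled}, using the Caratheodory convergence of the biholomorphisms (Proposition \ref{propconvbiholo}, Lemmas \ref{est kernel} and \ref{est harm}) and the interpolation estimate $|\mathcal{I}_2|\le C\|g\|_{L^1}^{\alpha_p}\|g\|_{L^p}^{1-\alpha_p}$ that forces $p>2$. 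Your remark that the difficulties at infinity are ``manageable given the compact support of $\om^0$'' begs the question, since the support of $\omega_n(t,\cdot)$ at positive times is exactly the quantity to be controlled. Finally, a smaller but real point: the tangency condition in the limit is not obtained by testing against $G_c(\Omega_n)$ directly, but by showing via Proposition \ref{prop10} that the cut-off stream function $\chi\psi^0(t,\cdot)$ lies in $H^1_0(\Omega\cap B(0,\rho_T))$, which is also what makes the integration by parts (and hence the strong convergence of $\nabla\psi^0_n$) legitimate.
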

Again, the weak solution $u$ is obtained from the compactness of  a sequence of smooth solutions $u_n$ in the approximate  domains $\Omega_n \: := \: \R^2 \setminus \overline{O_n}$. The special case $p=\infty$ will be treated with full details in Section \ref{section3}. The extension to finite $p$ will be sketched in Section \ref{section5}.

\medskip 
Note that our  analysis improves the recent results obtained by the second author  for the exterior of a Jordan arc.   We improve both  the result  in \cite{lac_euler} (existence of solution outside a Jordan arc) and \cite{lac_small} (continuity property for a special class of approximating obstacles $\overline{O_n}$):  we treat more shapes  than just $C^2$ Jordan arcs, and  our convergence  of $\Omega_n$ to $\Omega$  is  expressed through the  Hausdorff distance, which is more general and simple than the conditions in \cite{lac_small}. Therein, one needs stringent convergence properties of the biholomorphisms  that map  $\Omega_n$ to the set $\{|z| > 1\}$. In particular, to obtain the uniform convergence of the first derivatives requires the convergence of the tangent angles of $\pd O_n$.  We refer to \cite{lac_small} for detailed statements. 

We point out  that the limit dynamics in Theorem \ref{theorem2} is expressed differently than in \cite{lac_euler}. Indeed, in this article, extensions $\tilde u_n $ of $u_n$ to the whole plane are considered, resulting in a modified Euler system in the whole plane at the limit. This system is expressed in vorticity form, and reads
\begin{equation} \label{vorticityformulation} \pa_t \omega + u \cdot \na  \omega = 0, \quad \omega \: := \:  \curl u \:  - \:  g_\omega \delta_{\cal C}, \quad t > 0, \:  x \in \R^2, 
\end{equation}
with an additional Dirac mass along the curve. The equivalence between \eqref{vorticityformulation} and the standard formulation \eqref{Eulerweak} of our theorem will be discussed in Section \ref{section6}. In particular, it is proved in \cite{lac_euler} that the velocity blows up near the end-points like the inverse of the square root of the distance, which belongs to $L^p_{\loc}$ for $p<4$. Here, we will obtain some uniform estimates of  the velocity in $L^2_{\loc}$ (see \eqref{na psi}-\eqref{L2 harm}) which are in agreement with the former estimates.

The general idea here is to get uniform  $L^2$ estimates for the velocity (i.e. $H^1$ for the Laplace problem). Combined with $L^p$ bounds on the vorticity,  they will allow us  to establish the existence of weak solutions. As we will show, such  uniform  local estimates do not require any assumption on the regularity of the boundary, but they require that the obstacles have a non zero capacity, which means that we cannot treat the material points. Indeed, in the case of an obstacle which shrinks to a point $P$, Iftimie, Lopes-Filho and Nussenzveig-Lopes in \cite{ift_lop_euler} (one obstacle in $\R^2$) and Lopes-Filho in \cite{milton} (several obstacles in a bounded domain) proved that one has at the limit a term like $x^\perp/(2\pi |x|^2)$ centered at the point $P$, which is not square integrable. Therefore, the assumption of positive capacity appears to be necessary for a $L^2$ approach, and the goal of this article is to prove that it is sufficient to establish the existence. Note that the connection between $H^1$ capacity and the detection of  obstacles is well-known in elliptic theory, notably with regards to the Signorini problem. 

Let us finally insist that even for Yudovich type solutions ($p=\infty$), we  only  deal with global existence, not  uniqueness. The classical proof of uniqueness  requires accurate velocity estimates  in $W^{1,p}(\OM)$ for any $p<\infty$, whereas we only need $L^2$ estimates for the existence theory. In general domains, the Calderon-Zygmund inequality does not hold and the second author considers in \cite{lac_uni} domains with some corners, for which the velocity is proved not to belong to $W^{1,p}$ for all $p$ (precisely, if there is a corner of angle $\alpha>\pi$, then the velocity is no longer bounded in $L^p\cap W^{1,q}$, $p>p_\alpha$, $q>q_\alpha$ with $p_\alpha\to 4$ and $q_\alpha\to 4/3$ as $\alpha\to 2\pi$). Therefore, proving uniqueness seems challenging:  it is only obtained for  compactly supported initial vorticity, with definite sign, when $\OM$ is the interior or the exterior of a simply connected set (see \cite{lac_uni} for details).

\medskip
{\em N.B.} Until the end of the paper, the word {\em domain} will refer to a {\em connected open set}.

\section{Theorem \ref{theorem1} for $p=\infty$} \label{section2}

This section is devoted to the proof of Theorem \ref{theorem1} (bounded domains), in the special case $p=\infty$, that is starting from data with bounded vorticity. The baseline of the proof is 
\begin{itemize}
\item To construct smooth approximations $\Omega_n$ of $\Omega$ (in Hausdorff topology), and smooth approximations $u^0_n$ of  the initial data $u^0$. These smooth data will generate  global smooth solutions $u_n$ of the Euler equation.
\item To obtain uniform bounds on $u_n$, that  provide a converging subsequence. 
\item To show that the limit $u$ of this subsequence satisfies the Euler system in $\Omega$ with initial data $u^0$. The main point is to check that the tangency condition, and the  nonlinear equation on momentum are preserved in the limit $n \rightarrow +\infty$.  
\end{itemize}
However, the reasoning turns out to be not so linear. For instance, the smoothing  of the initial data is not obvious. Standard approximation $u^0_n$   by truncation and convolution of $u^0$ does not work. Indeed, we want to preserve the bounds of $u^0$, that is we want  $(u^0_n)$ to be uniformly bounded in $L^2$, with $\curl u^0_n$  uniformly bounded in $L^\infty$. Consequently, no approximation $u^0_n$ with compact support is allowed: by the uniform bounds just mentioned, the limit would belong to $H^1_0(\Omega)$, which is not convenient to deal with any $u^0$ tangent  to the boundary. The right thing to do is to approximate the initial vorticity $\omega^0$ by some smooth compactly supported $\omega^0_n$, and  to reconstruct an initial velocity $u^0_n$ tangent to $\pa \Omega_n$ through Hodge-De Rham theorem.   
The construction of $\Omega_n$ and $u^0_n$ is detailed in paragraph \ref{subsectionapprox}. 

Nevertheless, uniform bounds on $u^0_n$ can not be established rightaway. They are derived simultaneously to those on $u_n$: roughly, one can say that uniform bounds for $t=0$ and $t > 0$ are equally hard to obtain. They rely on a Hodge decomposition of the velocity field, into its so-called harmonic and rotational parts. Uniform bounds on the harmonic part are established in paragraph \ref{subsectionharm}, those on the rotational part are contained in paragraph  \ref{subsecrot}. Convergence of $u^0_n$ to $u^0$ and continuity of the tangency condition and of the Euler momentum equation follow in paragraph \ref{subsecconclu}. A key ingredient in these paragraphs is the $\gamma$-convergence of $\Omega_n$ to $\Omega$, related to the  domain continuity of the inverse Dirichlet Laplacian. Laplace equations are naturally involved through the use of streamfunctions.

\subsection{Regularization of the data} \label{subsectionapprox} \

 Our starting point is the approximation of $\Omega$ by  smooth domains $\Omega_n$.   We state 
\begin{proposition}  \label{approxlemma}
 Let $\Omega$ of type \eqref{omegatype1}, satisfying (H1). Then, $\Omega$ is the Hausdorff limit of a sequence 
 $$\Omega_n \: := \:  \widetilde \Omega_n \: \setminus \: \left( \bigcup_{i=1}^k \overline{O_n^i}\right),$$
where  $\widetilde \Omega_n$ and   the $O_n^i$'s  are smooth Jordan domains, such that  $\widetilde \Omega_n$, resp.   $\overline{O_n^i}$,   converges  in the Hausdorff sense to  $\widetilde \Omega$, resp.   ${\cal C}^i$. 
\end{proposition}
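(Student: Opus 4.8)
The plan is to approximate the outer domain $\widetilde{\Omega}$ and each hole $\mathcal{C}^i$ separately, which is legitimate because the $\mathcal{C}^i$ are pairwise disjoint compact subsets of the \emph{open} set $\widetilde{\Omega}$: for $n$ large their approximations will be pairwise disjoint and compactly contained in the approximation of $\widetilde{\Omega}$. Fixing a closed ball $B \supset \overline{\widetilde{\Omega}}$, I would read the Hausdorff convergence $\Omega_n \to \Omega$ on the complements $B \setminus \Omega_n = (B\setminus\widetilde{\Omega}_n) \cup \bigcup_i \overline{O_n^i}$; since the Hausdorff limit of a finite union is the union of the Hausdorff limits (Appendix \ref{app_hausdorff}), it suffices to produce smooth Jordan domains $\widetilde{\Omega}_n$ with $B\setminus\widetilde{\Omega}_n \to B\setminus\widetilde{\Omega}$ and smooth Jordan domains $O_n^i$ with $\overline{O_n^i}\to\mathcal{C}^i$, each in the Hausdorff distance.

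For the outer domain I would use conformal mapping. Since $\widetilde{\Omega}$ is a bounded simply connected domain, the Riemann mapping theorem provides a biholomorphism $F \colon \mathbb{D} \to \widetilde{\Omega}$ from the unit disk. Setting $\widetilde{\Omega}_n := F(r_n\mathbb{D})$ with $r_n \uparrow 1$ gives an increasing sequence of analytic (hence smooth) Jordan domains whose union is $\widetilde{\Omega}$. Then $K_n := B\setminus\widetilde{\Omega}_n$ is a \emph{decreasing} sequence of compact sets with $\bigcap_n K_n = B\setminus\widetilde{\Omega}$, and a decreasing sequence of compacts always converges to its intersection in the Hausdorff distance; this yields $\widetilde{\Omega}_n\to\widetilde{\Omega}$. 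For $n$ large, $\widetilde{\Omega}_n$ contains all the $\mathcal{C}^i$, so $\Omega_n$ has the stated form.

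The core of the proof is the approximation of a single connected compact set $K := \mathcal{C}^i$ by smooth Jordan domains, and this is where the main obstacle lies: the approximants must be simply connected, yet they must converge to $K$ \emph{exactly}, respecting any bounded complementary components (``holes'') of $K$ — e.g. if $K$ is a circle we must recover the inner disk as part of the limiting complement, so we cannot simply fill $K$ in. I would proceed by a discretize--surgery--smooth scheme. First, let $S_n$ be the union of the closed squares of a grid of mesh $h_n\to 0$ that meet $K$; this is a finite union of squares, it is connected because $K$ is connected (two groups of squares sharing no point would split $K$ into two disjoint relatively closed pieces), and $d_H(S_n,K)\le \sqrt{2}\, h_n$. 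Being a polygon, $S_n$ has only finitely many holes. For each hole I would remove a thin polygonal corridor of width $\to 0$ joining it to the unbounded complementary component, the corridors being taken pairwise disjoint; the resulting set $S_n'$ is then connected and simply connected. The point making the surgery harmless is that such a thin corridor may well cut through $K$, but it only deletes a set of small diameter, so the bound $d_H(S_n',K)\to 0$ is preserved. Finally I would round the corners of the polygonal Jordan curve $\partial S_n'$ by an arbitrarily small perturbation to obtain a smooth Jordan domain $O_n^i$ with $\overline{O_n^i}\to K$ in the Hausdorff distance.

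Assembling these pieces, $B\setminus\Omega_n$ converges in the Hausdorff distance to $(B\setminus\widetilde{\Omega})\cup\bigcup_i\mathcal{C}^i = B\setminus\Omega$, which is the desired conclusion, and the limiting $\Omega_n$ is a smooth multiply connected domain (a Jordan domain with $k$ smooth holes). I expect the only genuinely delicate step to be the surgery producing simple connectivity, together with the verification that routing thin slits through $K$ does not spoil the two-sided Hausdorff estimate; by contrast the conformal step for $\widetilde{\Omega}$ and the grid discretization are routine.
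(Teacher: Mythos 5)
Your proof is correct and follows essentially the same route as the paper: a Riemann-mapping exhaustion $F(r_n\mathbb D)$ for the simply connected outer domain, and for each obstacle a polygonal (grid-square) approximation whose holes are killed by removing thin slits joining them to the unbounded complementary component, followed by smoothing. The only cosmetic differences are that you cut all corridors at once where the paper reduces the number of complementary components one at a time, and you round corners directly where the paper reuses the Riemann-mapping step to smooth the resulting simply connected polygon; the delicate points you flag (connectedness after the surgery, the two-sided Hausdorff estimate when a slit crosses $K$) are exactly the ones the paper also leaves to the reader.
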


\begin{proof} Let $U$ be  a bounded simply connected domain. By the Riemann mapping theorem, there exists a unique biholomorphism   $\displaystyle \mathcal{T} : \{ |z| < 1 \} \: \mapsto \:  \widetilde \Omega$ satisfying $\mathcal{T}'(0) > 0$. The sets 
$\displaystyle U_n \: := \:  \mathcal{T}(\{ |z| < 1-1/n \})$  are  smooth Jordan domains, with $(U_n)$, resp. $\overline{U_n}$ converging respectively to $U$  and $\overline{U}$. In particular, applying this argument with $U = \tilde \Omega$ yields the sequence $(\widetilde \Omega_n)$ from the lemma. Also,  if an obstacle ${\cal C}^i$  is the  closure $\overline{O^i}$  of a bounded simply connected domain, the same process provides an approximation of ${\cal C}^i$ by the closure of a smooth Jordan domain $O^i_n$.  

To conclude the proof, it remains to show that any connected compact set ${\cal C}$ can be approximated (in arbitrary short Hausdorff distance) by the closure of  a bounded simply connected domain. Clearly, ${\cal C}$ can be approximated by the closure of a polygonal domain $P$, for instance a non-disjoint and finite union of open squares (the union is not disjoint thanks to the connectedness of ${\cal C}$). Then, the whole point is  to approximate  $P$ by a polygonal domain $P'$ satisfying
$$ \mbox{ number of connected components of } P'^c  \:  = \:    (\mbox{number of connected components of } \: P^c) \: - \: 1.  $$
Indeed, reiteration of such approximation will yield  after a finite number of steps an approximation of $P$ by a simply connected domain. 

To build $P'$, the idea is to draw a polygonal line $L$ connecting the unbounded component of $P^c$ to its nearest bounded connected component.    
One can check that removing such line to $P$ does not break its connectedness. Then, one can thicken the line, that is consider its $\eps$-neighborhood $L^\eps$ and take $P' = P \setminus L^\eps$, which in the limit $\eps \rightarrow 0$ provides an approximation of $P$ arbitrary close in Hausdorff topology.  For the sake of brevity, we leave the details to the reader. 
\end{proof}

\medskip
After smoothing of $\Omega$ into $\Omega_n$, we  need to  smoothen the initial data $u^0$,  to  generate eventually some strong Euler solution in $\Omega_n$. We proceed as follows. Let $\omega^0 := \curl u^0$. By truncation and convolution, there exists some sequence $\omega^0_n \in C^\infty_c(\Omega)$ such that 
$$ \omega^0_n \rightarrow \omega^0 \quad   \mbox{ strongly in }  L^q(\Omega), \quad \forall q \in [1,\infty) $$
and
$$  \| \omega^0_n \|_{L^q} \le  \| \omega^0 \|_{L^q}, \quad \forall q \in [1,\infty]. $$
As $(\Omega_n)_{n \in \N}$ converges to $\Omega$ in the Hausdorff sense, it follows from Proposition \ref{prop4} that $\omega^0_n \in C^\infty_c(\Omega_{N_n})$  for $N_{n}$ large enough. Hence, up to extract a subsequence from $(\Omega_n)_{n \in \N}$ one can assume
$$ \omega^0_n \in  C^\infty_c(\Omega_{n}), \quad \forall n \in \N. $$
To uniquely determine a velocity field $u^0_n$ from $\omega^0_n$, we still need to specify the circulation around each obstacle, in a weak sense. 

First, we introduce some cutoff functions.
For all $i=1\dots k$ and $\eps > 0$, let ${\cal C}^{i,\eps} \: := \:  \{x, \: d(x,{\cal C}^i) \le \eps\}$ the $\eps$-neighborhood of ${\cal C}^i$. Let $\chi^{i,\eps} \in C^\infty_c(\R^2)$  smooth functions satisfying 
 \begin{equation}\label{cutoff}
\chi^{i,\eps} \equiv  1 \: \mbox{ on ${\cal C}^{i,\eps}$}, \quad \chi^{i,\eps} \equiv  0 \: \mbox{ on $\R^2 \setminus  {\cal C}^{i,2\eps}$}.
\end{equation}
By Assumptions (H1)-(H2), there exists $\eps > 0$ and $n_0 = n_0(\eps)$ such that  
$$ \chi^{i,\eps} \equiv  1 \: \mbox{ on } \: \overline{O^i_n}, \quad \chi^{i,\eps} \equiv  0 \: \mbox{ on } \: \overline{O^j_n}, \quad  j \neq i, \quad \chi^{i,\eps} \equiv  0 \: \mbox{ on } \:  \pa \widetilde \Omega_n, \quad \mbox{ for all } \: n \ge n_0.  $$
For brevity, we drop the upperscript $\eps$.

Then, we define the weak circulation of $u^0$ around ${\cal C}^j$
\begin{equation}\label{weak circulation}
\g^j=\g^j (u^0) := -\int_{\OM} \chi^j\curl u^0  -\int_{\OM} u^0 \cdot \na^\perp \chi^j.
\end{equation}

By standard results  related to the Hodge-De Rham theorem, there exists a unique
field $u^0_n \in C^\infty_c(\overline{\Omega_n})$   satisfying 
$$ \curl u^0_n = \omega^0_n, \quad \div u^0_n = 0, \quad u^0_n \cdot \nu \vert_{\pa \Omega_n} = 0, \quad \int_{\pd O^i_n} u^0_n \cdot \tau ds  = \g^i, $$
where $\tau$ denotes the unit tangent vector rotating counterclockwise.

\medskip
We take $(u^0_n)$ as our sequence of initial data. We shall postpone   the convergence of $u^0_n$ to $u^0$ to the end of the section. We consider  for all $n$  the unique smooth solution $u_n$ of the Euler equations in $\Omega_n$, such that 
$$ u_n \cdot \nu  \vert_{\pa \Omega_n} = 0, \quad u_n\vert_{t=0} = u^0_n. $$
Again,   from classical results related to the Hodge-De Rham theorem, the divergence-free smooth fields $u_n$ satisfy in  $\Omega_n$ 
\begin{equation} \label{hodge}
 u_n(t,x) \: = \: \na^\bot \psi^0_n(t,x) \: + \: \sum_{i=1}^k \alpha^i_n(t) \na^\bot \psi^i_n(x)
\end{equation}
where $\psi^0_n$ satisfies the Dirichlet problem 
\begin{equation} \label{psi0}
\Delta \psi^0_n \: = \: \omega_n \: :=  \: \curl u_n \: \mbox{ in } \: \Omega_n, \quad \psi^0_n\vert_{\pa \Omega_n} = 0  
\end{equation}
whereas $\psi^i_n$, $i=1\dots k$ are harmonic functions satisfying 
\begin{equation} \label{psii}
\Delta \psi^i_n \: = \: 0 \: \mbox{ in } \: \Omega_n, \quad \frac{\pa \psi^i_n}{\pa \tau} \vert_{\pa \Omega_n} \: = \: 0, \quad \int_{\pa O^j_n} \frac{\pa \psi^i_n}{\pa \nu } = -\delta_{ij}, \quad  \psi^i_n\vert_{\pd \widetilde \OM_n} = 0, 
\end{equation} 
where $\delta_{ij}$ is the Kronecker symbol  and $\nu$ denotes the unit vector pointing outside $\Omega_n$. Note that $\alpha^i_n$, $i=1 \dots k$ only depends on time (the formula will be given in Proposition \ref{prop : alpha}).

We refer to \cite{kiku} and \cite{milton} for all details. The key point  in proving Theorem \ref{theorem1} is to obtain some compactness on $u_n$ through the study of the $\psi^i_n$'s.

\subsection{Study of the harmonic part}\label{subsectionharm} \

We first focus on the harmonic part of $u_n$, that is the sum at the r.h.s. of \eqref{hodge}. Let us  introduce the auxiliary harmonic functions $\phi^i_n$, $\: i=1\dots k$, that satisfy 
\begin{equation} \label{phii}
\Delta \phi^i_n = 0, \quad \phi^i_n\vert_{\pa \widetilde \Omega_n} = 0, \quad \phi^i_n\vert_{\pa O^j_n} = \delta_{ij}, \quad j=1\dots k. 
\end{equation} 

\medskip
Using the cutoff function introduced in \eqref{cutoff}, we notice that the function $\Phi^i_n \: := \: \phi^i_n - \chi^i$ satisfies 
$$ \Delta \Phi^i_n = - \D \chi^i \quad  \mbox{ in } \: \Omega_n, \quad \Phi^i_n\vert_{\pa \Omega_n} = 0. $$
Let $D$ some big open ball containing all the $\Omega_n$'s.  We can use Proposition \ref{prop9}: as $(\Omega_n)_{n \in \N}$ converges to $\Omega$ in the Hausdorff sense and the complement in $D$ of $\Omega_n$ has at most $k+1$ connected components for all $n$, $(\Omega_n)_{n \in \N}$ $\gamma$-converges to $\Omega$.  We deduce that $\Phi^i_n$ converges in $H^1_0(D)$ to the solution $\Phi^i \in  H^1_0(\Omega)$ of
$$ \Delta \Phi^i = -  \D \chi^i \quad \mbox{ in } \: \Omega, \quad \Phi^i\vert_{\pa \Omega} = 0. $$
Setting $\phi^i \: := \: \Phi^i + \chi^i$, we have for  $i=1\dots k$ the convergence of $\phi^i_n$ to $\phi^i$ strongly in $H^1_0(D)$. 
 
 \medskip
 
Any harmonic harmonic function satisfying 
\begin{equation*} 
\Delta \psi \: = \: 0 \: \mbox{ in } \: \Omega_n, \quad \frac{\pa \psi}{\pa \tau} \vert_{\pa \Omega_n} \: = \: 0,  \quad  \psi\vert_{\pd \widetilde \OM_n} = 0, 
\end{equation*} 
can be decomposed on the $\phi^j_n$'s, then there exists some constants such that
\begin{equation} \label{decompo} \psi^i_n \: = \: \sum_{j=1}^k c^{i,j}_n \, \phi^j_n. 
\end{equation}
We just have established the convergence of the $\phi^j_n$'s, let us now turn to the convergence of the constants $c^{i,j}_n$. We take the normal derivative at both sides of \eqref{decompo} 
 and integrate along $\pa O^m_n$, for $m \in \{ 1, \dots k \}$. We obtain thanks to \eqref{psii} and \eqref{phii}: 
 $$ -\delta_{im} \: = \:  \sum_{j=1}^k c^{i,j}_n \int_{\pa O^m_n}\frac{\pa \phi^j_n}{\pa n} \: = \:   \sum_{j=1}^k c^{i,j}_n \int_{\Omega_n}  \na \phi^j_n \cdot \na \phi^m_n. $$ 
Introducing the  $k \times k$ identity matrix $Id$, this last line reads: 
 $$ -Id  \: = \:   C_n \, P_n, \quad \mbox{ with } \: C_n = \left( c^{i,j}_n \right)_{1 \le i,j \le k}, \quad P_n = \left( \int_{\Omega_n} \na \phi_n^i \cdot \na \phi_n^j \right)_{1 \le i,j \le k}.   $$
 Our goal is to show the convergence of $C_n$: it is therefore enough to prove the convergence of $P_n$ to an invertible matrix $P$. But from the previous step, that is the convergence of $\phi^i_n$ to $\phi^i$ in $H^1_0(D)$, we know that $P_n$ converges to  
 $$P := \left( \int_{D} \na \phi^i \cdot \na \phi^j  \right)_{1 \le i,j \le k}.   $$
 The matrix $P$ is selfadjoint and nonnegative: namely, for any vector $\lambda \in \R^k$, 
 $$ P \lambda \cdot \lambda \: = \: \int_{D} | \na \sum_{i=1}^k \lambda_i \, \phi^i |^2 .$$ 
Thus, to prove the invertibility of $P$, it is enough to show that the $\phi^i$'s are linearly independent. 	Assume {\it a contrario} that 
$$   \sum \lambda_i \,   \phi^i = 0 \: \mbox{ almost everywhere, for some non-zero vector $\lambda$}. $$
Up to reindex the functions, one can assume that  $\lambda_1 \neq 0$. We remind that  the functions $\Phi^i := \phi^i - \chi^i$ belong to $H^1_0(\Omega)$ (see above). Thus, there exists a sequence of functions $\tilde \Phi^i_n$ in $C^\infty_c(\Omega)$ converging to $\Phi^i$ in $H^1_0(\Omega)$, $\: i=1...k$. We set $\tilde \phi^i_n \: := \: \tilde \Phi^i_n + \chi^i$, and introduce  
$$  v_n \: := \:  \frac{1}{\lambda_1} \left( \sum_{i=1}^k \lambda_i \, \tilde \phi^i_n \right) \: \rightarrow  \: 0 \quad  \mbox{ in } \: H^1_0(D). $$
Clearly, $v_n = 1$ on a neighborhood of ${\cal C}^1$. It follows that 
$$ \int_{D} | \na v_n |^2 \: \ge \: \capa_D({\cal C}^1), $$
and letting $n$ go to infinity leads to $\capa_D({\cal C}^1) = 0$.  This contradicts Assumption (H3). 

\medskip
Eventually, we obtain that $P$ is invertible, which  yields a uniform  bound on the $c^{i,j}_n$'s, and their convergence (up to  subsequences) to some limit constants $c^{i,j}$.  
From the above lines and from relation \eqref{decompo}, we deduce that  $(\psi^i_n)_{n \in \N}$ converges (up to a subsequence) to 
$\displaystyle \psi^i \: := \: \sum_{j=1}^k c^{i,j} \, \phi^i$ in $H^1_0(D)$, for all $i=1, \dots, k$. 

Moreover, from the definition of $\p^i_n$ \eqref{psii}, we know that the weak circulations verify
\begin{equation} \label{circharmonic}
\g^j(\na^\perp \p^i)= -\int_{\OM} \na^\perp  \p^i \cdot \na^\perp \chi^j = \lim_{n\to \infty} -\int_{\OM_n} \na^\perp  \p^i_n \cdot \na^\perp \chi^j = \d_{ij}.
\end{equation}

\medskip
To completely control the harmonic part of the velocity $u_n$, it remains to show convergence of the time dependent functions $\alpha^i_n, \: $ $\: i=1\dots k$ in  \eqref{hodge}. We shall use the following proposition, to be found in \cite{milton}:
\begin{proposition}\label{prop : alpha}
For all $i=1\dots k, \:$ $\: \alpha^i_n \: = \:  \int_{\Omega_n} \phi^i_n \, \omega_n \, dx \: + \: \int_{\pa O^i_n} u_n \cdot \tau \,  ds$.
\end{proposition}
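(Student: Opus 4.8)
The plan is to compute the pairing $\int_{\Omega_n} u_n\cdot\na^\perp\phi^i_n\,dx$ in two independent ways and match the outcomes, where $\phi^i_n$ is the harmonic function of \eqref{phii}, which is locally constant on $\pa\Omega_n$ with $\phi^i_n\vert_{\pa\widetilde\Omega_n}=0$ and $\phi^i_n\vert_{\pa O^j_n}=\delta_{ij}$. The first computation uses only the definition $\om_n=\curl u_n$ together with a Green-type integration by parts, while the second uses the Hodge decomposition \eqref{hodge} and the defining properties \eqref{psi0}--\eqref{psii} of the streamfunctions. Comparison of the two expressions then isolates $\alpha^i_n$.

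For the first computation I would write, integrating by parts the first-order operator $\curl$ on $\Omega_n$,
$$ \int_{\Omega_n}\phi^i_n\,\om_n\,dx \;=\; \int_{\Omega_n}\phi^i_n\,\curl u_n\,dx \;=\; -\int_{\Omega_n} u_n\cdot\na^\perp\phi^i_n\,dx \;+\; \int_{\pa\Omega_n}\phi^i_n\,(u_n\cdot\tau)\,ds. $$
Since $\phi^i_n$ vanishes on $\pa\widetilde\Omega_n$ and equals $\delta_{ij}$ on $\pa O^j_n$, only the component $\pa O^i_n$ survives in the boundary term, which therefore reduces to a multiple of $\int_{\pa O^i_n} u_n\cdot\tau\,ds$. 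This already produces the circulation term of the statement; its sign is settled below.

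For the second computation I insert \eqref{hodge} into $\int_{\Omega_n} u_n\cdot\na^\perp\phi^i_n$ and use the pointwise identity $\na^\perp a\cdot\na^\perp b=\na a\cdot\na b$, so that
$$ \int_{\Omega_n} u_n\cdot\na^\perp\phi^i_n \;=\; \int_{\Omega_n}\na\psi^0_n\cdot\na\phi^i_n \;+\; \sum_{j=1}^k\alpha^j_n\int_{\Omega_n}\na\psi^j_n\cdot\na\phi^i_n. $$
The first integral vanishes: integrating by parts and using $\Delta\phi^i_n=0$ leaves only the boundary term $\int_{\pa\Omega_n}\psi^0_n\,\pa_\nu\phi^i_n$, which is zero since $\psi^0_n\vert_{\pa\Omega_n}=0$ by \eqref{psi0}. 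For each remaining integral I integrate by parts the other way, using $\Delta\psi^j_n=0$ and the boundary values of $\phi^i_n$ from \eqref{phii}, to obtain $\int_{\Omega_n}\na\psi^j_n\cdot\na\phi^i_n=\int_{\pa O^i_n}\pa_\nu\psi^j_n\,ds=-\delta_{ij}$, the last equality being exactly the flux normalisation in \eqref{psii}. Hence $\int_{\Omega_n} u_n\cdot\na^\perp\phi^i_n=-\alpha^i_n$, and substituting this into the first computation gives $\alpha^i_n=\int_{\Omega_n}\phi^i_n\,\om_n\,dx+\int_{\pa O^i_n}u_n\cdot\tau\,ds$, as claimed.

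The only genuinely delicate point, and the one I would handle most carefully, is the orientation bookkeeping in the boundary integrals. The outward normal $\nu$ of $\Omega_n$ points into the holes $O^i_n$, so along $\pa O^i_n$ it is the inward normal of the Jordan curve and is opposite to the quarter-turn of the counterclockwise tangent $\tau$ used to define $\int_{\pa O^i_n}u_n\cdot\tau\,ds$. Tracking this sign flip on the hole boundaries is precisely what turns the naive $-\int_{\pa O^i_n}u_n\cdot\tau$ from Green's formula into the $+\int_{\pa O^i_n}u_n\cdot\tau$ of the statement, and what makes the two integration-by-parts directions in the second computation consistent with the normalisation in \eqref{psii}.
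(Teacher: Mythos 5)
Your proposal is correct, and in fact it supplies a proof where the paper gives none: Proposition~\ref{prop : alpha} is simply quoted from \cite{milton}, so there is no in-paper argument to compare against. Your double computation of $\int_{\Omega_n}u_n\cdot\na^\perp\phi^i_n$ is the natural route and all the steps are legitimate here, since $\Omega_n$ is smooth and $u_n$, $\psi^j_n$, $\phi^i_n$ are smooth up to the boundary. The sign bookkeeping you flag does work out: with $\tau=\nu^\perp$ and $\nu$ the outward normal of $\Omega_n$, Green's formula gives
\begin{equation*}
\int_{\Omega_n}\phi^i_n\,\om_n
=-\int_{\Omega_n}u_n\cdot\na^\perp\phi^i_n+\int_{\pa\Omega_n}\phi^i_n\,(u_n\cdot\nu^\perp)\,ds
=\alpha^i_n-\int_{\pa O^i_n}u_n\cdot\tau\,ds,
\end{equation*}
because on $\pa O^i_n$ the outward normal of $\Omega_n$ is the inner normal of $O^i_n$, so $\nu^\perp$ is the \emph{clockwise} tangent and the boundary term equals $-\int_{\pa O^i_n}u_n\cdot\tau\,ds$ with $\tau$ counterclockwise as in the paper; meanwhile the second computation, using $\psi^0_n\vert_{\pa\Omega_n}=0$ from \eqref{psi0}, $\Delta\phi^i_n=0$ from \eqref{phii} and the flux normalisation $\int_{\pa O^i_n}\pa_\nu\psi^j_n=-\delta_{ij}$ from \eqref{psii} (already stated with the $\Omega_n$-outward normal, so no extra sign there), gives $\int_{\Omega_n}u_n\cdot\na^\perp\phi^i_n=-\alpha^i_n$. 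Rearranging yields the claimed identity. The only wording I would tighten is the description of the quarter-turn relating $\nu$ and $\tau$ (with the convention $v^\perp=(-v_2,v_1)$ one has $\nu=\tau^\perp$ on $\pa O^i_n$, not its opposite), but this does not affect the conclusion.
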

 By Kelvin's  theorem, the circulation of $u_n$ on each $\pa O^i_n$ is constant in time  
so that
$$\alpha^i_n \: = \:  \int_{\Omega_n} \phi^i_n \, \omega_n \, dx \: + \: \int_{\pa O^i_n} u^0_n \cdot \tau \,  ds.$$

Moreover, we remind that the vorticity $\omega_n$ obeys the transport equation
$$ \pa_t \omega_n + u_n \cdot \na \omega_n = 0$$
so that the  $L^q$ norms are conserved: 
\begin{equation} \label{conservationlp}
\mbox{ $\| \omega_n(t, \cdot) \|_{L^q(\Omega_n)} = \| \omega^0_n \|_{L^q(\Omega_n)} \le  \| \omega^0 \|_{L^q(\Omega)}$, $\ 1 \le q \le \infty$. }
\end{equation}
We now extend   $\omega_n$ by $0$ outside $\Omega_n$ for all $n$, so that  the sequence  $\left( \omega_{n} \right)_{n \in \N}$ is bounded in $L^\infty(\R^+ \times D)$. Up to the extraction of a subsequence,  we deduce  that 
\begin{equation} \label{convergenceomegan}
\omega_n \rightarrow \omega \:  \mbox{ weakly *  in } \: L^\infty(\R^+ \times D). 
\end{equation}
One has easily that 
\begin{equation} \label{omegazero}
 \omega = 0 \quad \mbox{outside }\: \overline{\Omega}.
 \end{equation}

From this convergence, we infer that $\alpha^i_n$ converges weakly* in $L^\infty(\R^+)$ to 
$$
\alpha^i  :=    \:  \int_{\Omega} \phi^i \, \omega \, dx \: + \: \g^i
$$

Unfortunately, we cannot establish rightaway strong convergence of $(\alpha^i_n)$. We need  some uniform $L^\infty L^2$ bounds on $u_n$, to be  obtained in the section below.

\subsection{Study of the rotational part} \label{subsecrot} \ 

A simple energy estimate on \eqref{psi0}   yields 
$$ \| \na \psi^0_n(t,\cdot) \|_{L^2(\Omega_n)}^2 \: \le \:   \| \omega_n(t, \cdot) \|_{L^2(\Omega_n)} \| \psi^0_n(t, \cdot) \|_{L^2(\Omega_n)}, \quad \forall t,n. $$
Extending $\psi^0_n$ by zero outside $\Omega_n$, we can see it as an element of  $H^1_0(D)$. By applying the Poincar\'e inequality on $D$ and \eqref{conservationlp}, we end up  with 
$$ \|  \psi^0_n(t,\cdot) \|_{H^1_0(D)} \: = \:  \|  \psi^0_n(t,\cdot) \|_{H^1_0(\Omega_n)}\: \le \:  C \, \| \omega_n(t, \cdot) \|_{L^2(\Omega_n)} \: \le \:  C'  $$
uniformly in $t,n$, in particular for $t=0$. Combining this bound on $\psi^0_n(0,\cdot)$  with the estimates on $\p^i_n$ and $\a^i_n(0)$,  we obtain that $u^0_n$ is uniformly bounded in $L^2$. Then, the conservation of energy implies that 
$$\| u_n(t) \|_{L^2(\Omega_n)} \: = \: \|  u^0_n \|_{L^2(\Omega_n)} \: \le \: C, \quad \forall t,$$
that is a uniform $L^\infty L^2$ estimate on $u_n$.

\medskip
On one hand,  this estimate implies  the strong convergence of $\a^i_n$  (and completes the analysis of the harmonic part). Indeed, we compute
$$ (\alpha^i_n)' \: = \:  \int_{\Omega_n} \phi^i_n \, \pd_t \omega_n \, dx \: = \:  -\int_{\Omega_n} \phi^i_n \, \div(u_n\om_n) \, dx \: = \:  \int_{\Omega_n} \na \phi^i_n \cdot u_n \omega_n \, dx.$$
Using the uniform $L^2$ bounds on   $\na \phi^i_n$ and $u_n$, we infer that $\alpha^i_n$ is uniformly bounded in $W^{1,\infty}(\R^+)$ which means that the converge holds strongly in $C^0_{\loc}(\R^+)$.

\medskip
On the other hand, this estimate allows a control of the time derivatives of $\psi^0_n$. Indeed, we observe that $\pa_t \psi^0_n$ satisfies 
\begin{equation*}
\Delta \left( \pa_t \psi^0_n \right) \: = \: \pa_t \omega_n \: =  \: - \div \left( u_n \omega_n\right) \: \mbox{ in } \: \Omega_n, \quad \pa_t \psi^0_n\vert_{\pa \Omega_n} = 0.  
\end{equation*}
Using the uniform $L^\infty L^2$ and $L^\infty$ bounds on $u_n$ and $\omega_n$ respectively, we get similarly
$$  \|  \pa_t \psi^0_n(t,\cdot) \|_{H^1_0(D)} \: \le \: C, \quad   \forall t,n. $$ 
From these bounds and standard compactness lemma \cite{Simon2}, there exists $\psi^0 \in W^{1,\infty}(\R^+;  H^1_0(D))$
 such that up to a subsequence: 
$$ \psi^0_{n} \rightarrow \psi^0 \: \mbox{ weakly* in } \: W^{1,\infty}(\R^+; H^1_0(D)) \: \mbox{ and strongly in }  C^0(0,T; L^2(D)), \quad \forall T > 0. $$
 From the weak convergence of $\psi^0_n$ and $\omega_n$, we infer that 
\begin{equation} \label{limitpsi0}
 \Delta \psi^0(t, \cdot) = \omega(t,\cdot) \: \mbox{ in } \: {\cal D}'(\Omega), \: \mbox{ for almost every } \: t 
 \end{equation}
using again that any compact subset of $\Omega$ is included in $\Omega_n$ for $n$ large enough. 
 
 \medskip
As $\Omega_n$ $\gamma$-converges to $\Omega$, we can use Proposition \ref{prop10}: $\psi^0_n(t, \cdot)$ has for every $t$ a subsequence that converges weakly in $H^1_0(D)$ to a limit in $H^1_0(\Omega)$. Thus, for every $t$,  $\psi^0(t,\cdot)$ belongs to $H^1_0(\Omega)$.

\medskip
Finally, let us prove the strong convergence of $\psi^0_n$ to $\psi^0$ in $L^2(0,T; H^1_0(D))$ for all $T >0$. Therefore, 
we go back to the equation  \eqref{psi0}. We compute:
\begin{equation*} 
\int_0^T \int_{D} |\na \psi^0_n|^2 =  \int_0^T \int_{\Omega_n}  |\na \psi^0_n|^2 = -\int_0^T \int_{\Omega_n} \omega_n \,   \psi^0_n = \: - \int_0^T \int_D \omega_n  \,  \psi^0_n \: \rightarrow - \int_0^T \int_D \omega \, \psi^0 
\end{equation*}
As we know from the previous paragraph  that $\psi^0(t,\cdot)$ belongs to $H^1_0(\Omega)$ for every $t$, we can perform an energy estimate on \eqref{limitpsi0} as well. We get 
$$ \int_0^T \int_D |\na \psi^0|^2 =  \int_0^T \int_\Omega |\na \psi^0|^2 = - \int_0^T \int_\Omega \omega \, \psi^0 = - \int_0^T \int_D \omega \, \psi^0  $$
Hence, 
$$ \int_0^T \int_{D} |\na \psi^0_n|^2  \: \rightarrow \:  \int_0^T \int_D |\na \psi^0|^2  $$
which together with the weak convergence in $W^{1,\infty}(0,T; H^1_0(D))$ yields the strong convergence of $\psi^0_n$ to $\psi^0$ in $L^2(0,T; H^1_0(D))$ for all $T > 0$. 

\begin{remark}
First, we note here that the $\g$-convergence appears crucial to get the good boundary condition $\psi^0(t,\cdot)\in H^1_0(\Omega)$, which has allowed us to get the strong convergence thanks to an integration by parts. Second, we have used $\om^0\in L^\infty$ to get uniform estimates on  $(\alpha^i_n)'$ and $\pa_t \psi^0_n(t,\cdot)$, whereas the other estimates only require $\om^0\in L^p(\Omega)$.
\end{remark}

\subsection{Conclusion of the proof}\ \label{subsecconclu}

We can now conclude the proof of Theorem \ref{theorem1}. Let $(u_n)_{n \in \N}$ be  the sequence of Euler solutions in $\Omega_n$, associated to the initial data $u^0_n$. Each field $u_n$ has the  Hodge decomposition  \eqref{hodge}.  Through  obvious extension   of $\: \psi^m_n, \:$ $\: m=0 \dots k$, it can be seen as an element  of $L^\infty(\R^+; L^2(D))$. By the results of the previous subsections, it  {\em converges strongly in $L^2((0,T) \times D)$  and weakly* in $L^\infty(\R^+; L^2(D))$},  $ \: T > 0, \: $  to the field
$$  u(t,x) \: = \: \na^\bot \psi^0(t,x) \: + \: \sum_{i=1}^k \alpha^i(t) \na^\bot \psi^i(x).$$
Note that  $\psi^0$ belongs to $L^\infty(\R^+; H^1_0(\Omega))$ whereas for $i=1,\dots,k$, $\: \alpha^i$ belongs to $C^0(\R^+)$ and $\psi^i$ belongs to $H^1_0(\widetilde\Omega)$.  Moreover, by construction, one has 
$\displaystyle \curl u  \: = \:  \Delta \psi^0 \: =  \: \omega \in L^\infty(\R^+ \times \Omega)$
as well as the divergence-free and tangency conditions. Indeed, we can decompose $u$ as:
$$  u(t,x) \: = \: \Bigl[ \na^\bot \psi^0(t,x) \: + \: \sum_{i=1}^k \sum_{j=1}^k  \alpha^i(t) c^{i,j}  \na^\bot \Phi^j(x) \Bigl] + \: \sum_{i=1}^k \sum_{j=1}^k  \alpha^i(t) c^{i,j}  \na^\bot \chi^j(x),$$
where $\psi^0(t,x)$ and $\Phi^j(x)$ belong to $H^1_{0}(\Omega)$ (hence the perpendicular gradient verifies \eqref{impermbis}), and it is clear that $ \na^\bot \chi^j$ verifies \eqref{imperm}.

\medskip
An important remark is that all the reasoning we have made so far also applies to the initial data (without the difficulties linked to time dependence). In particular, it can be seen  that  the sequence $(u^0_n)$ converges strongly in $L^2(\Omega)$ (up to a subsequence). Moreover, its limit $\tilde{u}^0$ has a Hodge decomposition,  
$$  \tilde{u}^0(x) \: = \: \na^\bot \psi^{0,0}(x) \: + \: \sum_{i=1}^k \alpha^{0,i} \na^\bot \psi^i(x).$$
 with $\psi^{0,0} \in H^1_0(\Omega)$, $\Delta \psi^{0,0} = \omega^0$   and $ \alpha^{0,i} :=   \int_{\Omega} \phi^i \, \omega^0 \, dx \:  + \g^i$. In particular, it satisfies 
$$\curl \tilde{u}^0  \: = \:  \omega^0,$$
as well as the divergence-free and tangency conditions  \eqref{imperm}. 

Noting that $\chi^j = \phi^j - \Phi^j$ with $\Phi^j\in H^1_0(\OM)$, we compute the weak circulation of $\na^\bot \psi^{0,0}$:
\begin{eqnarray*}
\g^j(\na^\bot \psi^{0,0}) &=& -\int_{\OM} \om^0  \phi^j  -\int_{\OM} \na^\bot \psi^{0,0} \cdot \na^\perp  \phi^j + \int_{\OM} \om^0\Phi^j + \int_{\OM} \na^\bot \psi^{0,0} \cdot \na^\perp \Phi^j \\
&=& -\int_{\OM} \om^0  \phi^j
\end{eqnarray*}
where we have integrated by parts and used that $\psi^{0,0}, \Phi^j\in H^1_0(\OM)$ and $\D \phi^j=0$.  Moreover, we remind (see \eqref{circharmonic}) that 
$$ \g^j(\na^\bot \psi^{i}) = \delta_{ij}.  $$
Combining everything, it follows that the difference $v^0 := \tilde{u}^0 - u^0$ is curl-free, divergence free, with zero weak circulation around each ${\cal C}^i$ and a tangency condition. Let us show that $v^0 = 0$.  First, it is well-known that the curl-free and divergence-free conditions on $v^0$ amount to  Cauchy-Riemann equations for the function $(x+iy) \mapsto (v^0_1 + i v^0_2)$, which is then holomorphic. In particular, $v^0$ is smooth in $\Omega$. If we now prove that the circulation 
\begin{equation}  \label{zerocircul}
\oint_\Gamma v^0 \cdot \tau = 0
\end{equation}
 for any smooth closed curve $\Gamma$ inside $\Omega$, it will follow that $v^0 = \na p^0$ for some smooth $p^0$ inside $\Omega$. In particular, we will have $v^0 \in G(\Omega) \cap {\cal H}(\Omega) = \{ 0 \}$ (see \eqref{imperm}-\eqref{impermbis}).  
 
Thus, let $\Gamma$ be a smooth closed curve in $\Omega$. Let  $J_\Gamma$ the Jordan domain defined by $\Gamma$. Let $\tilde \chi^j = \tilde \chi^{j, \eps}$, $j=1\dots k$,  cut-off functions near the obstacles, satisfying 
 \begin{equation}
\tilde \chi^j =  1 \: \mbox{ in an $\eps$-neighborhood of } \, C^j,  \quad \tilde \chi^j =  0 \: \mbox{ outside a $2\eps$-neighborhood of } \, C^j.
\end{equation}
For $\eps$ small enough (depending on $\Omega$ and $\Gamma$), the supports of $\tilde \chi^j$ do not intersect each other, $\Gamma$ and $\pa \tilde \Omega$. In particular,  $1 - \sum_{j=1}^k \tilde \chi^j$ is identically one near $\Gamma$, and identically zero in a neighborhood of all obstacles contained in $J_\Gamma$.  We deduce: 
$$  \oint_\Gamma v^0 \cdot \tau = \oint_{\Gamma} v^0 \cdot \tau \, (1 -  \sum_{j=1}^k \tilde \chi^j) = \int_{J_\Gamma \cap \Omega} v^0 \cdot \na^\perp (1 -  \sum_{j=1}^k \tilde \chi^j)  = - \sum_{j, C_j \subset J_\Gamma} \int_\Omega  v^0 \cdot \na^\perp  \tilde \chi^j.$$
 Finally, using relations \eqref{cutoff} -\eqref{weak circulation}  (with other cutoff functions $\chi^j$), 
$$   \int_\Omega  v^0 \cdot \na^\perp  \tilde \chi^j = \int_{\Omega} v^0 \cdot \na^\perp (\tilde \chi^j - \chi^j) = -\int_\Omega \curl v^0 (\tilde \chi^j - \chi^j) = 0 $$
($v^0$ has zero weak circulations and is curl-free).
%\footnote{considering any curve around ${\cal C}^i$, the curl and div free conditions imply that the velocity is $W^{1,p}$, $\forall p$, and the weak circulation gives that the standard circulation of the velocity on this curve is equal to zero.} 

Hence,  $\tilde{u}^0 - u^0 = 0$. In particular, $u^0_n$ converges to $u^0$ strongly in $L^2$. As a byproduct, we obtain the existence and uniqueness of a Hodge decomposition for data $u^0$ satisfying \eqref{initialdatabounded} in the irregular open set $\Omega$.

\medskip
Finally, let $\varphi \in  {\cal D}\left([0, +\infty) \times \Omega\right)$, with $\div \varphi = 0$. For $n$ large enough, the support of $\varphi$ is included in $\Omega_n$ so that:
\begin{equation*} 
  \int_0^{\infty} \int_\Omega \left( u_n \cdot \pd_t \varphi +   (u_n \otimes u_n) : \na \varphi \right)  = -\int_\Omega u^0_n \cdot \varphi(0, \cdot) 
\end{equation*}
By the  strong $L^2$ convergence of $u_n$ to $u$, and of $u^0_n$ to $u^0$,  it follows that $u$ satisfies the weak form of the Euler equations \eqref{Eulerweak}.

\section{Theorem \ref{theorem2} for $p=\infty$} \label{section3}
This section is devoted to the proof of  Theorem \ref{theorem2} (exterior domains), in the special case $p=\infty$. It shares many features with the previous section, related to bounded domains: the existence follows from regularization (paragraph \ref{subsecregul}), derivation of uniform bounds and convergence. Again, the main point is the study of streamfunctions,  through the use of Hodge decomposition.  A big difference with the bounded case is that we loose {\it a priori} the Poincar\'e inequality, that was involved in the treatment of Laplace equations satisfied by the streamfunctions. Still, a Poincar\'e inequality can be used, thanks to the positive capacity of the obstacle, and the fact that the vorticity is compactly supported. This inequality is established in paragraph \ref{subsecpoincare}. Nevertheless, a new difficulty is to  obtain a uniform bound on the compact support of the approximate vorticities $\omega_n$. This can be achieved through a uniform $L^\infty$ bound on the velocity $u_n$, which is itself derived from an explicit representation of $u_n$ of Biot-Savart type. This is described in paragraph \ref{uniformestimates}. From there, arguments are much similar to those of the bounded case.

\subsection{Regularization of the data} \label{subsecregul} \
Let $\Omega$ of type \eqref{omegatype2}, satisfying (H1'). Following the proof of Proposition \ref{approxlemma} , it can be shown that $\Omega$ is the Hausdorff limit of a sequence 
$$\Omega_n \: := \:  \R^2 \: \setminus \:  \overline{O_n}$$ 
where  $O_n$ is a  smooth Jordan domain, whose closure converges in the Hausdorff sense to ${\cal C}$.

\medskip

By \eqref{initialdataunbounded2}, there exists $\rho_{0}>0$ such that $\omega^0:=\curl u^0$ is compactly supported in $B(0,\rho_{0})$. 
As in the previous section, we infer by truncation and convolution that there exists some sequence $\omega^0_n \in C^\infty_c(\Omega_{n}\cap B(0,\rho_{0}))$ such that 
$$ \omega^0_n \rightarrow \omega^0 \quad   \mbox{ strongly in }  L^q(\Omega), \quad \forall q \in [1,\infty) $$
and
$$  \| \omega^0_n \|_{L^q} \le  \| \omega^0 \|_{L^q}, \quad \forall q \in [1,\infty]. $$

\medskip
To build up an appropriate initial velocity $u^0_n$ in $\OM_n := \R^2 \setminus \overline{O_n}$ from the vorticity, we need to specify the value of the circulation somewhere. 
 Let $J$ be a smooth closed Jordan curve in $\OM$ such that  $\mathcal{C}$ is included in the interior of  $J$. For any $n$, we consider as an  initial velocity $u_n^0$  the unique vector field in $\OM_n$ which verifies
\[\div u_n^0 = 0,\quad  \curl u_n^0=\om^0_n, \quad  u_n^0\vert_{\pa O_n}\cdot \nu  = 0,\quad   \int_{J} u_n^0 \cdot \tau \, ds=  \int_{J} u^0 \cdot \tau \, ds \: \text{ and } \lim_{|x|\to \infty} u_n^0(x) = 0.\]
As $u^0$ verifies \eqref{initialdataunbounded}, we note that $u^0$ belongs to  $W^{1,q}_{\loc}(\OM)$ for all finite $q$, hence the quantity $\int_{J} u^0 \cdot \tau \, ds$  is well-defined in the trace sense. 
As $\OM_n$ is smooth, $u^0_n$ generates a unique global strong solution of the Euler equations (see e.g. \cite{kiku}). The transport equation governing the vorticity implies that the $L^q$ norms are conserved:
\begin{equation}\label{norm om}
 \| \om_n(t,\cdot)\|_{L^q(\OM_n)} = \| \om^0_n \|_{L^q} \le \| \omega^0 \|_{L^q}, \ 1\leq q \leq +\infty.
\end{equation}

\medskip
As in the previous part, the Hodge-decomposition will be useful to obtain estimates on the velocity:
\begin{equation} \label{hodge-ext}
 u_n(t,x) \: = \: \na^\bot \psi^0_n(t,x) \: + \: \alpha_n(t) \na^\bot \psi_n(x)
\end{equation}
where $\psi^0_n$ satisfies for any $t$ the Dirichlet problem 
\begin{equation} \label{psi0-ext}
\Delta \psi^0_n  \: = \: \om_n   \: \mbox{ in } \: \Omega_n, \quad \psi^0_n \vert_{\pa \Omega_n} = 0, \quad   \psi^0_n (x) = \mathcal{O}\big(\frac1{|x|}\big) \text{ as } x\to \infty,
\end{equation}
whereas $\psi_n$ is the harmonic function satisfying 
\begin{equation} \label{psii-ext}
\Delta \psi_n \: = \: 0 \: \mbox{ in } \: \Omega_n, \quad \frac{\pa \psi_n}{\pa \tau} \vert_{\pa \Omega_n} \: = \: 0, \quad \int_{\pa O_n} \frac{\pa \psi_n}{\pa \nu } = -1, \quad    \psi_n(x) = \mathcal{O}\big(\ln |x|\big) \text{ as } x\to \infty.
\end{equation} 
The function $\a_n(t)$ is the sum of the circulation of the velocity around $O_n$ and the mass of the vorticity $\int_{\OM_n} \om_n(t,\cdot)$. By the Kelvin's circulation theorem and the transport nature of the vorticity equation, we infer that these two quantities are conserved, hence
\begin{equation} \label{definitionalpha}
 \a_n(t)\equiv \a_n =  \int_{\pd O_n} u^0_n \cdot \tau \, ds  + \int_{\OM_n} \om^0_n = \int_{\pd J} u^0_n \cdot \tau \, ds - \int_{A_n} \om^0_n  + \int_{\OM_n} \om^0_n
  \end{equation}
  where $A_n := \OM_n \setminus Ext(J)$ (with $Ext(J)$ the exterior of $J$), hence
 \begin{equation}
\label{definitionalpha2}
\a_n \: \rightarrow \: \alpha \: := \:  \int_{J} u^0 \cdot \tau \, ds  + \int_{Ext(J)} \om^0.
\end{equation}

\subsection{Poincar\'e inequality in exterior domains}  \label{subsecpoincare}   \ 

Thanks to the properties in \eqref{psi0-ext}, we integrate by parts to obtain:
\begin{equation}\label{IPP}
 \|  \na  \psi^0_n  \|_{L^2(\OM_n)}^2 \leq \| \om_n \|_{L^2(\OM_n)}  \| \psi^0_n  \|_{L^2(\OM_n\cap \supp \om_n)}.
\end{equation}
In the case of a bounded domain, we used the Poincar\'e inequality on a domain $D$ containing all the $\OM_n$'s. The idea here is to establish a similar inequality, thanks to the $\g$-convergence of $\overline{O_n}$ to $\mathcal{C}$ with $\capa\  \mathcal{C} >0$.

\begin{lemma}\label{poincare}
Let $\r$ be a positive number such that $ {\cal C} \subset B(0,\r)$. Assume $\Omega$ is of type \eqref{omegatype2}, with (H1')-(H2'), then there exists $C_\r>0$ and $N_\r$, depending only on $\r$, such that
\[  \| \f  \|_{L^2(\OM_n\cap B(0,\r))} \leq C_\r \| \na \f  \|_{L^2(\OM_n \cap B(0,\r))}, \ \forall \f \in C^\infty_c (\OM_n),\ \forall n\geq N_\r.\]
\end{lemma}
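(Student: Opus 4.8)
The plan is to argue by contradiction and compactness, reducing the uniform Poincar\'e inequality on the unbounded family $\OM_n\cap B(0,\r)$ to the positivity of $\capa(\mathcal{C})$ through the $\g$-convergence of a \emph{truncated} family of domains. Suppose the conclusion fails for some $\r$. Negating the statement (for every $C$ and $N$ there is $n\geq N$ and a bad $\f$) and choosing $C=N=j\to\infty$, I would obtain indices $n_j\to\infty$ and functions $\f_j\in C^\infty_c(\OM_{n_j})$ which, after normalization, satisfy $\|\f_j\|_{L^2(\OM_{n_j}\cap B(0,\r))}=1$ and $\|\na\f_j\|_{L^2(\OM_{n_j}\cap B(0,\r))}\to 0$. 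Since each $\f_j$ vanishes in a neighborhood of $\overline{O_{n_j}}$, extending it by zero inside $O_{n_j}$ produces a function in $H^1(B(0,\r))$ with uniformly bounded norm.

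By Rellich's theorem, a subsequence converges strongly in $L^2(B(0,\r))$ and weakly in $H^1(B(0,\r))$ to some $\f$; as $\na\f_j\to 0$ strongly, one has $\na\f\equiv 0$, so $\f$ equals a constant $c$ on the connected ball $B(0,\r)$, and $\|\f\|_{L^2(B(0,\r))}=1$ forces $c\neq 0$. The remaining task is to contradict $c\neq 0$ using the positive capacity of $\mathcal{C}$.

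To localize away from the uncontrolled outer boundary $\pa B(0,\r)$, I would fix a cutoff $\theta\in C^\infty_c(B(0,\r))$ with $\theta\equiv 1$ on a neighborhood of $\mathcal{C}$ (possible since the compact $\mathcal{C}$ is compactly contained in the open ball). Then $\theta\f_j\in C^\infty_c(B(0,\r)\setminus\overline{O_{n_j}})\subset H^1_0(\OM_{n_j}\cap B(0,\r))$, and since $\na(\theta\f_j)=\theta\na\f_j+\f_j\na\theta\to c\,\na\theta$ strongly in $L^2$ (the first term vanishes, the second converges by the \emph{strong} $L^2$ convergence of $\f_j$ on $\supp\na\theta\subset B(0,\r)$), one gets $\theta\f_j\to c\theta$ strongly, hence weakly, in $H^1_0(B(0,\r))$. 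For $n$ large the truncated domains $B(0,\r)\setminus\overline{O_n}$ have, in the box $D:=B(0,\r)$, the single connected complement $\overline{O_n}$, so Proposition \ref{prop9} applies and they $\g$-converge to $B(0,\r)\setminus\mathcal{C}$. By the Mosco-type property of Proposition \ref{prop10}, the weak $H^1_0(B(0,\r))$-limit $c\theta$ then belongs to $H^1_0(B(0,\r)\setminus\mathcal{C})$, i.e.\ its quasicontinuous representative vanishes quasi-everywhere on $\mathcal{C}$. But $c\theta\equiv c$ on $\mathcal{C}$; since $c\neq 0$, this would force $\capa(\mathcal{C})=0$, contradicting (H2').

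The main obstacle is the free outer boundary: the hypothesis controls $\na\f$ only on $B(0,\r)$, so one cannot work with $\f_j$ on a larger ball, and the cutoff is essential — its success hinges on the strong (not merely weak) $L^2$ convergence of $\f_j$ on the annular support of $\na\theta$. The second delicate point, where (H2') is genuinely used, is the passage from ``vanishing on the shrinking holes $\overline{O_{n_j}}$'' to ``vanishing quasi-everywhere on $\mathcal{C}$''; this capacitary stability is exactly what $\g$-convergence supplies, and is precisely what fails for point obstacles of zero capacity.
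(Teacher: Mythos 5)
Your proof is correct and follows essentially the same route as the paper's: contradiction and normalization, Rellich compactness to a nonzero constant limit, a cutoff supported in $B(0,\r)$ equal to $1$ near $\mathcal{C}$, Sver\'ak's theorem (Proposition \ref{prop9}) to get $\gamma$-convergence of $B(0,\r)\setminus\overline{O_n}$ to $B(0,\r)\setminus\mathcal{C}$, the Mosco property (Proposition \ref{prop10}) to place the limit in $H^1_0(B(0,\r)\setminus\mathcal{C})$, and the quasi-everywhere vanishing (Proposition \ref{prop11}) to contradict $\capa(\mathcal{C})>0$. The only (harmless) difference is that you upgrade the convergence of the truncated sequence to strong $H^1$ convergence, where the paper only records weak convergence.
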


\begin{proof}
Let us assume that the conclusion is false, which means that for any $k\in \N$, if we choose $C_\r = k$ and $N_\r=\max(k,n_{k-1})$, then there exist $n_k \geq N_\r$ and $\f_k \in C^\infty_c ( \OM_{n_k})$ such that
\[ \| \f_k  \|_{L^2(\OM_{n_k} \cap B(0,\r))} >  k \| \na \f_k  \|_{L^2(\OM_{n_k} \cap B(0,\r))}.\]
Dividing $\f_k$ by $\| \f_k  \|_{L^2(\OM_{n_k} \cap B(0,\r))}$, we can consider that $\| \f_k  \|_{L^2(\OM_{n_k} \cap B(0,\r))}=1$, which implies that $\| \na \f_k  \|_{L^2(\OM_{n_k} \cap B(0,\r))}$ tends to zero as $k$ tends to infinity. Therefore, extracting a subsequence if necessary, we have that
\[ \f_k \to \f \text{ weakly in }H^1(B(0,\r)) \text{ and  strongly in } L^2(B(0,\r)).\]
It follows that $\f$ is a non zero contant on $B(0,\r)$, because $\| \f  \|_{L^2(B(0,\r))}=1$ and $ \na\f_k \rightharpoonup 0$ weakly in $L^2(B(0,\r))$.

We introduce a cutoff function $\chi$ which is equal to zero in $B(0,\r)^c$, and equal to $1$ in some neighborhoods of $O_n$'s and $\mathcal{C}$. Then, 
\begin{equation}\label{eq1}
\chi \f_k \text{ belongs to } H^1_0(B(0,\r) \setminus \overline{O_n}), 
\end{equation}
and 
\begin{equation}\label{eq2}
\chi \f_k \rightharpoonup \chi\f \text{ weakly in }H^1_0(B(0,\r)).
\end{equation}

However, the sequence $(B(0,\r) \setminus \overline{O_n})$ converges to $ B(0,\r) \setminus \mathcal{C}$ in the Hausdorff sense  
and as $\overline{O_n}$ is connected for all $n$, Proposition \ref{prop9}  implies: 
$$\mbox{ $B(0,\r) \setminus \overline{O_n}$ $\gamma$-converges to $ B(0,\r) \setminus \mathcal{C}$.}$$
 Combining \eqref{eq1}, \eqref{eq2} and Proposition \ref{prop10}, we obtain that $\chi\f$ belongs to $H^1_0(B(0,\r) \setminus \mathcal{C})$.

Next, Proposition \ref{prop11} implies that $\chi\f=0$ quasi everywhere in $\mathcal{C}$, i.e. everywhere except on a set of zero capacity. This is in contradiction with $\capa(\mathcal{C})>0$ and the fact that $\chi\f$ is equal to a non zero constant in $\mathcal{C}$. The conclusion of the proof follows.
\end{proof}

We want to apply the previous lemma to \eqref{IPP}, but we remark that an important issue is to control the size of the support of $\om_n$ independently of $n$. As $\om_n$ is transported by $u_n$, we will prove that the velocity is uniformly bounded far from the domains $O_n$.

\subsection{Uniform estimates of the velocity far from the boundaries.} \label{uniformestimates}\ 

The advantage of working outside one simply connected domain is the explicit formula of $\psi^0_n$ and $\psi_n$ in terms of biholomorphisms. 
We denote $D := \{ |z| < 1 \}$ the open unit disk,   $\OM_n:= \R^2\setminus \overline{O_n}$ the approximate exterior domain given by Proposition \ref{approxlemma},  and $\D:= \{ |z| > 1 \}$  the exterior of the closed unit disk.  From the Riemann mapping theorem, it is  easily seen  that there is  a unique  biholomorphism 
$$  \mathcal{T}_n : \Omega_n \mapsto \Delta, \quad \mbox{ with } \:   \mathcal{T}_n(\infty) = \infty, \quad  \mathcal{T}_n'(\infty) > 0. $$
We remind that the last two conditions mean  
$$\mathcal{T}_n(z) \sim \lambda_n z, \quad   |z| \sim +\infty, \quad \mbox{ for some } \:  \lambda_n> 0. $$
With such notations, we have 
\begin{equation}\label{kernel}
\na^\perp \psi^0_n (t,x)= \frac{1}{2\pi} D\Tc_n^T(x) \int_{\OM_n} \Bigl( \frac{\Tc_n(x)-\Tc_n(y)}{|\Tc_n(x)-\Tc_n(y)|^2}-\frac{\Tc_n(x)-\Tc_n(y)^*}{|\Tc_n(x)-\Tc_n(y)^*|^2}\Bigl)^\perp \om_n(t,y)\, dy
\end{equation}
and
\begin{equation}\label{harmonic}
\na^\perp \psi_n (t,x)= \frac{1}{2\pi} D\Tc_n^T(x) \frac{\Tc_n(x)^\perp}{|\Tc_n|^2}
\end{equation}
with the notation $z^*=\dfrac{z}{|z|^2}$ (see e.g. \cite{ift_lop_euler,lac_small} for an introduction to the Biot-Savart law in exterior domains). 

\medskip
Like in \cite{ift_lop_euler,lac_euler,lac_small}, a key point is  to control  $\mathcal{T}_n$ when  $\overline{O_n}$ tends to $\mathcal{C}$.  We state 
\begin{proposition} \label{propconvbiholo}
Let $\Pi$ be the unbounded connected component of $\Omega$. There is a unique biholomorphism $\mathcal{T}$ from $\Pi$ to $\D$, satisfying $\mathcal{T}(\infty)=\infty$, $\mathcal{T}'(\infty) > 0$. Moreover, one has  the following convergence properties:  
\begin{itemize}
\item[i)] $\mathcal{T}_n^{-1}$ converges uniformly locally to $\mathcal{T}^{-1}$ in $\D$. 
\item[ii)] $\mathcal{T}_n$ (resp. $\mathcal{T}_n'$) converges uniformly locally to $\mathcal{T}$ (resp. to $\mathcal{T}'$) in $\Pi$. 
\item[iii)] $|\mathcal{T}_n|$ converges uniformly locally to $1$ in $\Omega\setminus\Pi$.
\end{itemize}
\end{proposition}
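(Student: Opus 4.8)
The plan is to reduce the whole statement to the Carath\'eodory kernel convergence theorem for conformal maps, the single nontrivial input being the identification of the kernel of $(\OM_n)$, with respect to $\infty$, with the unbounded component $\Pi$.

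I would first settle the existence and uniqueness of $\mathcal{T}$ by working on the Riemann sphere $\widehat{\C}=\C\cup\{\infty\}$ and setting $\widehat\Pi:=\Pi\cup\{\infty\}$. The point is that $\widehat\Pi$ is a simply connected domain of $\widehat\C$ whose complement has more than one point. Indeed, $\widehat\C\setminus\widehat\Pi=\R^2\setminus\Pi$ is the union of $\mathcal{C}$ with the bounded components $B$ of $\R^2\setminus\mathcal{C}$; since $\partial B\subset\mathcal{C}$ and $\mathcal{C}$ is connected, each $\mathcal{C}\cup B$ is connected and they all share $\mathcal{C}$, so $\R^2\setminus\Pi$ is connected, which is equivalent to simple connectivity of $\widehat\Pi$. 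Moreover (H2') forces $\capa(\mathcal{C})>0$, so $\mathcal{C}$ is not a single point and the complement is non-degenerate. The Riemann mapping theorem then yields a unique biholomorphism $\mathcal{T}:\widehat\Pi\to\D\cup\{\infty\}$ with $\mathcal{T}(\infty)=\infty$ and $\mathcal{T}'(\infty)>0$ (i.e. $\mathcal{T}(z)\sim\lambda z$, $\lambda>0$), which is the asserted map.

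For \emph{(i)} I would view $g_n:=\mathcal{T}_n^{-1}:\D\to\OM_n$ as conformal maps of the \emph{fixed} domain $\D$ normalized by $g_n(\infty)=\infty$, $g_n'(\infty)=\lambda_n^{-1}>0$, so that by the kernel theorem it suffices to show $(\OM_n)$ converges to $\widehat\Pi$ in the sense of kernels based at $\infty$. The inclusion $\widehat\Pi\subseteq\ker$ uses the \emph{upper} Hausdorff convergence of Proposition \ref{approxlemma}: any compact $K\subset\Pi$ has $d(K,\mathcal{C})>0$, and $\overline{O_n}\subset\mathcal{C}^\eps$ for large $n$ forces $K\cap\overline{O_n}=\emptyset$, i.e. $K\subset\OM_n$. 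The reverse inclusion uses the \emph{lower} Hausdorff convergence $\mathcal{C}\subset(\overline{O_n})^\eps$: it shows that no point of $\mathcal{C}$ has a neighbourhood eventually contained in $\OM_n$, so $\mathcal{C}\cap\ker=\emptyset$; as the kernel is a domain containing $\infty$ and avoiding $\mathcal{C}$, it must lie in $\widehat\Pi$. Since this holds for every subsequence, $\ker=\widehat\Pi$, and Carath\'eodory's theorem gives $g_n\to\mathcal{T}^{-1}$ locally uniformly on $\D$, together with $\lambda_n\to\lambda$. Statement \emph{(ii)} follows by inverting: for $w_0\in\Pi$, set $\zeta_0=\mathcal{T}(w_0)\in\D$; since $g_n\to\mathcal{T}^{-1}$ locally uniformly and $\zeta_0$ is a simple zero of $g-w_0$ (univalence), Hurwitz's theorem produces a unique nearby zero $\zeta_n\to\zeta_0$ of $g_n-w_0$, that is $\mathcal{T}_n(w_0)=\zeta_n\to\zeta_0=\mathcal{T}(w_0)$; uniformity on compact subsets of $\Pi$ comes from the attendant normal-family estimate, and $\mathcal{T}_n'\to\mathcal{T}'$ from Weierstrass' theorem.

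The delicate and most interesting point is \emph{(iii)}, where the limit map $\mathcal{T}$ no longer governs the behaviour of $\mathcal{T}_n$ inside the ``holes''. For a bounded component $B$ of $\OM=\R^2\setminus\mathcal{C}$, each compact $K\subset B$ has $d(K,\mathcal{C})>0$, hence $K\subset\OM_n$ for large $n$. The maps $\mathcal{T}_n$ take values in $\D=\{|z|>1\}$, so by Montel's theorem $\{\mathcal{T}_n\}$ is a normal family on $B$; from any subsequence extract $\mathcal{T}_{n_k}\to h$ locally uniformly with $|h|\ge1$. I claim $|h|\equiv1$: if $|h(x_0)|>1$ for some $x_0\in B$, then $w^*:=h(x_0)\in\D$ and $\mathcal{T}_{n_k}(x_0)\to w^*$; feeding this into the identity $g_{n_k}(\mathcal{T}_{n_k}(x_0))=x_0$ and using the locally uniform convergence $g_{n_k}\to\mathcal{T}^{-1}$ near $w^*$ from \emph{(i)} yields $x_0=\mathcal{T}^{-1}(w^*)\in\Pi$, contradicting $x_0\in B$ (which is disjoint from $\Pi$). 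Hence $|h|\le1$, so $|h|\equiv1$, $h$ is a unimodular constant, and $|\mathcal{T}_{n_k}|\to1$ uniformly on $K$; since every subsequence has such a sub-subsequence, $|\mathcal{T}_n|\to1$ locally uniformly on $\OM\setminus\Pi$. I expect the main obstacle to be the kernel identification of the second paragraph, where both directions of Hausdorff convergence together with the non-degeneracy of $\mathcal{C}$ are essential; the apparent difficulty in \emph{(iii)} is then dissolved precisely by propagating the conclusion of \emph{(i)} back through the relation $g_n\circ\mathcal{T}_n=\mathrm{id}$.
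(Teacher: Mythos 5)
Your proof is correct and follows essentially the same route as the paper: both arguments reduce everything to Carath\'eodory's kernel theorem after identifying the kernel of $(\Omega_n)$ at $\infty$ with $\Pi\cup\{\infty\}$ via the two halves of the Hausdorff convergence, and then propagate (i) to (ii) and (iii) through the identity $\mathcal{T}_n^{-1}\circ\mathcal{T}_n=\mathrm{id}$. The only differences are cosmetic — the paper inverts by $z\mapsto 1/(z-x_n)$ to place the base point at $0$, uses the Cauchy formula on a moving contour instead of Hurwitz for (ii), and for (iii) argues directly with the uniform convergence of $\mathcal{T}_n^{-1}$ on $\{|z|\ge1+\delta\}$ rather than via Montel (where, incidentally, you should also dispose of the spherical limit $h\equiv\infty$, which the same identity excludes since $|\mathcal{T}_n^{-1}(w)|\to\infty$ as $|w|\to\infty$ uniformly in $n$).
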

This proposition is a consequence of the celebrated theorem of Caratheodory on the convergence of biholomorphisms; we refer to Appendix \ref{app_biholo} for all details and proof. From there, we deduce:

\begin{lemma}\label{est kernel}
Let $R_0$ large enough so that  ${\cal C} \subset B(0,R_0)$ and $p\in (2,\infty]$ fixed. Then, there exists $\displaystyle C_0=C(\|\om^0\|_{L^1}, \|\om^0\|_{L^p},R_0,p)$   such that the function
\begin{equation}\label{kernel bis}
f_n(t,x):= \frac{1}{2\pi} D\Tc^T(x) \int_{\Pi} \Bigl( \frac{\Tc(x)-\Tc(y)}{|\Tc(x)-\Tc(y)|^2}-\frac{\Tc(x)-\Tc(y)^*}{|\Tc(x)-\Tc(y)^*|^2}\Bigl)^\perp \om_n(t,y)\, dy
\end{equation}
verifies
\[ \|f_n(t,x) \|_{L^\infty(\R^+\times B(0,R_0)^c)} \leq C_0,\  \forall n.\]
Moreover, for any compact $K$ outside $\overline{B(0,R_0)}$, there exists $N_K$ such that 
\[ \Big\|\na^\perp \psi^0_n (t,x) \Big\|_{L^\infty(\R^+\times K)} \leq 2C_0+1, \ \forall n\geq N_K,\]
where $\na^\perp \psi^0_n$ is given in \eqref{kernel}.
\end{lemma}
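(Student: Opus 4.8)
The plan is to prove the $L^\infty$ bound on $f_n$ by splitting the Biot--Savart type kernel into a near-field and a far-field contribution, exploiting the structure of the fixed biholomorphism $\mathcal{T}$ and the uniform $L^1\cap L^p$ control on the transported vorticity. First I would note that by \eqref{norm om} we have $\|\om_n(t,\cdot)\|_{L^1}\le\|\om^0\|_{L^1}$ and $\|\om_n(t,\cdot)\|_{L^p}\le\|\om^0\|_{L^p}$ uniformly in $t$ and $n$; together with the compact support inside $B(0,R_0)$ (to be justified elsewhere) these are the only features of $\om_n$ I will use. For $x\in B(0,R_0)^c$, by Proposition \ref{propconvbiholo}, $\mathcal{T}(x)$ stays in a region bounded away from the unit circle and $D\mathcal{T}^T(x)$ is bounded, so the prefactor contributes a fixed constant. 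The kernel inside the integral is a difference of two terms: a singular Biot--Savart piece $\frac{\mathcal{T}(x)-\mathcal{T}(y)}{|\mathcal{T}(x)-\mathcal{T}(y)|^2}$ and a smooth ``image charge'' piece involving $\mathcal{T}(y)^*$. I would estimate the image term directly: since $|\mathcal{T}(y)^*|\le 1$ while $|\mathcal{T}(x)|$ is bounded below away from $1$, the denominator $|\mathcal{T}(x)-\mathcal{T}(y)^*|$ is bounded below, so that term is bounded by a constant times $\|\om_n\|_{L^1}$.

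The substance is the singular term. I would change variables or work directly in the physical domain, treating the kernel as a Calder\'on--Zygmund--type singularity of order $-1$ in the variable $\mathcal{T}(x)-\mathcal{T}(y)$. The standard device is to split the $y$-integral into the region where $|\mathcal{T}(x)-\mathcal{T}(y)|\le\delta$ and its complement. On the near region, I use H\"older with the exponent conjugate to $p$: because $p>2$, the singularity $|z|^{-1}$ is locally in $L^{p'}$ over a two-dimensional set (here $p'<2$), so the near contribution is bounded by $\|\om_n\|_{L^p}$ times a finite $L^{p'}$ norm of the kernel over a ball, uniformly in $x$. On the far region, the kernel is bounded by $\delta^{-1}$ and the integral is controlled by $\|\om_n\|_{L^1}$. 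Optimizing $\delta$ (or just fixing it) yields a bound depending only on $\|\om^0\|_{L^1}$, $\|\om^0\|_{L^p}$, $R_0$ and $p$, which is exactly the asserted constant $C_0$. One subtlety is that the change of variables $y\mapsto\mathcal{T}(y)$ introduces the Jacobian $|D\mathcal{T}(y)|^2=|\mathcal{T}'(y)|^2$, which must be controlled; since the relevant $y$ lie in a fixed compact set $\overline{B(0,R_0)}\cap\Pi$ away from the boundary, Proposition \ref{propconvbiholo}(ii) gives uniform bounds on $\mathcal{T}'$ there, so this is harmless.

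For the second assertion, I would compare $\na^\perp\psi^0_n$, given by \eqref{kernel} with the genuine biholomorphisms $\mathcal{T}_n$, against $f_n$, which uses the limiting $\mathcal{T}$. On a compact set $K$ outside $\overline{B(0,R_0)}$, both $x$ and the support of $\om_n$ sit in regions where Proposition \ref{propconvbiholo} gives uniform local convergence of $\mathcal{T}_n\to\mathcal{T}$, $\mathcal{T}_n'\to\mathcal{T}'$, and of $\mathcal{T}_n^{-1}\to\mathcal{T}^{-1}$. Hence the integrand of \eqref{kernel} converges to that of \eqref{kernel bis} uniformly on the relevant compact sets, and the denominators stay bounded below (the singularity is integrable and does not concentrate since $x\in K$ is separated from $\supp\om_n\subset B(0,R_0)$ when $K$ lies outside). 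Therefore $\|\na^\perp\psi^0_n-f_n\|_{L^\infty(\R^+\times K)}\to 0$, so for $n\ge N_K$ it is at most $1$, and combined with the first bound we obtain $\|\na^\perp\psi^0_n\|_{L^\infty(\R^+\times K)}\le 2C_0+1$.

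The main obstacle, and the step requiring the most care, is the uniform estimate of the singular near-field integral: one must verify that the H\"older estimate genuinely gives a constant independent of $n$, $t$, and of the location of $x$ in $B(0,R_0)^c$, which hinges on $p>2$ (so that $p'<2$ and the kernel is locally $L^{p'}$ in two dimensions) and on the uniform control of $\mathcal{T}'$ and its inverse on the fixed compact support region. A secondary technical point is ensuring the far-field/image contributions do not degrade as $|x|\to\infty$; here the behavior $\mathcal{T}(x)\sim\lambda x$ at infinity and the decay properties encoded in \eqref{psi0-ext} guarantee the kernel decays, so the sup over the unbounded set $B(0,R_0)^c$ is still finite.
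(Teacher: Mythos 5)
Your overall strategy (far field controlled by the $L^1$ norm, near field by H\"older using $p>2$, image term bounded since $|\Tc(x)-\Tc(y)^*|$ stays away from $0$) is in the right spirit and is morally the same as the paper's use of the interpolation bound $|\mathcal{I}_2|\le C\|g\|_{L^1}^{\a_p}\|g\|_{L^p}^{1-\a_p}$. However, there are two genuine gaps. First, you assume that $\supp\om_n(t,\cdot)\subset B(0,R_0)$, "to be justified elsewhere". This is both false and circular: the whole point of this lemma is to bound the velocity outside $B(0,R_0)$ so as to then prove (in Subsection \ref{sub support}) that the support of the transported vorticity spreads at most linearly, into $B(0,\r_0+Ct)$; at positive times $\om_n(t,\cdot)$ is \emph{not} supported in $B(0,R_0)$, and $y$ may come arbitrarily close to $x\in B(0,R_0)^c$. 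Consequently your second step collapses: the claim that "the denominators stay bounded below" and that the integrand of \eqref{kernel} converges uniformly to that of \eqref{kernel bis} on $K\times\supp\om_n$ is wrong near the diagonal $y\approx x$, where both kernels are singular. The paper does not compare $\na^\perp\psi^0_n$ with $f_n$ at all; it redoes the full three-part decomposition with $\Tc_n$ in place of $\Tc$ and controls each piece using the locally uniform convergence of $\Tc_n$, $D\Tc_n$, $D\Tc_n^{-1}$ from Proposition \ref{propconvbiholo}.

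Second, your treatment of the Jacobian is based on a false premise: $\overline{B(0,R_0)}\cap\Pi$ is \emph{not} "away from the boundary" --- it contains points arbitrarily close to the rough set $\pd\Pi\subset{\cal C}$, where neither $\Tc'$ nor $\det D\Tc^{-1}$ is controlled (the paper explicitly warns that $D\Tc\notin L^\infty(\Pi)$ for rough boundaries, citing \cite{Mazya,lac_uni}). This is precisely why the paper's decomposition is \emph{spatial}: the region $\Pi\cap B(0,\tilde R_0)$ near the obstacle is handled \emph{without} any change of variables, using only that $\Tc$ is injective and continuous so that $|\Tc(x)-\Tc(y)|\ge\d$ there for $x\in B(0,R_0)^c$; the change of variables $\y=\Tc(y)$ and the bound on $|\det D\Tc^{-1}|$ are used only on $B(0,\tilde R_0)^c$, where the behaviour of $\Tc$ at infinity gives the bound. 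Your splitting by the size of $|\Tc(x)-\Tc(y)|$ could be repaired (for $\d$ small the near region is automatically contained in $B(0,\tilde R_0)^c$), but as written the justification is incorrect, and the second half of the lemma needs a genuinely different argument.
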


\begin{proof}
Let $\tilde R_0 < R_0$ such that ${\cal C} \subset B(0,\tilde R_0)$. 
We decompose the integral \eqref{kernel bis} into three parts:
\begin{eqnarray*}
f_n(t,x) &=& \frac{1}{2\pi} D\Tc^T(x)\Bigl[ \int_{\Pi \cap B(0,\tilde R_0)}\frac{(\Tc(x)-\Tc(y))^\perp}{|\Tc(x)-\Tc(y)|^2} \om_n(t,y)\, dy\\
&& +  \int_{B(0,\tilde R_0)^c } \frac{(\Tc(x)-\Tc(y))^\perp}{|\Tc(x)-\Tc(y)|^2} \om_n(t,y)\, dy - \int_{\Pi} \frac{(\Tc(x)-\Tc(y)^*)^\perp}{|\Tc(x)-\Tc(y)^*|^2} \om_n(t,y)\, dy\Bigl]\\
&=&  \frac{1}{2\pi} D\Tc^T(x) (\mathcal{I}_1 + \mathcal{I}_2 + \mathcal{I}_3).
\end{eqnarray*}

By the definition of $\Tc$ (see Proposition \ref{propconvbiholo}), there exists some $(\b,\tilde\b)\in \R_*^+\times \C$ such that:
\[ \Tc(z)=\b z+ \tilde \b + \mathcal{O}(\frac1z) \text{ as } z\to \infty.\]
Then there exists $C_1$ such that $\| D\Tc \|_{L^\infty(B(0,R_0)^c)}\leq C_1$. If the boundary $\pd \Pi$ is rough, we recall that such an inequality does not hold in $L^\infty(\Pi)$ (see for instance \cite{Mazya,lac_uni}). This remark underlines the importance of $R_0$.

As $\Tc$ is continuous and one-to-one, there exists $\d>0$ such that 
\[ \mathrm{dist} \Bigl( \Tc(\pd B(0,\tilde R_0)) ; \Tc(\pd B(0, R_0)) \Bigl) \geq \d.\]
Then $|\Tc(x)-\Tc(y)|\geq\d$ for any $x\in B(0, R_0)^c$ and $y\in \Pi \cap B(0,\tilde R_0)$. Hence, for any $x\in B(0, R_0)^c$, we have
\[|\mathcal{I}_1 | \leq \frac{1}{\d} \int_{\Pi \cap B(0,\tilde R_0)} |\om_n(t,y)|\, dy \leq \frac{ \| \om^0\|_{L^1}}{\d},\]
where we have used \eqref{norm om}.

As $|\Tc(y)^*|\leq 1\leq |\Tc(y)|$, we also have $|\Tc(x)-\Tc(y)^*|\geq\d$ for any $x\in B(0, R_0)^c$. Therefore, we obtain
\[|\mathcal{I}_3 | \leq \frac{1}{\d} \int_{\Pi} |\om_n(t,y)|\, dy \: \le \:   \frac{ \| \om^0\|_{L^1}}{\d},\]

Concerning the last part $\mathcal{I}_2$, we introduce $z=\Tc(x)$ and 
\[g(t,\y):= \om_n(t,\Tc^{-1}(\y)) | \det D\Tc^{-1}(\y) | \mathbf{1}_{\Tc (B(0,\tilde R_0)^c)}(\y).\]
Changing variables $\y=\Tc(y)$, we compute
\[\mathcal{I}_2 = \int_{\R^2} \frac{(z-\y)^\perp}{|z-\y|^2} g(t,\y)\, d\y.\]
Changing variables back, we obtain that
\[ \| g(t,\cdot) \|_{L^1(\R^2)} = \| \om_n(t,\cdot) \|_{L^1(B(0,\tilde R_0)^c )} \leq \| \om^0 \|_{L^1}.\]
Using the behavior at infinity of $\Tc^{-1}$, we infer that there exists $C_2$ such that
\[ | \det D\Tc^{-1}(\y) | \leq C_2,\ \forall \y \in \Tc (B(0,\tilde R_0)^c),\]
hence
\[ \| g(t,\cdot) \|_{L^p(\R^2)} \leq C_2^{1-1/p} \| \om_n(t,\cdot) \|_{L^p} \le C_2^{1-1/p} \| \om^0 \|_{L^p}.\]
This last argument explains why we split the integral into several parts: we cannot prove that $ \det D\Tc^{-1}$ is bounded up to the boundary, in particular when its boundary is rough. Using a classical estimate for the Biot-Savart kernel in $\R^2$ (see e.g. \cite[Lemma 3.5]{lac_small}), we write
\[|\mathcal{I}_2| \leq C_3 \| g(t,\cdot) \|_{L^1(\R^2)}^{\a_p}\| g(t,\cdot) \|_{L^p(\R^2)}^{1-\a_p} \leq C_3 C_2^{(1-\a_p)(1-1/p)} \| \om^0 \|_{L^1}^{\a_p}\| \om^0 \|_{L^p}^{1-\a_p},\]
where $C_3=C_3(p)$ is a universal constant, and $\a_p\in (0,1)$ (only true if $p\in (2,\infty]$, for example $\a_\infty=1/2$).

Putting $C_0:= \frac{C_1}{2\pi}\Bigl(\frac{ 2\| \om^0\|_{L^1}}{\d}+C_3 C_2^{(1-\a_p)(1-1/p)} \| \om^0 \|_{L^1}^{\a_p}\| \om^0 \|_{L^p}^{1-\a_p}\Bigl)$, we have established the first inequality:
\[ \|f_n(t,x) \|_{L^\infty(\R^+\times B(0,R_0)^c)} \leq C_0,\  \forall n.\]

\bigskip

We treat now $\na^\perp \psi^0_n$. Let $K$ be a compact set in $B(0,R_0)^c$. Let $\tilde K$ a compact set satisfying 
$$K \subset \tilde K \subset B(0,\tilde R_0)^c, \quad \mbox { and } \: \mathrm{dist} \Bigl( \Tc(\pd K) ; \Tc(\pd \tilde K) \Bigl) \ge  \d. $$
One can take for instance $\tilde K = \{ \tilde R_0\le |z| \le R_1\}$ for $R_1$ large enough. Again, we decompose the integral \eqref{kernel} into three parts:
\begin{eqnarray*}
\na^\perp \psi^0_n (t,x) &=& \frac{1}{2\pi} D\Tc_n^T(x)\Bigl[ \int_{\OM_n \setminus \tilde K}\frac{(\Tc_n(x)-\Tc_n(y))^\perp}{|\Tc_n(x)-\Tc_n(y)|^2} \om_n(t,y)\, dy\\
&& +  \int_{\tilde K} \frac{(\Tc_n(x)-\Tc_n(y))^\perp}{|\Tc_n(x)-\Tc_n(y)|^2} \om_n(t,y)\, dy - \int_{\OM_n} \frac{(\Tc_n(x)-\Tc_n(y)^*)^\perp}{|\Tc_n(x)-\Tc_n(y)^*|^2} \om_n(t,y)\, dy\Bigl]\\
&=&  \frac{1}{2\pi} D\Tc_n^T(x) (\mathcal{J}_1 + \mathcal{J}_2 + \mathcal{J}_3).
\end{eqnarray*}

By the uniform convergence of $D\Tc_n$ to $D\Tc$ in $K$ (see Proposition \ref{propconvbiholo}), for any $\e_1>0$ there exists $N_1$ such that 
\[\| D\Tc_n \|_{L^\infty(K)}\leq C_1+\e_1, \ \forall n\geq N_1 .\]

By the uniform convergence of $\Tc_n$ to $\Tc$ in $\tilde K$, there exists $N_2>0$ such that 
\[ \mathrm{dist} \Bigl( \Tc_n(\pd \tilde K) ; \Tc_n(\pd K) \Bigl) \geq \d/2, \ \forall n\geq N_2.\]
Then $|\Tc_n(x)-\Tc_n(y)|\geq \d/2$ for any $x\in K$ and $y\in \OM_n \setminus \tilde K$. Hence, for any $x\in K$, we have
\[|\mathcal{J}_1 | \leq \frac{2}{\d} \int_{\OM_n \setminus \tilde K} |\om_n(t,y)|\, dy \leq \frac{ 2\| \om^0\|_{L^1}}{\d}, \ \forall n\geq N_2.\]

As $|\Tc_n(y)^*|\leq 1\leq |\Tc_n(y)|$, we also have $|\Tc_n(x)-\Tc_n(y)^*|\geq\d/2$ for any $x\in K$. Therefore, we obtain
\[|\mathcal{J}_3 | \leq \frac{2}{\d} \int_{\OM_n} |\om_n(t,y)|\, dy \: \le \:  \frac{2 \| \om^0\|_{L^1}}{\d},\]

Concerning the last part $\mathcal{J}_2$, we introduce $z=\Tc_n(x)$ and 
\[g_n(t,\y):= \om_n(t,\Tc_n^{-1}(\y)) | \det D\Tc_n^{-1}(\y) | \mathbf{1}_{\Tc_n (\tilde K)}(\y).\]
Changing variables $\y=\Tc_n(y)$, we compute
\[\mathcal{J}_2 = \int_{\R^2} \frac{(z-\y)^\perp}{|z-\y|^2} g_n(t,\y)\, d\y.\]
Changing variables back, we obtain that
\[ \| g_n(t,\cdot) \|_{L^1(\R^2)} = \| \om_n(t,\cdot) \|_{L^1(\tilde K )} \leq \| \om^0 \|_{L^1}.\]
Using the uniform convergence of $D\Tc_n^{-1}$ to $D\Tc^{-1}$ in a compact big enough (such that $\Tc_n (\tilde K)\subset D$), for any $\e_3>0$ there exists $N_3$ such that
\[ | \det D\Tc_n^{-1}(\y) | \leq C_2+\e_3,\ \forall \y \in \Tc_n (\tilde K), \ \forall n\geq N_3\]
hence
\[ \| g_n(t,\cdot) \|_{L^p(\R^2)} \leq (C_2+\e_3)^{1-1/p} \| \om_n(t,\cdot) \|_{L^p} \: \le \:  (C_2+\e_3)^{1-1/p} \| \om^0 \|_{L^p}.\]
Finally, we use the classical estimate for the Biot-Savart kernel in $\R^2$:
\[|\mathcal{J}_2| \leq C_3 \| g_n(t,\cdot) \|_{L^1(\R^2)}^{\a_p}\| g_n(t,\cdot) \|_{L^p(\R^2)}^{1-\a_p} \leq C_3 (C_2+\e_3)^{(1-\a_p)(1-1/p)} \| \om^0 \|_{L^1}^{\a_p}\| \om^0 \|_{L^p}^{1-\a_p}.\]

Choosing well $\e_1$ and $\e_3$, we find $N_K=\max(N_1,N_2,N_3)$ such that
\[ \Big\|\na^\perp \psi^0_n (t,x) \Big\|_{L^\infty(\R^+\times K)} \leq 2C_0+1, \ \forall n\geq N_K,\]
which ends the proof.
\end{proof}

The reason why we divide the proof in two parts is to obtain a constant $C_0$ independent of the compact set $K$. Although $N_K$ depends on $K$, the independence of $C_0$ with respect to $K$ will be crucial for the uniform estimate of the vorticity support. The harmonic part is easier to estimate.

\begin{lemma}\label{est harm}
Let $R_0$ a positive number such that ${\cal C} \subset B(0,R_0)$. Then, there exists $C_0=C(R_0)$ such that the function
\begin{equation}\label{harm bis}
\p(x):= \frac{1}{2\pi} \ln |\Tc(x)|
\end{equation}
verifies
\[  \Big\|\na^\perp \psi (x) \Big\|_{L^\infty(B(0,R_0)^c)} \leq C_0.\]
Moreover, for any compact $K$ outside $\overline{B(0,R_0)}$, there exists $N_K$ such that
\[ \Big\|\na^\perp \psi_n (x) \Big\|_{L^\infty(K)} \leq 2C_0+1, \ \forall n\geq N_K.\]
\end{lemma}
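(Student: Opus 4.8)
The plan is to read off $\na^\perp\psi$ from the explicit harmonic formula \eqref{harmonic} and to exploit that, away from the obstacle, $\Tc$ stays strictly outside the closed unit disk, so that the relevant kernel $\Tc(x)^\perp/|\Tc(x)|^2$ never becomes singular. Passing to the limit in \eqref{harmonic} --- or equivalently differentiating $\psi = \frac{1}{2\pi}\ln|\Tc|$ and using the Cauchy--Riemann equations for the holomorphic map $\Tc$ --- I would record
\[
\na^\perp\psi(x) = \frac{1}{2\pi}\, D\Tc^T(x)\,\frac{\Tc(x)^\perp}{|\Tc(x)|^2},
\qquad \Big|\frac{\Tc(x)^\perp}{|\Tc(x)|^2}\Big| = \frac{1}{|\Tc(x)|}.
\]
The whole estimate then reduces to an upper bound on $D\Tc$ and a lower bound on $|\Tc|$ on $B(0,R_0)^c$; no integration against the vorticity is involved, which is why this lemma is much lighter than Lemma \ref{est kernel}.

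For the first inequality, I would reuse verbatim the ingredient from the proof of Lemma \ref{est kernel}: the expansion $\Tc(z) = \b z + \tilde\b + \mathcal{O}(1/z)$ at infinity (Proposition \ref{propconvbiholo}) supplies a constant $C_1$ with $\|D\Tc\|_{L^\infty(B(0,R_0)^c)} \leq C_1$. Since $\Tc$ maps $\Pi$ onto $\Delta = \{|z| > 1\}$ and $\mathcal{C} \subset B(0,R_0)$, one has $B(0,R_0)^c \subset \Pi$ with $|\Tc| > 1$ there; as $|\Tc(x)| \to \infty$ when $|x| \to \infty$ and $\Tc$ is continuous on the compact sphere $\pa B(0,R_0)$, the infimum $m := \inf_{B(0,R_0)^c}|\Tc|$ is strictly larger than $1$. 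Combining the two bounds gives
\[
\|\na^\perp\psi\|_{L^\infty(B(0,R_0)^c)} \leq \frac{C_1}{2\pi m} =: C_0.
\]

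For the second inequality, I would fix a compact $K \subset B(0,R_0)^c$ and invoke the uniform local convergences $\Tc_n \to \Tc$ and $D\Tc_n \to D\Tc$ on $K$ (Proposition \ref{propconvbiholo}): given $\e_1,\e_2 > 0$ there is $N_K$ so that $\|D\Tc_n\|_{L^\infty(K)} \leq C_1 + \e_1$ and $\inf_K|\Tc_n| \geq m - \e_2 > 1$ for $n \geq N_K$. Using the analogue of the formula above for $\psi_n$ straight from \eqref{harmonic}, this yields on $K$
\[
\|\na^\perp\psi_n\|_{L^\infty(K)} \leq \frac{C_1 + \e_1}{2\pi(m - \e_2)},
\]
which is below $2C_0 + 1$ once $\e_1,\e_2$ are taken small enough. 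I do not expect any serious obstacle here: the single delicate point is the strict lower bound $|\Tc| > 1$ off the obstacle together with its stability under $\Tc_n \to \Tc$, and both follow at once from $\Tc$ and $\Tc_n$ being valued in $\Delta$ and from the uniform local convergence of Proposition \ref{propconvbiholo}.
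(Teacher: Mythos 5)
Your argument is correct and is essentially the paper's own proof, merely written out in full: the paper likewise reduces the first bound to the expansion $\Tc(z)=\b z+\tilde\b+\mathcal{O}(1/z)$ at infinity (plus continuity and $|\Tc|>1$ on the compact part of $B(0,R_0)^c$), and obtains the second bound directly from the uniform local convergence of $\Tc_n$ and $\Tc_n'$ in Proposition \ref{propconvbiholo}. No gaps.
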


\begin{proof} The first part comes from the behavior of $\Tc$ at infinity:
\[ \Tc(z)=\b z+ \tilde \b + \mathcal{O}(\frac1z) \text{ as } z\to \infty.\]

The second point is a direct consequence of the uniform convergence of $\mathcal{T}_n$ in $K$ (see Proposition \ref{propconvbiholo}).
\end{proof}

\subsection{Support of the vorticity and $H^1$ estimates}\label{sub support}\ 

Let $\r_0$ such that $\cup_n \supp \om^0_n \cup  \supp \om^0 \cup  \Pi^c \subset B(0,\r_0)$ and $p\in (2,\infty]$ fixed.  Let
$$C_0=C_0(\|\om^0\|_{L^1}, \|\om^0\|_{L^p},\r_0,p)$$
the constant of Lemmata \ref{est kernel} and \ref{est harm}. Let $C := (2 C_0 + 1) (2 + |\alpha|)$, where $\alpha$ was defined in \eqref{definitionalpha2}.    We fix a time $T>0$ and we introduce
\[ K_T := \overline{B(0,\r_0+ C T)}\setminus B(0,\r_0).\]
Together with \eqref{hodge-ext}-\eqref{definitionalpha2}, Lemmata \ref{est kernel} and \ref{est harm} provide some $N_T$ such that
\[ \|u_n \|_{L^\infty(\R^+\times K_T)} \leq C, \ \forall n\geq N_T.\]
As $\om_n$ is transported by $u_n$, we can conclude that
\[ \supp \om_n(t,\cdot) \subset B(0,\r_0+ C t ),\ \forall t\in [0,T],\ \forall n\geq N_T.\]

Finally we can complete the estimate of  $\| \na  \psi^0_n  \|_{L^2(\OM_n)}$. Let $\r_T:=\r_0+ C T$, then Lemma \ref{poincare} implies that there exist $C_{\r_T}$ and $N_{\r_T}$ such that 
\[  \| \p_n^0  \|_{L^2(\OM_n\cap \supp \om_n)} \leq C_{\r_T} \| \na \p_n^0 \|_{L^2(\OM_n \cap B(0,\r_T))}, \ \forall t\in [0,T],\ \forall n\geq \max(N_{\r_T},N_T).\]
Combining with \eqref{IPP} and \eqref{norm om}, we obtain:  $\displaystyle \forall t\in [0,T],\ \forall n\geq \max(N_{\r_T},N_T)$:
\begin{equation}\label{na psi}
 \|  \na  \psi^0_n  \|_{L^2(\OM_n)} \leq C_{\r_T} \| \om_n \|_{L^2(\OM_n)} = C_{\r_T} \| \om_n^0 \|_{L^2(\OM_n)} \leq C_{\r_T} \| \om^0 \|_{L^2}.
\end{equation}
Using again Lemma \ref{poincare} on any compact $K$ of $\R^2$, we conclude that there exist $C_{K}$ and $N_{K}$ such that
\begin{equation}\label{psi loc}
 \| \psi^0_n  \|_{L^2(\OM_n\cap K)} \leq C_K   \| \na \p_n^0 \|_{L^2(\OM_n)}   \leq C_K  C_{\r_T} \| \om^0 \|_{L^2}, \ \forall t\in [0,T],\ \forall n\geq \max(N_{\r_T},N_T,N_K),
\end{equation}
where $C_K$ depends on the diameter of $K$. We recall that $\psi^0_n$ is not square integrable at infinity (see \eqref{psi0-ext}), but \eqref{psi loc} will be sufficient to obtain local convergence.  

\medskip
We end this subsection with a $L^2_{\loc}$ estimate of $\na^\perp \p_n$ up to the boundary. Let $R_0>0$ and $\chi$ be a cutoff function equal to $1$ in $B(0,R_0)$ and to $0$ outside $B(0,R_0 +1)$. Then, $\chi \p_n$ verifies
\begin{equation*}
\Delta (\chi\psi_n) \: = \: \tilde\om_n := 2\na\chi\cdot\na \p_n+\p_n\D\chi \: \mbox{ in } \: \Omega_n\cap B(0,R_0+1), \quad \chi \psi_n\: = \: 0  \: \mbox{ on } \: \pd\Omega_n\cup \pd B(0,R_0+1).
\end{equation*} 
Note that   the connectedness of  $\pa \OM_n$  allows to impose a Dirichlet condition for  $\p_n$ on $\partial \OM_n$. This Dirichlet condition  can also be read from the formula  $\p_n=\frac{1}{2\pi}\ln |\Tc_n(x)|$, as $\Tc_n$ maps $\pd \OM_n$ to $\pd B(0,1)$. Therefore, by a classical energy estimate and Poincar\'e inequality applied in $B(0,R_0+1)$, we obtain that:
\[ \| \na(\chi\p_n) \|_{L^2(\OM_n)}^2 \leq \| \tilde \om_n \|_{L^2(\OM_n)}  \| \chi\p_n \|_{L^2(\OM_n\cap B(0,R_0+1))}\leq C_{R_0} \| \tilde \om_n \|_{L^2(\OM_n)}  \| \na(\chi\p_n) \|_{L^2(\OM_n)}.\]
Using that $\p_n$ and $\na \p_n$ converge uniformly to $\p$ and $\na \p$ in $B(0,R_0+1)\setminus B(0,R_0)$ (see Proposition \ref{propconvbiholo}), we get that $ \| \tilde \om_n \|_{L^2(\OM_n)}$ is uniformly bounded. This yields the existence of a constant $C$, depending only on $R_0$, such that
\begin{equation}\label{L2 harm}
\| \na \p_n \|_{L^2(\OM_n\cap B(0,R_0))} \leq C,\ \forall n.
\end{equation}

\subsection{Conclusion of the proof}\ 

The proof follows the one for bounded open set, taking care of integrability at infinity. We assume here that $p=\infty$, we fix $T>0$ and a compact $K$ in $\R^2$. We denote
\[ D : = K\cup B(0,\r_T),\]
with $\r_T$ defined in the previous subsection.

\bigskip

{\it a) Compactness of the rotational part.}

Extending $\psi^0_n$ by $0$ outside $\Omega$, we deduce from \eqref{na psi} and \eqref{psi loc} that
$$\| \na \psi^0_n \|_{L^2(\R^2)} \:  \le \: C_T, \quad   \| \psi^0_n(t,\cdot) \|_{H^1(D)} \:  \le \:  C_{T,K}  \quad \forall t\in [0,T],\  \forall n\geq N_K. $$

As it regards time derivatives, we observe that $\pa_t \psi^0_n$ satisfies 
\begin{equation*}
\Delta \left( \pa_t \psi^0_n \right) \: = \: \pa_t \omega_n \: =  \: - \div \left( u_n \omega_n\right) \: \mbox{ in } \: \Omega_n, \quad \pa_t \psi^0_n\vert_{\pa \Omega_n} = 0.  
\end{equation*}
Using the uniform $L^\infty([0,T], L^2(B(0,\r_T)))$ bound on $u_n$ (see \eqref{na psi} and \eqref{L2 harm}) and the $L^\infty$ bounds on $\omega_n$, we get
$$  \|  \pa_t \psi^0_n(t,\cdot) \|_{H^1(D)} \: \le \: C,  \quad \forall t\in [0,T],\  \forall n\geq N_K. $$ 
From these bounds and standard compactness lemma, there exists $\psi^0 \in W^{1,\infty}(0,T;  H^1(D))$, with $\na \psi^0 \in L^\infty(0,T; L^2(\R^2))$, 
 such that up to a subsequence: 
$$ \psi^0_{n} \rightarrow \psi^0 \: \mbox{ weakly* in } \: W^{1,\infty}(0,T; H^1(D)) \: \mbox{ and strongly in }  C^0(0,T; L^2(D)). $$
We now extend   $\omega_n$ by $0$ outside $\Omega_n$ for all $n$, so that  the sequence  $\left( \omega_{n} \right)_{n \in \N}$ is bounded in $L^\infty(\R^+; L^1\cap L^\infty(\R^2))$. Up to another extraction,  we deduce  that 
\begin{equation} \label{definitionomegaR2}
 \omega_n \rightarrow \omega \:  \mbox{ weakly *  in } \: L^\infty(\R^+; L^1\cap L^\infty(\R^2)). 
\end{equation}
From the weak convergence of $\psi^0_n$ and $\omega_n$, we infer that 
\begin{equation} \label{limitpsi0 ext}
 \Delta \psi^0(t, \cdot) = \omega(t,\cdot) \: \mbox{ in } \: {\cal D}'(\Omega \cap D), \: \mbox{ for almost every } \: t 
 \end{equation}
using again that any compact subset of $\Omega$ is included in $\Omega_n$ for $n$ large enough. 
 
 \medskip
Now, we use Proposition \ref{prop9}: as $(\Omega_n \cap B(0,\r_T))_{n \in \N}$ converges to $\Omega \cap B(0,\r_T)$ in the Hausdorff sense and  as the complement in a large closed ball $B$  of  $\Omega_n \cap B(0,\r_T)$ has at most $2$ connected components for all $n$, $(\Omega_n\cap B(0,\r_T))_{n \in \N}$ $\gamma$-converges to $\Omega\cap B(0,\r_T)$. Let $\chi$ be a cutoff function equal to $1$ in a neighborhood of the $O_n$'s, and to $0$ outside $B(0,\r_T)$.  By Proposition \ref{prop10},  $\chi \psi^0_n(t, \cdot)$ has for every $t$ a subsequence that converges weakly in $H^1_0(B(0,\r_T))$ to a limit in $H^1_0(\Omega\cap B(0,\r_T))$. Thus, for every $t\in [0,T]$,  $\chi\psi^0(t,\cdot)$ belongs to $H^1_0(\Omega \cap B(0,\r_T))$.  

\medskip
Finally, let us prove the strong convergence of $\psi^0_n$ to $\psi^0$ in $L^2(0,T; H^1(D))$ for all $T >0$. Note that for $R$ large enough, $\psi^0_n$ and $\psi^0$ are harmonic functions outside $D(0,R)$, with $\na \psi^0_n$ and $\na \psi^0$ both square integrable. By standard arguments, it implies 
$\psi^0_n, \psi^0 = cst + {\cal O}(1/|x|)$ and $|\na \psi^0_n|,  | \na \psi^0| = \mathcal{O}(1/|x|^2)$ at infinity. This makes the integrations by parts that follow rigorous. 

Going back to the equation  \eqref{psi0-ext}, we get
\begin{equation*} 
\int_0^T \int_{\R^2} |\na \psi^0_n|^2 =  \int_0^T \int_{\Omega_n}  |\na \psi^0_n|^2 = -\int_0^T \int_{\Omega_n} \omega_n \,   \psi^0_n = \: - \int_0^T \int_{D} \omega_n  \,  \psi^0_n \: \rightarrow - \int_0^T \int_D \omega \, \psi^0 
\end{equation*}
As we know from the previous paragraph  that $\chi \psi^0(t,\cdot)$ belongs to $H^1_0(\Omega\cap B(0,\r_T) )$ for every $t$, we can perform an energy estimate on \eqref{limitpsi0 ext} as well. We get 
$$ \int_0^T \int_{\R^2} |\na \psi^0|^2 =  \int_0^T \int_\Omega |\na \psi^0|^2 = - \int_0^T \int_\Omega \omega \, \psi^0 = - \int_0^T \int_D \omega \, \psi^0  $$
Hence, 
$$ \int_0^T \int_{\R^2} |\na \psi^0_n|^2  \: \rightarrow \:  \int_0^T \int_{\R^2} |\na \psi^0|^2  $$
which together with the weak convergence in $W^{1,\infty}(0,T; H^1(D))$ yields the strong convergence of $\psi^0_n$ to $\psi^0$ in $L^2(0,T; H^1(K))$. 

\begin{remark}
As in the bounded case, it is important that $\om^0$ belongs to $L^\infty$, i.e. $p=\infty$, to get uniform estimates on $\pa_t \psi^0_n$.
\end{remark}

\bigskip

{\it b) Compactness of the harmonic part.}

By the convergence results on $\{ \mathcal{T}_n \}$ (see Proposition \ref{propconvbiholo}), we obtain directly that $\psi_n=\frac{1}{2\pi}\ln |\Tc_n(x)|$ converges uniformly to $\psi$, resp. to $0$,  in any compact subset $K$ of $\Pi$ (the unbounded connected component of $\OM$),  resp.  of $\OM \setminus \Pi$ (the bounded connected components of $\OM$). As $\psi_n - \psi$ is a  harmonic function over $\Omega$, we infer from the mean-value theorem that the convergence in $L^\infty_{\loc}(\OM)$ implies the convergence in $H^1_{\loc}(\OM)$.

\bigskip

{\it c) Limit equation.}

We can now conclude the proof of Theorem \ref{theorem2}. Let $(u_n)_{n \in \N}$ be  the sequence of Euler solutions in $\Omega_n$, associated to the initial data $u_0^n$. Each field $u_n$ has the  Hodge decomposition  \eqref{hodge-ext}.  By diagonal extraction, it  {\em converges strongly in $L^2_{\loc}(\R^+ \times \overline{\OM})$  and weakly* in $L^\infty_{\loc}(\R^+; L^2_{\loc}( \overline{\OM}))$}  to the field
\begin{equation*}
u(t,x) \: = \: 
\left\lbrace\begin{aligned}
&\na^\bot \psi^0(t,x) \: + \: \ \alpha \na^\bot \psi(x), &\text{ if } x\in \Pi,  \\
&\na^\bot \psi^0(t,x), &\text{ if } x\in \OM \setminus \Pi.
\end{aligned}\right.
\end{equation*}
Note that  $\na^\perp \psi^0$ belongs to $L^\infty_{\loc}(\R^+; L^2(\Omega))$ (see \eqref{na psi})  whereas $\na^\perp \psi$ is only locally square integrable. It follows that  $u \in L^\infty_{\loc}(\R^+; L^2_{\loc}( \overline{\OM}))$.   From this explicit form, we deduce that $u$ is divergence free, tangent to the boundary, with a conserved circulation along the closed curve $J$. Moreover, inside $\Omega$, one has   
$$\curl u  \: = \:  \Delta \psi^0 \: =  \: \omega \in L^\infty(\R^+; L^1\cap L^\infty(\OM)).$$
The uniform estimate of the support of $\om_n$ means that $\om$ is also compactly supported. 

\medskip

Finally, let $\varphi \in  {\cal D}\left([0, +\infty), {\cal V}(\Omega)\right)$. For $n$ large enough, the support of $\varphi$ is included in $\Omega_n$ so that:
\begin{equation*} 
  \int_0^{\infty} \int_\Omega \left( u_n \cdot \pd_t \varphi +   (u_n \otimes u_n) : \na \varphi \right)  = -\int_\Omega u^0_n \cdot \varphi(0, \cdot) 
\end{equation*}
By the  strong $L^2_{\loc}$ convergence of $u_n$ to $u$, and also the strong $L^2_{\loc}$  convergence of $u^0_n$ to $u^0$, it follows that $u$ satisfies the weak form of the Euler equations \eqref{Eulerweak}. Note that, as in the case of bounded domains, the strong convergence of $u^0_n$ to $u^0$ relies on the uniqueness of the  Hodge decomposition for irregular open sets (outside one obstacle in this paragraph). For the sake of brevity, we leave to the reader to adapt the argument given in the previous section (see also  \cite[Proposition 2.1]{ift_lop_euler}).  

\medskip
{ Let us emphasize that this convergence in $L^2_{\loc}$ does not hold  for the situation studied in \cite{ift_lop_euler}  (one small obstacle shrinking to a point), or in  \cite{milton} (bounded domain with several holes,  one of them  shrinking to a point). In such  situations, the limit velocity is the sum of a smooth part and a harmonic part $x^{\perp}/|x|^2$, so it  does not even belong to $L^2_{\loc}$. }

\section{Initial vorticity in $L^p$}\label{section5}

We complete  in this section the proof of Theorems \ref{theorem1} and \ref{theorem2}, by treating the case of vorticities in $L^p$ for finite $p$ ($p > 2$ in the case of exterior domains). We do not need here to  introduce approximate domains $\Omega_n$: we regularize only the initial data, and   rely on the existence of weak solutions (in the original domain $\Omega$) for bounded vorticities,  as established in the previous sections.  
\bigskip

\subsection{Theorem \ref{theorem1} for $p>1$}\ 

Let $p > 1$, $u^0$ satisfying \eqref{initialdatabounded}.  Let $\omega^0 := \curl u^0$. We introduce a sequence of smooth functions $(\om_n^0)_{n\in \N}$  such that $\om_n^0 \to \om^0$ strongly in $L^p(\OM)$.  We remind that we have established  in Section \ref{section3} that  there is for each $n$ and each real $k$-uplet $c^1, \dots, c^k$ a unique $u^0_n \in L^2(\Omega)$ satisfying 
$$ \curl u^0_n = \om^0_n, \quad \gamma^i(u^0_n)  = c^i, \quad \forall i=1,\dots, k $$
together with  the divergence-free and tangency conditions. We choose here  $c^i  := \gamma^i(u^0)$.  We then denote by  $u_n$ a weak solution constructed in Section \ref{section3}. We denote $\omega_n := \curl u_n$.  

\medskip
Assuming as in Section \ref{section3}  that $u_n$ is  the limit of  a sequence of smooth solutions $u_{n,N}$ of Euler in smooth domains $\Omega_N$, we notice by \eqref{conservationlp} that:

\begin{equation} \label{borneomegalp}
 \| \om_n \|_{L^\infty(L^p(\OM))} \leq \liminf_{N\to \infty} \| \om_{n,N} \|_{L^\infty(L^p(\Omega_N))} \le  \| \om^0_n \|_{L^p(\OM)} \le C_p. 
 \end{equation}
Then we have, up to a subsequence, the weak $*$ convergence of $\om_n$ to some  $\om$ in $L^{\infty}(\R^+; L^p(\OM))$.

\medskip
Moreover, we proved that the velocity can be written as
\[  u_n(t,x) \: = \: \na^\bot \psi^0_n(t,x) \: + \: \sum_{i=1}^k \alpha^i_n(t) \na^\bot \psi^i(x) \]
where
\[ \psi^0_n \in L^\infty(\R^+;H^1_0(\OM)) \text{ and }  \Delta \psi^0_n(t, \cdot) = \omega_n(t,\cdot) \: \mbox{ in } \: {\cal D}'(\Omega), \: \mbox{ for almost every } \: t;\]
\[ \psi^i \: = \: \sum_{j=1}^k c^{i,j} \, \phi^i \text{ for all } i=1, \dots, k;\] 
\begin{equation}\label{alpha-bis}
 \alpha^i_n \: = \:   \int_{\Omega} \phi^i \, \omega_n \, dx \:  +  \gamma^i,
\end{equation}
with
\[ \phi^i \in  H^1_0(\tilde \OM), \  \Delta \phi^i = 0 \quad \mbox{ in } \: \Omega, \quad \phi^i\vert_{\pa \mathcal{C}^j} = \d_{ij}   \: \mbox{ in a weak sense, see Section \ref{section3}}, \]
and
\[ C = \left( c^{i,j} \right)_{1 \le i,j \le k} = -\left( \int_{\Omega} \na \phi^i \cdot \na \phi^j \right)_{1 \le i,j \le k}^{-1}. \]
 Note that $\phi^i$, $\psi^i$ and $C$ do not depend on $n$.

 \bigskip
 By the energy estimate, we obtain that $\| \na \psi^0_n(t,\cdot) \|_{L^2(\Omega)}^2  \leq   \| \omega_n(t, \cdot) \|_{H^{-1}(\Omega)} \| \psi^0_n(t, \cdot) \|_{H^1(\Omega)}$, which implies by \eqref{borneomegalp} and  the Poincar\'e inequality on a big ball $D$ that $\psi^0_n$ is uniformly  bounded in $L^{\infty}(\R^+; H_0^1(\OM))$. Also by \eqref{borneomegalp}, the sequences $(\alpha^i_n)_{n \in \N}$  are bounded in $L^\infty(\R^+)$.  

\medskip
Therefore, we can write $u_n=\na^\perp \p_n$ with $\p_n$ bounded in $L^\infty(\R^+;H^1(\OM))$ and $u_{n}^0$ bounded in $L^2(\OM)$. By these estimates, we extract a subsequence such that $u_{n}^0\to u^{0}$ weakly in $L^2(\OM)$, $u_n\to u$ weakly* in $L^\infty(\R^+;L^2(\OM))$, which implies that $u$ verifies the divergence-free and tangency conditions \eqref{imperm2}.

\medskip
The last step consists in proving that $u$ verifies the momentum equation \eqref{Eulerweak} for any divergence free test function $\varphi \in {\cal D}\left([0, +\infty) \times \Omega\right)$. Let us fix such a test function, $T$ big enough such that $\varphi(t,\cdot)\equiv 0$ for $t\geq T$. We set $\Omega'$ a smooth set such that $\supp \varphi(t,\cdot)\subset \Omega'\Subset \Omega$ for any $t\in [0,T]$, and we are looking for a subsequence such that we can pass to the limit in the momentum equation. We denote by $\mathbb{P}_{\Omega'}$ the Leray projector onto $\mathcal{H}(\Omega')$ (see \eqref{impermbis} for the definition) and we set
\[ {\cal V}(\Omega'):=\text{completion of } \{ \F \in{\cal D}(\Omega')\ | \ \div \F =0 \} \text{ in the norm of } H^1.
\]
By the standard properties of the Leray projection (see e.g. \cite{Galdi}), we decompose $u_n$ in $\Omega'$ as:
\[
u_n = \mathbb{P}_{\Omega'} u_{n} +\nabla q_{n}.
\]
As the projector is orthogonal in $L^2(\Omega')$, we have that $\mathbb{P}_{\Omega'} u_{n}$ and $\nabla q_{n}$ are uniformly bounded in $L^\infty(\R^+;L^2(\OM'))$, then weakly converge, up to a subsequence, to $\mathbb{P}_{\Omega'} u$ and $\nabla q$, respectively, with $u=\mathbb{P}_{\Omega'} u+\nabla q$. As $\Omega'$ is smooth and $\omega_{n}\in L^\infty(\R^+;L^p(\OM'))$, it comes from Calderon-Zygmund inequality on $\OM'$ that $\mathbb{P}_{\Omega'} u_{n}$ is bounded in $L^\infty(\R^+;W^{1,p}(\Omega'))$. Using the equation verified by $u_{n}$, we compute for any divergence free function $\F \in {\cal D}((0, T) \times \Omega')$:
\begin{eqnarray*}
\langle \pa_{t}\mathbb{P}_{\Omega'} u_{n},\F  \rangle &=& -\int_{0}^T\int_{\Omega'} \mathbb{P}_{\Omega'} u_{n}(t,x) \cdot \partial_{t}\F \, dxdt = -\int_{0}^T\int_{\Omega'}  u_{n}(t,x) \cdot \partial_{t}\F \, dxdt \\
&\leq& \| u_{n}\|_{L^\infty(\R^+;L^2(\OM'))}^2 \| \nabla \F \|_{L^2((0,T)\times \OM')} \sqrt{T}
\end{eqnarray*}
which implies that $\pa_{t}\mathbb{P}_{\Omega'} u_{n}$ is bounded in $L^2((0,T); \cal V'(\Omega'))$. Finally, by the Aubin-Lions lemma, we get the strong compactness of $\mathbb{P}_{\Omega'} u_{n}$ in $L^2((0,T)\times \Omega')$, where we have used that $\cal H(\Omega')$ embeds continuously in $\cal V'(\Omega')$.

Before to pass to the limit, we note the following equality if $\Delta p^h=0$:
\begin{equation}\label{algebra}
\int_{\OM'} \nabla p^h\otimes\nabla p^h : \nabla \varphi  =   -\int_{\Omega'}\left( \frac{1}{2} \na |\na p^h|^2 \cdot \varphi + \Delta p^h \na p^h \cdot \varphi \right)=   0,
\end{equation}
because $\varphi$ is divergence free and compactly supported in $\Omega'$. Such a relation can be applied with $p^h=q_{n}$:
\[
\int_0^{\infty} \int_\Omega \left( u_{n} \cdot \pd_t \varphi +   (\mathbb{P}_{\Omega'} u_{n} \otimes u_{n} + \nabla q_{n} \otimes \mathbb{P}_{\Omega'}  u_{n}) : \na \varphi \right)  = -\int_\Omega u^0_{n} \cdot \varphi(0, \cdot).
\]
which leads to 
\[\int_0^{\infty} \int_\Omega \left( u \cdot \pd_t \varphi +   (\mathbb{P}_{\Omega'}  u \otimes u+ \nabla q \otimes \mathbb{P}_{\Omega'}  u) : \na \varphi \right)  = -\int_\Omega u^0 \cdot \varphi(0, \cdot).
\]
Using \eqref{algebra} with $p^h=q$, we conclude that $u$ verifies the momentum equation \eqref{Eulerweak}, which ends the proof of Theorem \ref{theorem1} for $p\in (1,\infty)$.

\begin{remark}
Actually, the solution constructed with $\curl u^0 \in L^p(\OM)$, $p\in (1,\infty]$, has the following additional properties:
\begin{itemize}
\item the weak circulations are conserved;
\item 
in the special case $p=\infty$,  the momentum equation \eqref{Eulerweak} is verified for all test functions whose support intersects the boundary, i.e. \eqref{Eulerweak} holds for any $\varphi \in {\cal D}\left([0, +\infty) \times \overline{\Omega}\right)$.
\end{itemize}
\end{remark}

\subsection{Theorem \ref{theorem2} for $p>2$}\ 

To go  from $p=\infty$ to $p > 2$, one  follows  the lines of  the bounded case. Let us remark that  for solutions in Theorem \ref{theorem2}, we have that $\a_n(t) = \a_n = \int_{Ext(J)} \om^0_n + \int_J u^0 \cdot \tau$ which tends easily to $\a  :=  \int_{Ext(J)} \om^0 + \int_J u^0 \cdot \tau$.

\medskip

Next we prove the convergence of the rotational part. As Lemma \ref{est kernel} holds for $p\in (2,\infty]$ (the restriction $p > 2$ comes from this lemma), then we control uniformly the size of the support of $\om_n$, and we get a uniform estimate of $\p_n^0$ in $L^\infty((0,T);H^1(D))$, with $D:=K\cup B(0,\r_T)$ (see Subsection \ref{sub support}). Hence, we deduce that $u_n$ converges weak-$*$ to $u$ in $L^\infty(\R^+;L^2(D))$.

\medskip

The last step is the strong convergence of $u_n$, which can be proved exactly with the same arguments as above: on each ${\cal O}\Subset \OM$, decomposing $u_n$ as $\mathbb{P}_{\cal O}u_n+\nabla q_n$.

\section{Final remarks}\label{section6}

\subsection{Domain continuity for Euler}\ 

{ Theorems \ref{theorem1} and \ref{theorem2} yield existence of global weak solutions in singular open set. However, their proof  yields more, namely some domain continuity for the Euler equations. It shows that solutions of Euler in  
$$\Omega_n \: :=   \:  \widetilde \Omega_n \: \setminus \: \left( \cup_{i=1}^k \overline{O_n^i}\right), \quad \mbox{ resp. } \:   \Omega_n \: :=   \:  \R^2 \setminus \overline{O_n}$$ 
converge  to solutions of Euler in 
$$ \Omega\: := \: \widetilde \Omega \: \setminus \: \left( \cup_{i=1}^k {\cal C}^i\right), \quad \mbox{ resp. } \:   \Omega \: :=   \:  \R^2 \setminus {\cal C}.  $$
We discuss here some consequences  of this convergence result.}

\bigskip
{\it Rugosity.} A typical  problem in rugosity theory is the following: let $\OM$ be a smooth domain with a flat wall $y=0$. Let   $\OM_\e$ be obtained from $\OM$ by a boundary perturbation of the form $y=\e^\a \cos(x/\e)$ ($\a>0$ fixed). What is the asymptotic behaviour of the flow  in $\OM_\e$ as $\e\to 0$ ? In the case of viscous flows,  it has been shown that  there is  a drastic  effect of the rugosity at the limit, see \cite{Simon, Bucur}.
In the opposite direction, {\em one can  deduce} from our analysis that such effect does not hold for ideal incompressible flows: the solution $u_\eps$ of the Euler equations on $\OM_\eps$ converges to the solution $u$ of the Euler equations on $\OM$.

\bigskip
{\it Trapping of a flow.} The complements of the domains $\Omega_n$ and $\Omega$ that we consider have the same number of connected components. Thus, the domain continuity that we show does not extend  to  the fusion of two obstacles as in Figure 1. In such a case, we do not pretend that $u_n$ solution in $\OM_n$ (see Figure 1) converges to $u$ solution in $\OM$. Actually, we guess that it does not hold because of the Kelvin's circulation theorem.

\begin{figure}[ht]
\hspace{1,5cm} \includegraphics[height=4.5cm]{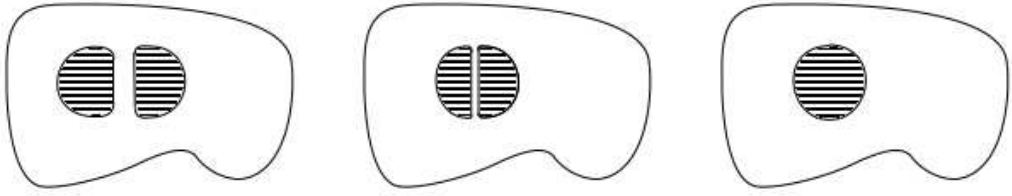}
\caption{fusion of two obstacles.}
 \end{figure}

\medskip
However, an example that we can include in our analysis is an obstacle $\overline{O_n}$ which closes on itself (see Figure 2). In this picture, although $\OM_n$ as a unique connected component,  $\OM$ has several connected components. Here, $\mathcal{C}$ is a Jordan curve, and it is an example of a compact set obtain as a  Hausdorff limit, but not as a decreasing sequence of smooth simply connected domains. In such a setting, the present  work still shows  that $u_n$ solution in $\OM_n$ (see Figure 2) converges to $u$ solution in $\OM$.

\begin{figure}[ht]
\hspace{1,5cm}\includegraphics[height=4.5cm]{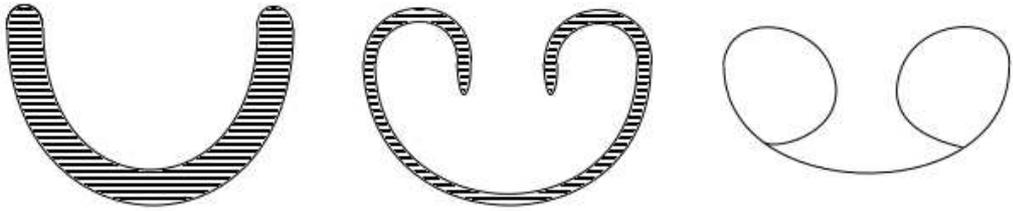}
\caption{$\overline{O_n}$ tends to $\mathcal{C}$ in the Hausdorff sense.}
 \end{figure}

\subsection{The case of the Jordan arc}\

In this subsection, we pay special attention to the  case of a smooth  Jordan arc ${\cal C}$. We shall  denote $0_1$ and  $0_2$ the endpoints of the arc.  As mentioned earlier, this geometry has already been investigated by the second author in \cite{lac_euler}.  In that article, the existence of Yudovich- type solutions  is established through an approximation  by regular domains $\Omega_\eps$. The corresponding regular solutions $u_\eps$ and their curl $\omega_\eps$ are then truncated smoothly over a size  $\eps$ around the obstacle. The resulting truncations 
$\tilde u_\eps$ and $\tilde \omega_\eps$, defined over the whole of $\R^2$,  are shown to converge in appropriate topologies to   the solutions $\tilde u$, $\tilde \omega$  of the system
\begin{equation} \label{vorticityformulation2}
\left\{
\begin{aligned}
& \div \tilde u = 0,    \quad t > 0, \:  x \in \R^2, \\
& \pa_t \tilde \omega + \tilde u \cdot \na  \tilde \omega = 0, \quad t > 0, \:  x \in \R^2, \\
& \tilde \omega \: := \:  \curl \tilde u \:  - \:  g_{\tilde \omega} \delta_{\cal C}, \quad t > 0, \:  x \in \R^2, 
\end{aligned}
\right.
\end{equation}
(plus a circulation condition). This is an Euler like system, modified by a Dirac mass along the arc. The density function $g_{\tilde \omega}$ is given explicitly in terms of $\tilde \omega$ and ${\cal C}$. Moreover, it is shown that it is equal to the jump of the tangential component of the velocity across the arc. We refer to \cite{lac_euler} for all necessary details. Our point in this section is to link this formulation in the whole space to the classical formulation of the Euler equations in $\Omega$, see  \eqref{Eulerweak}-\eqref{imperm}.

\medskip
More precisely,  let  $u$ be the solution of  \eqref{Eulerweak}-\eqref{imperm}  built in Section \ref{section3}, and $\omega := \curl u$. We want to show that extending  $u$ and  $\omega$ by $0$ yields a solution of \eqref{vorticityformulation2} in $\R^2$. 
Therefore, we first notice that these extensions (still defined by $u$ and $\omega$) satisfy 
$$ u \in L^\infty_{\loc}(\R^+; L^2_{\loc}(\R^2)), \quad \omega \in L^\infty(\R^+; L^\infty(\R^2) \cap L^1(\R^2)). $$
It follows easily from  the estimates \eqref{na psi}-\eqref{L2 harm}, and  \eqref{conservationlp}-\eqref{convergenceomegan}.
Then, we remark that  $u= \na^\bot \psi^0 + \alpha \na^\perp \psi$ is  clearly divergence free over the whole of $\R^2$.  

\medskip
We now turn to the transport equation for the vorticity. Taking  $\varphi = \na^\bot \psi$ in \eqref{Eulerweak}, with some $\psi$ compactly supported in $(0,+\infty) \times \Omega$, we first obtain 
\begin{equation} \label{vorticity1}
 \pa_t \omega +   u \cdot \na \omega  = 0, \quad \mbox{ in } \:   (0,+\infty) \times \Omega
 \end{equation}
in the distributional sense.   Let now 
$$\varphi \in {\cal D}\left([0,+\infty) \times (\R^2\setminus\{0_1, 0_2\})\right)$$
be a  scalar test function. We want to prove that 
\begin{equation} \label{vorticity2} 
\int_{\R^+} \int_{\R^2} \pa_t \varphi \,  \omega +  \int_{\R^+} \int_{\R^2}  \na \varphi \cdot (\omega u ) =  \int_{\R^2} \varphi(0,\cdot) \,  \omega^0,
\end{equation}
meaning that the transport equation is satisfied over $\R^2\setminus\{0_1,0_2\}$.  We introduce a curvilinear coordinate $s \in (0,S)$  and a transverse coordinate $r \in  [-R,R] $, so that in a neighborhood of ${\cal C}\setminus\{0_1, 0_2\}$, one has  $x = J(s) \: + \: r \, \nu(s)$, $\nu$ a normal vector field. In view of \eqref{vorticity1}, we can assume with no loss of generality that $\varphi$ is compactly supported in this neighborhood. We then  consider a truncation function  that reads
$$ \varphi_\eps(t,x) = \varphi(t,x) (1-\chi(r/\eps))  $$ 
where $\chi \in C^\infty_c(\R)$,   $\chi \equiv 1$ near $0$. 
One can use $\varphi_\eps$ as a test function in \eqref{vorticity1}. Setting
\[\chi_\eps(x) := \chi(r/\eps),\]
to prove \eqref{vorticity2}, it remains to prove that 
$$ \int_{\R^+} \int_{\R^2} \pa_t \varphi \, \chi_\eps \,  \omega +  \int_{\R^+} \int_{\R^2}  \na (\varphi \, \chi_\eps) \cdot (\omega u )- \int_{\R^2} \varphi(0,\cdot)\, \chi_\eps \,  \omega^0 \rightarrow 0, \quad \mbox{ as } \: \eps \rightarrow 0 . $$
The only difficult term is 
$$ I_\eps \: :=  \: \int_{\R^+} \int_{\R^2}  (\varphi \,  \na \chi_\eps) \cdot (\omega u ).  $$
We remind that the streamfunction $\eta = \psi^0 + \alpha \psi$  associated to  $u$ satisfies $\Delta \eta = \omega$ in $\Omega$, and that it is constant at ${\cal C}$ by the impermeability condition. As $\omega$ is bounded, it follows from elliptic regularity that $\eta$ has $W^{2,p}$ regularity for all finite $p$ on each side of ${\cal C}$, {\em away from the endpoints $0_1, 0_2$}. In particular, one has 
\begin{equation} \label{uw1p}
\| u \|_{W^{1,p}(K_\eps)} \: \le \:  C_p 
\end{equation}
over the support $K_\eps$ of $\varphi  \na \chi_\eps$. Denoting $u_\nu(x) := u(x) \cdot \nu (s)$ the ``normal'' component of $u$, one has 
\begin{align*}
| I_\eps  | \: & \le \:  C \, \int_{K_\eps} \frac{1}{\eps} |\chi'(r/\eps)| \, |u_\nu (x)| \, dx \: \le \: C \,  \int_{K_\eps} \frac{r}{\eps} |\chi'(r/\eps)| \, \frac{|u_\nu (x)|}{r}  \, dx  \\
& \le \: C \sup_\theta \left(\theta \, |\chi'(\theta)|\right)  \,   \int_{K_\eps} \frac{|u_\nu (x)|}{r} \, dx\: \le \: C' \sqrt{ \int_{K_\eps} \, dx } \,  \sqrt{ \int_{K_\eps}  | \na u_\nu (x)|^2 \, dx}  
\end{align*}
where the last bound comes from the  Hardy inequality, applied on each side of ${\cal C}$ to $u_\nu $ (which vanishes at ${\cal C}$ by the impermeability condition).   It follows from \eqref{uw1p} that $I_\eps$ vanishes to zero with $\eps$, as expected.  

\medskip
Thus, to establish the transport equation for the vorticity on the whole plane, it remains to handle the neighborhood of the endpoints $0_1, 0_2$,  say $0_1$.  This time, we introduce the truncation
$$\chi_\eps(x) := \chi\left(\frac{x-0_1}{\eps}\right) \mbox{ \:  with $\chi \in C^\infty_c(\R^2)$, $\: \chi=1$  near $0$.}$$ 
As before, one is left with showing that 
$$ I_\eps \: :=  \: \int_{\R^+} \int_{\R^2}  (\varphi \,  \na \chi_\eps) \cdot (\omega u )   $$
goes to zero with $\eps$. But this time, as $\na \chi_\eps$ is uniformly bounded in $L^2$, one has  the  simple inequality 
$$ | I_\eps | \: \le \: C \,  \| \na \chi_\eps \| \, \| \omega u \|_{L^2(K_\eps)} \: \le \: C'   \| \omega u \|_{L^2(K_\eps)}   $$
where $K_\eps$ is the support of $\chi_\eps$. The r.h.s. goes to zero by  Lebesgue dominated convergence theorem, and yields the result. 

\bigskip
 {\em Eventually, we have  to establish the third line of \eqref{vorticityformulation2}}, which expresses $\omega$ in terms of 
$u$ and a Dirac mass along the arc. Again, we notice  that the streamfunction $\eta$ has $W^{2,p}$ regularity for all finite $p$ on each side of the arc, away from its endpoints. This implies that the velocity $u$ has a trace from each side of the arc, denoted by  $u_{\pm}$.  These traces belong to $W^{1-1/p,p}_{\loc}(int({\cal C}))$ for any finite $p$. By the impermeability condition, only the tangential component of these traces is non-zero. 
Let now  $\varphi \in  C^\infty_c(\R^2\setminus\{0_1,0_2\})$ a scalar test function. Testing this function with the relation $\omega = \curl u$ (that clearly holds in the strong sense in $\R^2\setminus{\cal C}$), and integrating by parts on each side of the arc, we end up with 
$$ \int_{\R^2} \omega \, \varphi \: = \: -\int_{\R^2} u \cdot \na^\bot \phi  \: + \: \int_{{\cal C}} [ u_\tau ] \varphi,  $$
almost surely in $t$,  where  {\em $[ u_\tau ]$ denotes the jump of the tangential component}: if $n$ is the normal going from side $+$ to side $-$, $[ u_\tau ] \: := \: (u_+ - u_-) \cdot \nu  ^\bot$. The last equation can be written 
$$ \omega \: = \: \curl u \: - \: g_\omega \delta_{\cal C}   \:  \mbox{ in } \R^2\setminus\{0_1,0_2\}   $$
in the sense of distributions, where  $g_\omega(s) := [ u_\tau ](s)$ ($s$ the curvilinear coordinate). 

\medskip
The last step is to go from $\R^2\setminus\{0_1,0_2\}$ to $\R^2$.  Therefore, we introduce again truncation functions near the endpoints: say
$$\chi_\eps(x) := \chi\left(\frac{x-0_1}{\eps}\right) \mbox{ \:  with $\chi \in C^\infty_c(\R^2)$, $\: \chi=1$  near $0$.}$$ 
As before, the point is to show that 
$$\int_{\R^2} \omega \, \varphi \chi_\eps, \quad \int_\R^2 u \cdot \na^\bot \phi \chi_\eps, \quad \mbox{ and } \int_{{\cal C}} [ u_\tau ] \varphi \chi_\eps $$
all go to zero with $\eps$. The only annoying quantity is the third one: it requires  a control on the jump function $[ u_\tau ]$ {\em up to the endpoint $0_1$}. Therefore, we use  results related to elliptic equations in polygons, see \cite{Kondra,Mazya}. Indeed, up to a smooth change of variable, the Laplace equation for $\eta$ in  $\R^2\setminus{\cal C}$ turns into a divergence form elliptic equation in the exterior of a slit.  In particular, it follows from the results in \cite{Kondra,Mazya} that $u = \na^\bot \eta$ decomposes into $u_1 + u_2$, where $u_1$ behaves like $1/|x - 0_i|^{1/2}$ near the endpoint $0_i$, and $u_2$ has $W^{1,p}_{\loc}(\R^2\setminus{\cal C})$ regularity  for all $p < 2$. This allows to conclude that  $\int_{{\cal C}} [ u_\tau ] \varphi \chi_\eps$ goes to zero with $\eps$. This concludes the proof. 

\subsection{Extension to Delort's solutions} \ 

Looking closer at the proof of Theorem \ref{theorem1} for general $p > 1$ (see Section \ref{section5}), we see that uniform bounds on the field $u_n$ in $L^\infty L^2$ only require uniform bounds on $\curl u^0_n$ in $H^{-1} \cap L^1$. From there,  one can recover the appropriate initial data, tangency condition and divergence-free condition. Moreover,  the obtention of the Euler equation \eqref{Eulerweak} on the limit $u$ relies on local properties away from the boundary. Hence, one can replace  our compactness (Aubin-Lions) arguments by the analysis led by Delort in \cite[section 2.3, p582]{delort}. Consequently, it is possible to obtain an analogue of Delort's theorem (solutions with initial vorticity in $H^{-1}_{comp}(\R^2) \cap \mathcal{M}(\R^2)$ of definite sign)   in our singular domains.

\bigskip

\noindent
 {\bf Acknowledgements.} The first author is partially supported by the Agence Nationale de la Recherche, Project RUGO,  grant ANR-08-JCJC0104. The second author is partially supported by the Agence Nationale de la Recherche, Project MathOc\'ean, grant ANR-08-BLAN-0301-01. The authors are partially supported by the Project ``Instabilities in Hydrodynamics'' financed by Paris city hall (program ``Emergences'') and the Fondation Sciences Math\'ematiques de Paris.{ The authors are also grateful to Michel Pierre for hints on Proposition \ref{approxlemma}.}

\appendix 
\section{Sobolev capacity}  \label{app_capacity}
We recall here basic notions on Sobolev capacity, taken from \cite{henrot}. Let $E \subset \R^N$, $\: N \ge 1$. The capacity of $E$ (with respect to the Sobolev space $H^1(\R^N)$) is defined by 
$$ \capa(E) \: := \: \inf \{ \| v \|^2_{H^1(\R^N)}, \: v \ge 1 \: \mbox{ a.e.    in a neighborhood of } E\},  
$$
with the convention that $\capa(E)= +\infty$ when the set at the r.h.s. is empty. 
 The capacity is not a measure, but has similar good properties: 
 \begin{proposition} \label{prop5} \
 \begin{enumerate}
 \item $A \subset B \: \Rightarrow \: \capa(A) \le \capa(B). $
 \item Let $(K_n)_{n \in \N}$ a decreasing sequence of compact sets, with $K = \cap K_n$. Then, \\
 $ \capa(K)  = \lim \capa(K_n). $
 \item Let $(E_n)_{n \in \N}$ an increasing sequence of sets, with $E = \cup E_n$. Then, \\ 
 $\capa(E)  = \lim \capa(E_n). $
 \item (Strong subadditivity) For all sets $A$ and $B$, one has 
 $$ \capa(A \cup B) + \capa(A \cap B) \: \le \: \capa(A) + \capa(B). $$
 \end{enumerate}
 \end{proposition}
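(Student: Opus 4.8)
The plan is to verify the four items in the order (1), (4), (2), (3), since the first two are purely algebraic while the last two are monotone-limit statements, item (3) being by far the most delicate. Throughout I write $\mathcal{A}(E)$ for the admissible class $\{v \in H^1(\R^N) \ : \ v \ge 1 \text{ a.e. on some open neighbourhood of } E\}$, so that $\capa(E) = \inf_{v \in \mathcal{A}(E)} \|v\|_{H^1}^2$. For (1), I would simply observe that if $A \subset B$ then every open neighbourhood of $B$ is an open neighbourhood of $A$; hence $\mathcal{A}(B) \subset \mathcal{A}(A)$, and taking the infimum over the larger class can only decrease the value, giving $\capa(A) \le \capa(B)$. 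For (4), the key is the stability of $H^1$ under the lattice operations together with the a.e. pointwise identities $|\max(u,v)|^2 + |\min(u,v)|^2 = |u|^2 + |v|^2$ and $|\na\max(u,v)|^2 + |\na\min(u,v)|^2 = |\na u|^2 + |\na v|^2$ (recall $\na\max(u,v) = \mathbf{1}_{u\ge v}\na u + \mathbf{1}_{u<v}\na v$). Integrating yields $\|\max(u,v)\|_{H^1}^2 + \|\min(u,v)\|_{H^1}^2 = \|u\|_{H^1}^2 + \|v\|_{H^1}^2$. Given near-optimal $u \in \mathcal{A}(A)$, $v \in \mathcal{A}(B)$ with open witnessing neighbourhoods $\omega_A,\omega_B$, one checks $\max(u,v) \ge 1$ a.e. on $\omega_A \cup \omega_B \supset A\cup B$ and $\min(u,v)\ge 1$ a.e. on $\omega_A \cap \omega_B \supset A\cap B$, so these belong to $\mathcal{A}(A\cup B)$ and $\mathcal{A}(A\cap B)$ respectively; combining with the norm identity and letting the optimization error go to $0$ gives the strong subadditivity.

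For (2), monotonicity gives $\capa(K)\le\capa(K_n)$ and the sequence $\capa(K_n)$ is non-increasing, so $\capa(K)\le\lim\capa(K_n)$. For the reverse inequality I would exploit compactness: fix $\eps>0$ and a near-optimal $v\in\mathcal{A}(K)$ with $v\ge 1$ a.e. on an open set $\omega\supset K$ and $\|v\|_{H^1}^2\le\capa(K)+\eps$. The sets $K_n\setminus\omega$ form a decreasing sequence of compacts with empty intersection (since $\cap_n K_n = K\subset\omega$), so by the finite-intersection property $K_n\subset\omega$ for all $n$ large; for such $n$ the same $v$ lies in $\mathcal{A}(K_n)$, whence $\capa(K_n)\le\capa(K)+\eps$. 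Letting $n\to\infty$ and then $\eps\to 0$ closes the argument. Here the neighbourhood formulation of the capacity is precisely what makes the compactness trick work.

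Item (3) is where the real work lies, and I expect it to be the main obstacle. The inequality $\lim\capa(E_n)\le\capa(E)$ is again immediate from (1); the content is $\capa(E)\le L := \lim_n\capa(E_n)$, which I may assume finite. I would select capacitary near-potentials $u_n\in\mathcal{A}(E_n)$, truncated so that $0\le u_n\le 1$, with $\|u_n\|_{H^1}^2\le\capa(E_n)+2^{-n}$. Because the sets increase, for $n\le m$ the average $\frac12(u_n+u_m)$ is still $\ge 1$ on a neighbourhood of $E_n$, hence lies in $\mathcal{A}(E_n)$; the parallelogram law in the Hilbert space $H^1$ then gives $\bigl\|\frac12(u_n-u_m)\bigr\|_{H^1}^2 \le \frac12\|u_n\|_{H^1}^2 + \frac12\|u_m\|_{H^1}^2 - \capa(E_n) \to 0$, so $(u_n)$ is Cauchy and converges strongly to some $u$ with $\|u\|_{H^1}^2 = L$. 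The delicate point is to certify that this limit is admissible for $E$: strong $H^1$ convergence only yields $u\ge 1$ a.e. on $\liminf_n\omega_n$, a set that contains $E$ but need not be an \emph{open} neighbourhood of it. To bridge this gap I would pass to the quasi-continuous representative and use that strong $H^1$ convergence forces quasi-everywhere convergence along a subsequence, so that $\tilde u\ge 1$ quasi-everywhere on $E$; invoking the equivalence (established in \cite{henrot}) between the neighbourhood definition and the quasi-everywhere definition of capacity then gives $\capa(E)\le\|u\|_{H^1}^2 = L$. This reliance on the fine/quasi-continuity theory is the only place where more than elementary Hilbert-space and compactness arguments are needed.
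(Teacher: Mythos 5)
The paper states this proposition without proof, as background recalled from \cite{henrot}, and your argument reproduces the standard development given there: shrinking of the admissible class for monotonicity, the lattice identity $\|\max(u,v)\|_{H^1}^2+\|\min(u,v)\|_{H^1}^2=\|u\|_{H^1}^2+\|v\|_{H^1}^2$ for strong subadditivity, the finite-intersection property of compacts for item (2), and the parallelogram-law/quasi-continuity argument for item (3). Your one external invocation — the equivalence of the neighbourhood and quasi-everywhere formulations of $\capa$ — is legitimate and non-circular here, since its proof in \cite{henrot} rests on quasi-continuity and countable subadditivity for \emph{open} sets (consequences of items (1) and (4) only) and does not use item (3).
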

 
 \medskip
 \noindent If $D$ is a bounded open set of $\R^N$, one can also  define a capacity relatively to $D$: for 
$E \subset D$, 
$$ \capa_D(E) \: := \: \inf \left\{ \| \na v \|^2_{L^2(D)}, \: v \in  H^1_0(D),  \: v \ge 1 \mbox{ a.e.    in a neighborhood of } E \right\},  $$
with the same convention as before.  It is clear from this definition and the Poincar\'e inequality that 
 $ \capa(E) \: \le \: C \, \capa_D(E)$.

\medskip
For nice sets  $E$ in $\R^N$, the capacity of $E$ can be thought very roughly as some $n-1$ dimensional Hausdorff measure of its boundary. More precisely:
\begin{proposition} \label{prop6}\ 
\begin{enumerate}
\item For all compact  set $K$ included in a bounded open set $D$, \\ 
$\: \capa(K) = \capa(\pa K)$.
\item If $E \subset \R^N$ is contained in a manifold of dimension $N-2$, then $\capa(E) = 0$.
\item  If $E \subset \R^N$ contains a piece of some smooth hypersurface (manifold of dimension N-1), then $\capa(E) > 0$. 
\end{enumerate}
\end{proposition}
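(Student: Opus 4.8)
The plan is to reduce all three assertions to an explicit computation on a flat local model, using only monotonicity (Proposition \ref{prop5}(1)), subadditivity (which follows from Proposition \ref{prop5}(4) together with $\capa(A\cap B)\ge 0$), and continuity along increasing unions (Proposition \ref{prop5}(3)). The governing heuristic is that an $H^1$ capacitary potential can concentrate on a set of codimension at least $2$ at vanishing energy cost, whereas it cannot do so across a codimension-$1$ surface; parts (2) and (3) sit precisely on either side of this threshold, and this is what will make the capacity vanish in the first case and stay positive in the second.

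For part (1), the inequality $\capa(\pa K)\le \capa(K)$ is immediate from $\pa K\subset K$ and monotonicity. For the reverse, I would start from a near-optimal competitor $v$ for $\capa(\pa K)$, so $v\ge 1$ on an open neighborhood $U$ of $\pa K$. Replacing $v$ by $\max(0,\min(v,1))$ does not increase $\|v\|_{H^1}$ and preserves the set $\{v\ge 1\}\supset U$, so one may assume $0\le v\le 1$ and $v\equiv 1$ on $U$. I then fill the interior, setting $w:=1$ on $\inte K$ and $w:=v$ elsewhere. The two definitions agree on $U\setminus \inte K$, where $v\equiv 1$, and the gluing set $\pa(\inte K)\subset\pa K\subset U$ lies where both equal $1$; hence $w\in H^1$ with $w\ge 1$ on the open neighborhood $\inte K\cup U\supset K$, so $w$ is admissible for $K$. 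Since $w$ is \emph{constant} on $\inte K$, its gradient vanishes there and $\|\na w\|_{L^2}\le\|\na v\|_{L^2}$, giving $\capa(K)\le\capa(\pa K)$. The one delicate point is exactly that the filling does not raise the \emph{gradient} energy (it may raise the zeroth-order term), which is why the equality is read for the Dirichlet part of the capacity.

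For part (2), by monotonicity it suffices to treat a bounded piece of the $(N-2)$-manifold, and by Proposition \ref{prop5}(3), writing $E=\cup_n\bigl(E\cap\overline{B(0,n)}\bigr)$, one may assume $E$ bounded. Covering that piece by finitely many smooth charts, using that a diffeomorphism changes $\capa$ only by a bounded factor (push-forward of a vanishing-energy sequence stays vanishing-energy), and invoking subadditivity, I reduce to the flat model $E_0=\{0\}_{\R^2}\times Q$ with $Q\subset\R^{N-2}$ a cube. Writing $x=(y,z)$ with $y\in\R^2$ normal and $z\in\R^{N-2}$ tangential, I take $v_\eps(y,z)=\theta(z)\,g_\eps(|y|)$, where $\theta$ is a fixed cutoff equal to $1$ on $Q$ and $g_\eps$ is the logarithmic capacitary profile of a point in the plane,
\[ g_\eps(\r):=\begin{cases} 1, & \r\le\eps,\\ \dfrac{\ln(\d/\r)}{\ln(\d/\eps)}, & \eps\le\r\le\d,\\ 0, & \r\ge\d. \end{cases} \]
The classical estimate gives $\int_{\R^2}|\na g_\eps|^2\sim 1/\ln(\d/\eps)\to 0$ and $\int_{\R^2}g_\eps^2\to 0$, so $\|v_\eps\|_{H^1(\R^N)}\to 0$ while $v_\eps\ge 1$ near $E_0$. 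Hence $\capa(E_0)=0$, and unwinding the reductions yields $\capa(E)=0$.

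For part (3), by monotonicity it suffices to show $\capa(\Gamma)>0$ for a bounded smooth piece $\Gamma$ of the hypersurface. The decisive tool is the trace theorem: for such $\Gamma$ the trace map $H^1(\R^N)\to L^2(\Gamma)$ is bounded, $\int_\Gamma|v|^2\,d\sigma\le C^2\|v\|_{H^1(\R^N)}^2$, proved by flattening $\Gamma$ and integrating the one-dimensional bound $|u(0)|^2\le C\int(|u|^2+|u'|^2)$ over the tangential variables. Any admissible $v$ for $\capa(\Gamma)$ has $v\ge 1$ on a neighborhood of $\Gamma$, so its trace satisfies $\int_\Gamma|v|^2\,d\sigma\ge\sigma(\Gamma)>0$; combining the two gives $\|v\|_{H^1}^2\ge\sigma(\Gamma)/C^2$, a uniform positive lower bound over all competitors, whence $\capa(\Gamma)\ge\sigma(\Gamma)/C^2>0$. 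I expect the main obstacle to be the rigorous passage from the flat models to a general manifold in parts (2) and (3): one must verify that smooth charts distort the capacity only boundedly, so that the vanishing (resp. positive lower) bound survives, and then assemble the local pieces via subadditivity and continuity from below. The analytic cores — the logarithmic energy estimate in codimension $2$ and the trace inequality in codimension $1$ — are standard; it is this localization and patching bookkeeping, together with the care in part (1) that filling the interior does not increase the gradient energy, that demand the most attention.
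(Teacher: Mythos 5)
The paper offers no proof of Proposition \ref{prop6}: it is recalled as background from \cite{henrot}, so your argument can only be judged on its own terms. Parts (2) and (3) are correct and are exactly the standard arguments: the logarithmic profile $g_\eps$ gives $\int_{\R^2}|\na g_\eps|^2=2\pi/\ln(\d/\eps)\to 0$ and $\int_{\R^2}g_\eps^2\to 0$ by dominated convergence, so the flat codimension-$2$ model has zero capacity, and the trace inequality gives the uniform lower bound $\|v\|_{H^1}^2\ge \sigma(\Gamma)/C^2$ in codimension $1$. The chart-and-patching reduction is routine (for an unbounded or badly embedded manifold one should invoke countable rather than finite subadditivity, obtained from Proposition \ref{prop5}(3)-(4), but this is bookkeeping, as you say).

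Part (1) deserves more than the parenthetical caveat you give it. Your filling construction ($w:=1$ on $\inte K$, $w:=v$ elsewhere, glued across the open set $U\supset\pa(\inte K)$ where both equal $1$) is the correct and standard proof, but it only controls $\|\na w\|_{L^2}$, while the capacity defined at the top of Appendix \ref{app_capacity} is the full $H^1(\R^N)$ norm. This is not a removable technicality: for the full $H^1$ capacity the identity $\capa(K)=\capa(\pa K)$ is \emph{false}. Indeed, any competitor for $\overline{B(0,R)}$ satisfies $v\ge 1$ a.e. on $B(0,R)$, hence $\|v\|_{H^1}^2\ge |B(0,R)|\sim R^N$, whereas a competitor for the sphere $\pa B(0,R)$ equal to $1$ on a thin annulus and decaying in a boundary layer of width $O(1)$ on both sides has energy $O(R^{N-1})$; for $R$ large these are incompatible. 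The statement is true, and your proof is complete, for the relative capacity $\capa_D$ (where only $\|\na v\|_{L^2(D)}$ is minimized) --- note that your $w$ stays in $H^1_0(D)$ since $w-v=(1-v)\chi_{\inte K}$ is supported in $K\Subset D$. This is evidently what the otherwise unused hypothesis ``$K$ included in a bounded open set $D$'' in the statement is pointing at, and it is the version that matters in the body of the paper (the contradiction argument of Section \ref{subsectionharm} works with $\capa_D$). So: state explicitly that you are proving $\capa_D(K)=\capa_D(\pa K)$, rather than leaving the discrepancy as a remark about ``which part of the capacity the equality is read for.''
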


The last result concerns $H^1_0(\OM)$. When $\OM$ is a smooth open set,  $H^1_0(\OM)$ can be defined as the set of function in $H^1(\R^2)$ which are equal to zero almost everywhere in $\R^2\setminus \OM$. But this result does not hold  for general open sets $\OM$.  To generalize such a characterization, the notion of capacity is appropriate.
\begin{proposition}\label{prop11}
Let $D$ and $\OM$ be open sets such that $\OM \subset D$. Then
\[ \Bigl( v\in H^1_0(\OM)\Bigl) \iff \Bigl( v\in H^1_0(D) \text{ and } v = 0 \text{ quasi everywhere in } D\setminus \OM \Bigl),\]
which means that $v=0$ except on a set with zero capacity.
\end{proposition}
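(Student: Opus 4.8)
The plan is to deduce both implications from the quasi-continuous representative of an $H^1$ function, using two standard facts from capacity theory available in \cite{henrot}: every $v\in H^1_0(D)$ admits a quasi-continuous representative $\tilde v$, unique up to equality quasi everywhere; and if $v_n\to v$ in $H^1_0(D)$, then a subsequence of the $\tilde v_n$ converges to $\tilde v$ quasi everywhere. Throughout I identify a function on $\OM$ (resp. $D$) with its extension by zero, so that $H^1_0(\OM)$ is isometrically and closedly embedded in $H^1_0(D)$.

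\emph{Forward implication.} First I would take $v\in H^1_0(\OM)$ and a sequence $\varphi_n\in{\cal D}(\OM)$ with $\varphi_n\to v$ in $H^1$. Since $\OM\subset D$ and each $\varphi_n$ has compact support in $\OM$, the zero-extensions lie in ${\cal D}(D)$, whence $v\in H^1_0(D)$. Passing to a subsequence, $\varphi_n\to\tilde v$ quasi everywhere; as every $\varphi_n$ is continuous and vanishes on $D\setminus\OM$, I conclude $\tilde v=0$ quasi everywhere on $D\setminus\OM$. This direction is routine.

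\emph{Backward implication.} For the converse, starting from $v\in H^1_0(D)$ with $\tilde v=0$ quasi everywhere on $D\setminus\OM$, I would first reduce to a bounded nonnegative function: writing $v=v^+-v^-$ with $\widetilde{v^\pm}=(\tilde v)^\pm$, then truncating at level $M$, and using that $H^1_0(\OM)$ is closed in $H^1_0(D)$ and that truncation is continuous on $H^1$, I reduce to $0\le v\le M$ with $\tilde v=0$ quasi everywhere on $\R^N\setminus\OM$. Replacing $v$ by $(v-\eps)^+$, which tends to $v$ in $H^1$ as $\eps\to0$, I may moreover assume $\tilde v$ vanishes quasi everywhere on the quasi-closed set $\{\tilde v\le\eps\}$, which contains $\R^N\setminus\OM$ off a set of zero capacity. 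The core step is then to approximate such a $v$ in $H^1$ by elements of ${\cal D}(\OM)$: fixing $\eta>0$, quasi-continuity provides an open set $G$ with $\capa(G)<\eta$ off which $v$ is continuous, and into which I absorb the capacity-zero exceptional set where $\tilde v\neq0$ on $\R^N\setminus\OM$; multiplying $v$ by $1-p_G$, with $p_G$ the capacitary potential of $G$ ($0\le p_G\le1$, $p_G=1$ quasi everywhere on $G$, $\|p_G\|_{H^1}^2$ small with $\eta$), yields functions converging to $v$ in $H^1$ as $\eta\to0$, whose representatives vanish on $G$ and coincide with the continuous function $v$ off $G$. On these, a standard mollification combined with a spatial cutoff produces approximants in ${\cal D}(\OM)$, and $\eta\to0$ gives $v\in H^1_0(\OM)$.

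\emph{Main obstacle.} I expect the delicate point to be precisely this last construction. The hypothesis only furnishes $\tilde v=0$ quasi everywhere on $\R^N\setminus\OM$, not vanishing of $v$ on an honest neighborhood of $\R^N\setminus\OM$, and the exceptional capacity-zero set may cluster against $\pa\OM$. Bridging this gap — converting quasi-everywhere vanishing into a support compactly contained in $\OM$, so that mollification remains inside ${\cal D}(\OM)$ — is where the capacitary-potential cutoff $1-p_G$ and the behaviour of capacity on $\R^N\setminus\OM$ are essential, and it is the heart of the argument; by comparison the reductions and the forward implication are elementary.
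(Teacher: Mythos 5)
The paper does not prove Proposition \ref{prop11}: it is recalled from \cite{henrot} as a standard fact of capacity theory, so there is no in-paper argument to compare against. Your proof is essentially the standard one found in that reference (quasi-continuous representatives and q.e.\ convergence of a subsequence for the forward implication; reduction to bounded nonnegative truncations, the shift $(v-\eps)^+$, and a capacitary-potential cutoff $1-p_G$ followed by a spatial cutoff and mollification for the converse), and it is correct in its essentials, including your identification of the capacitary cutoff as the crux. The one step you should make explicit is the silent passage from ``$\tilde v=0$ q.e.\ on $D\setminus\Omega$'' to ``$\tilde v=0$ q.e.\ on $\R^N\setminus\Omega$'', which the global capacitary construction requires: it follows by applying your forward implication to the inclusion $D\subset\R^N$, since $v\in H^1_0(D)$ already forces $\tilde v=0$ q.e.\ on $\R^N\setminus D$.
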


\section{Hausdorff convergence} \label{app_hausdorff}
We recall here basic notions of Hausdorff topology, taken from \cite{henrot}. We first introduce the Hausdorff distance for compact sets. Let ${\cal K}$ the set of all non-empty compact sets of $\R^N$, $N \ge 1$. For $K_1, K_2 \in {\cal K}$, we define
$$ d_H(K_1,K_2) \: := \: \max\left(\rho(K_1,K_2), \rho(K_2,K_1)\right), \quad      \rho(K,K') \: := \: \sup_{x \in K} d(x,K'). $$
It is an easy exercise to show that $d_H$ defines a distance on ${\cal K}$. Sequences that converge with respect to this distance are said to converge in the Hausdorff sense. One has the following basic properties
\begin{proposition} \label{prop1}\ 

\begin{enumerate}
\item A decreasing sequence of non-empty compact sets converges in the Hausdorff sense to its intersection. 
\item An increasing sequence of non-empty compact sets converges in the Hausdorff sense to the closure of its union.
\item Inclusion is stable for convergence in the Hausdorff sense.  
\item The Hausdorff convergence preserves connectedness. More generally, if $(K_n)_{n \in \N}$ converges to $K$, and $K_n$ has at most $p$ connected components, $K$ has at most $p$ connected components. 
 \end{enumerate}
\end{proposition}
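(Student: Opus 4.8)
The plan is to verify the four properties directly from the definition of the Hausdorff distance $d_H$, reducing each to an elementary statement about the one-sided excess $\rho$. For the monotone cases (1) and (2) I would first observe that in each situation one of the two excesses vanishes identically: if $(K_n)$ decreases to $K=\cap_n K_n$ then $K\subset K_n$ forces $\rho(K,K_n)=0$, while if $(K_n)$ increases to $K=\overline{\cup_n K_n}$ then $K_n\subset K$ forces $\rho(K_n,K)=0$. It then remains to control the other excess. For (1) I would argue by contradiction: if $\rho(K_n,K)\not\to 0$, pick along a subsequence $x_n\in K_n$ with $d(x_n,K)\ge\varepsilon$; all such $x_n$ lie in the compact set $K_1$, so a further subsequence converges to some $x$, and since $x_n\in K_m$ for $n\ge m$ and each $K_m$ is closed, $x\in\cap_m K_m=K$, contradicting $d(x_n,K)\ge\varepsilon$. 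For (2), given $\varepsilon>0$ I would use compactness of $K$ to cover it by finitely many balls $B(y_i,\varepsilon)$ with centres $y_i\in\cup_n K_n$ (which is dense in $K$); choosing $n$ beyond the finitely many indices for which $y_i\in K_n$ then forces $\rho(K,K_n)\le\varepsilon$.

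For (3), suppose $A_n\to A$ and $B_n\to B$ in the Hausdorff sense with $A_n\subset B_n$. Fix $x\in A$. From $\rho(A,A_n)\to 0$ I can select $a_n\in A_n$ with $d(x,a_n)\to 0$; since $a_n\in B_n$ and $\rho(B_n,B)\to 0$ we get $d(a_n,B)\to 0$, whence $d(x,B)=0$ and, $B$ being compact, $x\in B$. Thus $A\subset B$.

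The substantive part is (4), and I expect it to be the main obstacle. The idea is to prove the contrapositive: if the limit $K$ has at least $p+1$ connected components, then so do all $K_n$ for $n$ large. The key topological input is that in a compact metric space the connected components coincide with the quasi-components, so any two points lying in distinct components can be separated by a set that is simultaneously open and closed in $K$. Choosing points $x_1,\dots,x_{p+1}$ in $p+1$ distinct components and passing to the finite Boolean algebra generated by the corresponding separating clopen sets, I obtain a clopen partition of $K$ whose atoms separate the $x_i$; grouping atoms yields a decomposition $K=F_1\cup\dots\cup F_{p+1}$ into non-empty, pairwise disjoint compact (clopen) pieces. Compactness then gives $\delta:=\min_{i\ne j} d(F_i,F_j)>0$.

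Finally I would transfer this separation to $K_n$. For $n$ large enough that $d_H(K_n,K)<\delta/3$, the bound $\rho(K_n,K)<\delta/3$ forces $K_n\subset\bigcup_i N_{\delta/3}(F_i)$, and these $\delta/3$-neighbourhoods are pairwise disjoint because $d(F_i,F_j)\ge\delta$; meanwhile $\rho(K,K_n)<\delta/3$ guarantees that each $N_{\delta/3}(F_i)$ meets $K_n$. Hence the sets $K_n\cap N_{\delta/3}(F_i)$, $i=1,\dots,p+1$, form a partition of $K_n$ into $p+1$ non-empty pieces that are relatively open and, being finitely many and disjoint, also relatively closed; so $K_n$ has at least $p+1$ components, the desired contradiction. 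The case $p=1$ specializes to the preservation of connectedness. The only delicate point, and the one I would be most careful about, is the components-equal-quasi-components fact used to manufacture the clopen partition; everything else is a routine triangle-inequality manipulation of $\rho$.
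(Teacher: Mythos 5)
Your proof is correct. Note, however, that the paper does not prove Proposition \ref{prop1} at all: it is stated in Appendix \ref{app_hausdorff} as background material and simply attributed to the book of Henrot and Pierre \cite{henrot}, so there is no in-paper argument to compare against. Your self-contained treatment is sound: in (1) and (2) the observation that one of the two excesses $\rho$ vanishes by monotonicity is exactly the right reduction (in (1) you should perhaps say a word on why $\cap_n K_n\neq\emptyset$, which follows from the finite intersection property, and in (2) the statement tacitly assumes the union is bounded so that its closure is compact -- your finite subcover step uses this); (3) is a clean triangle-inequality argument; and in (4) the contrapositive via a clopen partition $K=F_1\cup\dots\cup F_{p+1}$ with $\delta=\min_{i\neq j}d(F_i,F_j)>0$, transferred to $K_n$ through disjoint $\delta/3$-neighbourhoods, is the standard and correct route. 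The one genuinely non-elementary input, which you rightly isolate, is that components coincide with quasi-components in a compact metric space, so that points in distinct components of $K$ can be separated by clopen sets; this is a classical theorem (valid in all compact Hausdorff spaces) and, once invoked, the grouping of the atoms of the finite Boolean algebra into $p+1$ non-empty clopen pieces and the final counting of components of $K_n$ are all airtight. In short: where the paper buys these facts off the shelf, you have supplied a complete and correct proof at the cost of one citation to general topology.
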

A remarkable feature of the Hausdorff topology is given by the following 
\begin{proposition} \label{prop2}
Any bounded sequence of $({\cal K}, d_H)$ has a convergent subsequence. 
\end{proposition}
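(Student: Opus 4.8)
The plan is to recognize the statement as the classical Blaschke selection principle and to prove it by showing that the space of non-empty compact subsets of a fixed large ball, equipped with $d_H$, is totally bounded and complete; the existence of convergent subsequences then follows. First I would reduce to a fixed compact ambient set: by definition a bounded sequence $(K_n)$ is contained in some fixed closed ball $B$, which is compact, so each $K_n$ is a non-empty compact subset of $B$.

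Second, I would extract a Cauchy subsequence by exploiting total boundedness of $B$. For each integer $j$, fix a finite $(1/j)$-net $\{x^j_1, \dots, x^j_{M_j}\}$ of $B$, and to each $K_n$ attach its signature $\sigma_j(n) := \{\, i : d(x^j_i, K_n) \le 1/j \,\} \subset \{1, \dots, M_j\}$. Since there are only finitely many possible signatures at each level $j$, a diagonal argument over $j$ yields a subsequence $(K_{n_k})$ along which, for every fixed $j$, the signature is eventually constant. A short estimate then shows the subsequence is Cauchy: any point $x \in K_{n_k}$ lies within $1/j$ of some net point $x^j_i$, whose membership in the common signature forces a point of $K_{n_l}$ within $1/j$ of $x^j_i$, hence within $2/j$ of $x$; so $d_H(K_{n_k}, K_{n_l}) \le 2/j$ once $k,l$ are large.

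Third, I would construct the limit and verify convergence. Setting $L_m := \overline{\bigcup_{k \ge m} K_{n_k}}$, the $L_m$ form a decreasing sequence of non-empty compact sets, so by Proposition~\ref{prop1}(1) they converge to $L := \bigcap_m L_m$, which is non-empty and compact. It then remains to check $d_H(K_{n_k}, L) \to 0$ on both sides. The bound $\rho(K_{n_k}, L) \to 0$ uses only compactness of $B$: escaping points $x_k \in K_{n_k}$ with $d(x_k, L) \ge \eps$ would subconverge to a point lying in every $L_m$, hence in $L$, a contradiction. The reverse bound $\rho(L, K_{n_k}) \to 0$ is where the Cauchy property enters: given $\eps$ and the corresponding Cauchy threshold $N_\eps$, any $z \in L \subset L_{N_\eps}$ is within $\eps$ of some $K_{n_j}$ with $j \ge N_\eps$, whence $d(z, K_{n_k}) \le \eps + d_H(K_{n_j}, K_{n_k}) < 2\eps$ for all $k \ge N_\eps$.

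The main obstacle is the middle step, namely producing a \emph{coherent} (Cauchy) subsequence rather than merely a set-theoretic upper limit. The full sequence generally does not converge, as shown by the alternating example $K_n \in \{A, B\}$ with $A \cap B = \emptyset$, for which $\rho(K_n, L) = 0$ but $\rho(L, K_n) \not\to 0$; this is exactly what the pigeonhole-and-diagonal bookkeeping over the finite nets is designed to repair. Once a Cauchy subsequence is in hand, identifying its limit with the decreasing intersection $L$ via Proposition~\ref{prop1}(1) and checking the two one-sided distances is routine.
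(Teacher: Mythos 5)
The paper states this proposition without proof, citing \cite{henrot}: it is the classical Blaschke selection theorem, so there is no in-paper argument to compare against. Your proof is the standard one and it is correct: the reduction to a fixed compact ball $B$, the pigeonhole-and-diagonal extraction over finite $(1/j)$-nets with the resulting $d_H(K_{n_k},K_{n_l})\le 2/j$ Cauchy estimate, and the identification of the limit as $L=\bigcap_m L_m$ with the two one-sided verifications $\rho(K_{n_k},L)\to 0$ and $\rho(L,K_{n_k})\to 0$ are all sound. You also correctly isolate the one genuinely non-trivial point, namely that only the bound $\rho(L,K_{n_k})\to 0$ uses the Cauchy property, as the alternating two-set example shows.
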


\medskip
From the  Hausdorff topology on ${\cal K}$, one can define a Hausdorff topology on {\em confined} open  sets, that is on all open sets included in some big given compact. Thus, let $B$ some compact domain in $\R^N$, $N \ge 1$, and ${\cal O}_B$ the set of all open sets included in $B$. The Hausdorff distance on  ${\cal O}_B$ is defined by: 
$$ d_H(\Omega_1, \Omega_2) \: := \: d_H(B\setminus\Omega_1, B\setminus\Omega_2) $$
 the r.h.s refering to the Hausdorff distance for compact sets. Let us note that this distance does not really depend on $B$: that is, for $B \subset B'$ two compact sets, and  $\Omega_1, \, \Omega_2$ in  ${\cal O}_B$,
 $$ d_H(B'\setminus\Omega_1, B'\setminus\Omega_2) \: = \: d_H(B\setminus\Omega_1, B\setminus\Omega_2). $$
\begin{proposition} \label{prop3}\ 
\begin{enumerate}
\item An increasing sequence of (confined) open sets converges in the Hausdorff sense to its union. 
\item A decreasing  sequence of (confined) open sets converges in the Hausdorff sense  to the interior of its intersection.
\item Inclusion is stable for convergence in the Hausdorff sense. 
\item Finite intersection is stable  for convergence in the Hausdorff sense
\item Let $(\Omega_n)$ a sequence that converges to $\Omega$ in the Hausdorff sense. Let $x \in \pa \Omega$. There exists a sequence $(x_n)$ with $x_n \in\pa \Omega_n$ that converges to $x$. 
\item Let $(\Omega_n)_{n \in \N}$ a sequence in ${\cal O}_B$. There exists an open set $\Omega \in {\cal O}_B$ and a subsequence $(\Omega_{n_k})_{k \in \N}$ that converges to $\Omega$ in the Hausdorff sense
 \end{enumerate}
\end{proposition}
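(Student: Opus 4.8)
The definition $d_H(\OM_1,\OM_2):=d_H(B\setminus\OM_1,B\setminus\OM_2)$ means that $\OM\mapsto B\setminus\OM$ is an \emph{isometry} from $({\cal O}_B,d_H)$ onto a subset of the space $({\cal K},d_H)$ of compact sets, and it is an order-reversing bijection onto the compacts containing $\pa B$ (note every $\OM\in{\cal O}_B$ is open in $B$, hence contained in $\inte B$, so $\pa B\subset B\setminus\OM$, while $\OM=B\setminus(B\setminus\OM)$). The plan is to translate each assertion into a statement about the complementary compact sets and invoke Proposition \ref{prop1} and Proposition \ref{prop2}. Items (1)--(4) then follow mechanically. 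For (1), an increasing $(\OM_n)$ gives a decreasing $(B\setminus\OM_n)$ with intersection $B\setminus\bigcup_n\OM_n$, so Proposition \ref{prop1}(1) yields $\OM_n\to\bigcup_n\OM_n$. For (2), a decreasing $(\OM_n)$ gives an increasing $(B\setminus\OM_n)$ which by Proposition \ref{prop1}(2) converges to $\overline{\bigcup_n(B\setminus\OM_n)}=\overline{B\setminus\bigcap_n\OM_n}$; the elementary identity "closure of the complement $=$ complement of the interior" rewrites this as $B\setminus\inte(\bigcap_n\OM_n)$, i.e. $\OM_n\to\inte(\bigcap_n\OM_n)$. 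Item (3) is immediate, since $\OM_n\subset\OM_n'$ reverses to $B\setminus\OM_n'\subset B\setminus\OM_n$ and Proposition \ref{prop1}(3) passes inclusions to limits. For (4) one writes $B\setminus(\OM_n\cap\OM_n')=(B\setminus\OM_n)\cup(B\setminus\OM_n')$ and uses the sublinear bound $d_H(A\cup A',C\cup C')\le\max\bigl(d_H(A,C),d_H(A',C')\bigr)$ (itself following from $\rho(A\cup A',C\cup C')\le\max(\rho(A,C),\rho(A',C'))$); the finite case follows by induction.

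\textbf{Item (5).} This is the first genuinely geometric point. Fix $x\in\pa\OM$ and $\eps>0$. As $x\in\overline{\OM}$, pick $a\in\OM$ with $|a-x|<\eps$; since $a\notin K:=B\setminus\OM$ and $\rho(B\setminus\OM_n,K)\to0$, the point $a$ cannot lie in $B\setminus\OM_n$ for $n$ large (otherwise $d(a,K)\le\rho(B\setminus\OM_n,K)\to0$, contradicting $a\in\OM$ open), so $a\in\OM_n$. On the other hand $x\in K$ and $\rho(K,B\setminus\OM_n)\to0$ provide $b_n\in B\setminus\OM_n$ with $b_n\to x$, so $|b_n-x|<\eps$ and $b_n\notin\OM_n$ for $n$ large. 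The segment $[a,b_n]\subset B(x,\eps)$ joins a point of the open set $\OM_n$ to a point of its complement, hence meets $\pa\OM_n$ at some $x_n^\eps\in B(x,\eps)$. Choosing $\eps=1/k$ and extracting diagonally produces $x_n\in\pa\OM_n$ with $x_n\to x$.

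\textbf{Item (6).} The complements $B\setminus\OM_n$ form a bounded sequence of non-empty compacts (each contains $\pa B$), so Proposition \ref{prop2} gives a subsequence with $B\setminus\OM_{n_k}\to K$ for some compact $K\subset B$. The only subtlety is to check that $\OM:=B\setminus K$ is open, hence in ${\cal O}_B$. For this one uses $\pa B\subset B\setminus\OM_{n_k}$ for all $k$: any $x\in\pa B$ satisfies $d(x,K)\le\rho(B\setminus\OM_{n_k},K)\to0$, whence $\pa B\subset K$. Consequently $B\setminus K=\inte B\setminus K=\inte B\cap K^c$ is open, and since $B\setminus\OM=K$ we get $\OM_{n_k}\to\OM$ in $({\cal O}_B,d_H)$.

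\textbf{Main obstacle.} Items (1)--(4) are mechanical translations through the complement map and carry no real difficulty. The two points requiring care are (5), where the crossing of $\pa\OM_n$ by the segment $[a,b_n]$ has to be extracted from the one-sided Hausdorff estimates on the complements, and (6), where one must verify that $\pa B$ is absorbed into the limit compact $K$ so that $B\setminus K$ is genuinely open; this absorption is exactly what guarantees the limit stays inside ${\cal O}_B$.
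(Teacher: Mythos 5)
The paper does not prove Proposition \ref{prop3}: it is recalled as standard material from \cite{henrot}, so there is no in-text argument to compare against. Your proof is correct and follows the expected route — the map $\OM\mapsto B\setminus\OM$ is by definition an isometry into $({\cal K},d_H)$, items (1)--(4) reduce to Proposition \ref{prop1} plus the subadditivity of $\rho$ under unions, item (5) is the segment-crossing argument between an interior point $a\in\OM_n$ and an exterior point $b_n\notin\OM_n$ inside $B(x,\eps)$, and item (6) hinges precisely on absorbing $\pa B$ into the limit compact so that $B\setminus K$ is open. The only tacit point is the degenerate case $\OM_n=\emptyset$ (where $\pa\OM_n=\emptyset$) in item (5) for finitely many small $n$, which is immaterial.
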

Let us note that the Hausdorff convergence of open sets, contrary to the one of compact sets, does not preserve connectedness. Let us finally point out the following result, to be used later on:
\begin{proposition}  \label{prop4}
If  $(\Omega_n)_{n \in \N}$  converges in the Hausdorff sense to $\Omega$ and $K$ is a compact subset of $\Omega$, then there exists $n_0$ such that $\Omega_n \supset K$ for $n \ge n_0$. 
\end{proposition}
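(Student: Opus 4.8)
The plan is to reduce this statement about open sets to one about their compact complements, where the Hausdorff distance is genuinely defined. Fix a compact domain $B$ containing all the $\Omega_n$ and $\Omega$, and set $F_n := B \setminus \Omega_n$ and $F := B \setminus \Omega$. Since $B$ is compact and each $\Omega_n$, $\Omega$ is open, both $F_n$ and $F$ are compact. By the very definition of the Hausdorff distance on ${\cal O}_B$, the convergence $\Omega_n \to \Omega$ means precisely that $d_H(F_n, F) \to 0$, and in particular the one-sided deviation $\rho(F_n, F) = \sup_{x \in F_n} d(x, F)$ tends to $0$.

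Next I would exploit the separation between $K$ and $F$. Because $K \subset \Omega$, we have $K \cap F = \emptyset$; as $K$ and $F$ are both compact and disjoint, their distance $\delta := d(K, F) = \inf_{x \in K,\, y \in F} |x - y|$ is strictly positive. I would then choose $n_0$ so large that $\rho(F_n, F) < \delta$ for all $n \ge n_0$, which says that every point of $F_n$ lies strictly within distance $\delta$ of $F$. The conclusion follows by contradiction: if some $x \in K \cap F_n$ existed for $n \ge n_0$, then $d(x, F) < \delta$ since $x \in F_n$, while simultaneously $d(x, F) \ge d(K, F) = \delta$ since $x \in K$ — a contradiction. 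Hence $K \cap F_n = \emptyset$, and since $K \subset B$ this gives $K \subset B \setminus F_n = \Omega_n$ for every $n \ge n_0$, as claimed.

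There is no genuine analytic difficulty here; the only points demanding care are bookkeeping ones. First, one must pass correctly through complements, using the definition $d_H(\Omega_1, \Omega_2) := d_H(B \setminus \Omega_1, B \setminus \Omega_2)$ together with the remark that this quantity is independent of the confining box $B$. Second, the strict positivity of $\delta$ rests essentially on the compactness of both $K$ and $F$: this is exactly what upgrades the mere disjointness $K \cap F = \emptyset$ into a uniform gap that the deviation $\rho(F_n, F)$ must eventually beat. Identifying and using that compactness is therefore the one step I would state with full precision.
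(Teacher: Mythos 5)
Your proof is correct: passing to the compact complements $F_n = B\setminus\Omega_n$, $F = B\setminus\Omega$, using the positive gap $\delta = d(K,F)>0$ (which holds since $K$ and $F$ are disjoint with $K$ compact and $F$ closed), and beating it with $\rho(F_n,F)\to 0$ is exactly the standard argument. The paper itself states this proposition without proof, citing \cite{henrot}, so there is nothing to compare against; your write-up supplies the expected elementary proof and has no gaps.
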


\section{$\gamma$-convergence of open sets}  \label{app_gammaconv}
Let $D$ be a bounded open set.  Let  $(\Omega_n)_{n \in \N}$ be  a sequence of open sets  included in  $D$. One says that  $(\Omega_n)_{n \in \N}$ $\gamma$-converges to $\Omega \subset D$ if for any $f \in H^{-1}(D)$, the sequence of  solutions $\p_n \in H^1_0(\Omega_n)$ of
$$ -\Delta \p_n = f \: \mbox{ in } \:  \Omega_n, \quad \p_n\vert_{\pa \Omega_n} = 0.$$
converges in $H^1_0(D)$  to the solution $\p \in H^1_0(\Omega)$ of
$$ -\Delta \p = f \: \mbox{ in } \:  \Omega, \quad \p\vert_{\pa \Omega} = 0.$$

In this definition, $H^1_0(\Omega)$ and $H^1_0(\Omega_n)$ are seen as subsets of $H^1_0(D)$, through extension by zero. In a dual way, $H^{-1}(D)$ is seen as a subset of 
$H^{-1}(\Omega_n)$ and $H^{-1}(\Omega)$. As for the Hausdorff convergence of open sets, the definition of $\gamma$-convergence does not depend on the choice of the confining set $D$.

\medskip
The notion of $\gamma$-convergence is extensively discussed in \cite{henrot}. The basic example of $\gamma$-convergence is given by  increasing sequences: 
\begin{proposition}  \label{prop8}
If $(\Omega_n)_{n \in \N}$ is an increasing sequence in $D$, it $\gamma$-converges to $\Omega \: = \:  \cup \, \Omega_n$. More generally, if  $(\Omega_n)_{n \in \N}$ is included in $\Omega$ and converges to $\Omega$ in the Hausdorff sense, then it $\gamma$-converges to $\Omega$. 
\end{proposition}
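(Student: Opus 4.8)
The plan is to identify the two Dirichlet solutions as orthogonal projections of one and the same vector in a single Hilbert space, and to reduce the whole statement to a single approximation estimate that is delivered by Proposition \ref{prop4}. Since an increasing sequence of confined open sets converges in the Hausdorff sense to its union (Proposition \ref{prop3}(1)) and is plainly contained in it, the first assertion is a particular case of the second; I would therefore only treat the general situation, namely $\Omega_n \subset \Omega$ with $\Omega_n \to \Omega$ in the Hausdorff sense.

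First I would equip $H := H^1_0(D)$ with the inner product $a(u,v) := \int_D \na u \cdot \na v$, which is equivalent to the usual one by the Poincar\'e inequality on the bounded set $D$. Through extension by zero, $H^1_0(\Omega_n)$ and $H^1_0(\Omega)$ are closed subspaces of $H$, and the inclusion $\Omega_n \subset \Omega$ gives $H^1_0(\Omega_n) \subset H^1_0(\Omega)$. Let $P_n$ and $P$ denote the $a$-orthogonal projections onto these subspaces. Given $f \in H^{-1}(D)$, let $w \in H$ be its Riesz representative, so that $a(w,v) = \langle f, v \rangle$ for all $v \in H$. The defining relation $a(\psi_n, v) = \langle f, v\rangle = a(w,v)$ for all $v \in H^1_0(\Omega_n)$ says exactly that $\psi_n = P_n w$; likewise $\psi = P w$.

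The key algebraic remark is that $H^1_0(\Omega_n) \subset H^1_0(\Omega)$ forces $P_n w = P_n(Pw)$: indeed $w - Pw$ is $a$-orthogonal to $H^1_0(\Omega)$, hence to the smaller space $H^1_0(\Omega_n)$, so that $P_n(w - Pw) = 0$ and $\psi_n = P_n w = P_n(Pw) = P_n \psi$. Consequently, establishing the $\gamma$-convergence amounts to showing that the best approximation of the single fixed element $\psi \in H^1_0(\Omega)$ by elements of $H^1_0(\Omega_n)$ tends to zero, i.e. $\mathrm{dist}_a\bigl(\psi, H^1_0(\Omega_n)\bigr) \to 0$. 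This is the only genuine step: the ``weak-closedness'' half of the usual Mosco criterion is free here precisely because $\Omega_n \subset \Omega$, so that any weak limit of functions in $H^1_0(\Omega_n)$ automatically lies in the weakly closed subspace $H^1_0(\Omega)$.

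To establish the approximation estimate, I would fix $\eps > 0$ and choose $\f \in C^\infty_c(\Omega)$ with $\|\psi - \f\|_a < \eps$, which is possible since $C^\infty_c(\Omega)$ is dense in $H^1_0(\Omega)$. Its support $K := \supp \f$ is a compact subset of $\Omega$, so Proposition \ref{prop4} provides $n_0$ with $K \subset \Omega_n$ for all $n \ge n_0$; then $\f \in H^1_0(\Omega_n)$ and $\mathrm{dist}_a\bigl(\psi, H^1_0(\Omega_n)\bigr) \le \|\psi - \f\|_a < \eps$ for $n \ge n_0$. Letting $\eps \to 0$ yields $P_n \psi \to \psi$ in $H$, hence $\psi_n \to \psi$ in $H^1_0(D)$, which is exactly the $\gamma$-convergence. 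I expect the main obstacle to be conceptual rather than computational: the crux is recognizing that the two Dirichlet solutions are projections of the same vector $w$ onto nested subspaces, which collapses the problem to the lone density/approximation estimate that Proposition \ref{prop4} settles by turning ``compact subset of the limit'' into ``eventually contained in the approximants''.
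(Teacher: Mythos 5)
Your proof is correct and complete. Note that the paper itself states Proposition \ref{prop8} without proof, referring to \cite{henrot} for the theory of $\gamma$-convergence, so there is no in-paper argument to compare against; what you give is essentially the standard variational proof. The two pillars are sound: the identity $\psi_n = P_n w = P_n \psi$ follows correctly from the nesting $H^1_0(\Omega_n)\subset H^1_0(\Omega)$ (extension by zero of $C^\infty_c(\Omega_n)$-approximants shows the inclusion of subspaces, and $w-Pw\perp_a H^1_0(\Omega)$ implies $P_n(w-Pw)=0$), and the reduction of $\gamma$-convergence to $\mathrm{dist}_a\bigl(\psi,H^1_0(\Omega_n)\bigr)\to 0$ is exactly what the density of $C^\infty_c(\Omega)$ in $H^1_0(\Omega)$ plus Proposition \ref{prop4} delivers. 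The reduction of the first assertion to the second via Proposition \ref{prop3}(1) is also legitimate (and in the increasing case one could even bypass Proposition \ref{prop4} by a finite subcover argument). The hypothesis $\Omega_n\subset\Omega$ is used exactly where it must be --- without it the projection identity fails and one would need the full Mosco machinery of Proposition \ref{prop10} --- so nothing is missing.
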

In general, Hausdorff converging sequences are not $\gamma$-converging. We refer to \cite{henrot} for counterexamples, with domains $\Omega_n$ that have  more and more holes as $n$ goes to infinity. This kind of  counterexamples, reminiscent of homogenization problems, is the only one in dimension 2, as proved by Sverak \cite{sverak}:
\begin{proposition} \label{prop9}
Let  $(\Omega_n)_{n \in \N}$ be a sequence of open sets in $\R^2$, included in $D$.  Assume that the number of connected components of  $D \setminus \Omega_n$ is bounded uniformly in $n$. If  $(\Omega_n)_{n \in \N}$ converges in the Hausdorff sense to $\Omega$, it $\gamma$-converges to $\Omega$.  
\end{proposition}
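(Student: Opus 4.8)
The plan is to establish $\gamma$-convergence through the equivalent notion of Mosco convergence of the spaces $H^1_0(\OM_n)$ towards $H^1_0(\OM)$, all regarded as closed subspaces of $H^1_0(D)$ via extension by zero. Recall the standard reformulation (see \cite{henrot}): $(\OM_n)$ $\gamma$-converges to $\OM$ if and only if both of the following hold: \textbf{(M1)} for every $u\in H^1_0(\OM)$ there exist $u_n\in H^1_0(\OM_n)$ with $u_n\to u$ strongly in $H^1_0(D)$; and \textbf{(M2)} whenever $u_{n_k}\in H^1_0(\OM_{n_k})$ satisfies $u_{n_k}\rightharpoonup u$ weakly in $H^1_0(D)$, then $u\in H^1_0(\OM)$. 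First I would dispatch (M1), which holds under Hausdorff convergence alone, with no hypothesis on the number of components: if $u\in{\cal D}(\OM)$ then $\supp u$ is a compact subset of $\OM$, hence contained in $\OM_n$ for $n$ large by Proposition \ref{prop4}, so $u\in H^1_0(\OM_n)$ eventually and one takes $u_n=u$; for general $u\in H^1_0(\OM)$ one approximates by ${\cal D}(\OM)$ and diagonalizes.

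The substance of the proof is condition (M2), and this is exactly where both the planar setting and the bound on the number of components enter. Writing $K_n:=D\setminus\OM_n$ and $K:=D\setminus\OM$, the hypotheses give $K_n\to K$ in the Hausdorff sense with at most $p$ connected components, so by Proposition \ref{prop1} the limit $K$ also has at most $p$ components. I would decompose each $K_n$ into its connected components $K_n^1,\dots,K_n^p$ (padding with the empty set if there are fewer), and, using the compactness of the Hausdorff topology (Proposition \ref{prop2}), extract a subsequence along which each $K_n^j$ converges in the Hausdorff sense to a connected compact set $K^j$ (connectedness being preserved by Proposition \ref{prop1}), with $K=\bigcup_{j=1}^p K^j$. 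Since $u_{n_k}$ vanishes quasi-everywhere on each $K_{n_k}^j$ by Proposition \ref{prop11}, it suffices to show that the weak limit $u$ vanishes quasi-everywhere on each $K^j$; the desired conclusion $u\in H^1_0(\OM)$ then follows from Proposition \ref{prop11} again.

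The per-component analysis splits into two cases. If $K^j$ reduces to a single point (or is empty), then $\capa(K^j)=0$ by Proposition \ref{prop6}, so vanishing quasi-everywhere on $K^j$ is automatic. The decisive case is when $K^j$ is a genuine continuum, i.e. a connected compact set of positive diameter. Here I would invoke the characteristically two-dimensional fact that a continuum carries capacity bounded below in a scale-invariant way: a connected set joining the two boundary circles of a planar annulus $B(x,2r)\setminus B(x,r)$ has relative capacity, with respect to a concentric ball, bounded below by a universal constant $c_0>0$ independent of $r$. For $x\in K^j$ and $r<\tfrac12\,\mathrm{diam}\,K^j$, connectedness forces $K_n^j$ to cross such an annulus around $x$ for $n$ large, so this lower bound applies uniformly to $K_n^j$. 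Feeding this uniform non-degeneracy into the capacitary potentials of $K_n^j$ (equal to $1$ quasi-everywhere on $K_n^j$), together with a capacitary Poincar\'e inequality of Maz'ya type, one passes to the weak limit and shows that the constraint $u_{n_k}=0$ on $K_n^j$ survives, yielding $u=0$ quasi-everywhere on $K^j$.

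I expect the continuum case to be the main obstacle, and it is precisely where dimension two is essential: in $\R^N$ with $N\ge 3$ a one-dimensional continuum has zero capacity, so the scale-invariant lower bound above, and with it the whole transfer argument, breaks down; this is consistent with the classical homogenization counterexamples, which necessarily involve infinitely many shrinking holes. The bound on the number of connected components is used crucially in the reduction, since it is what permits the finite decomposition into continua via the Hausdorff-compactness extraction; without it one cannot isolate a single continuum carrying the vanishing constraint, and (M2) genuinely fails.
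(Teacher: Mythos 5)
A preliminary remark: the paper does not prove Proposition \ref{prop9} at all --- it is quoted as Sver\'ak's theorem, with references to \cite{sverak} and \cite{henrot} --- so your proposal must stand as a self-contained proof of a cited result. Its architecture is the correct one and matches the known proofs: the reduction to the two Mosco conditions, the observation that (M1) follows from Hausdorff convergence alone via Proposition \ref{prop4}, the decomposition of $D\setminus\OM_n$ into at most $p$ components with Hausdorff-convergent subsequences, the dichotomy between degenerate limit components (zero capacity, hence harmless by Proposition \ref{prop6}) and genuine continua, and the identification of the scale-invariant capacity lower bound for planar continua as the specifically two-dimensional input. The closing remark on why the argument fails for $N\ge 3$ is also on target.

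The gap sits in the one sentence where the theorem actually lives: ``feeding this uniform non-degeneracy into the capacitary potentials \dots one passes to the weak limit and shows that the constraint survives.'' This does not follow from what precedes it. The natural implementation --- apply Maz'ya's capacitary Poincar\'e inequality on balls $B(x_0,2r)$, $x_0\in K^j$, using $\capa\bigl(K_{n}^j\cap \overline{B(x_0,r)}\,;B(x_0,2r)\bigr)\ge c_0$ to get $\frac{1}{|B(x_0,2r)|}\int_{B(x_0,2r)}|u_{n}|^2\le C\int_{B(x_0,2r)}|\na u_{n}|^2$, then let $n\to\infty$ and $r\to0$ --- stalls, because weak convergence in $H^1_0(D)$ gives no upper control of the \emph{local} Dirichlet energies: the gradients of $u_n$ may concentrate near $K^j$, and what you obtain for the averages of $|u|^2$ over small balls is a fixed constant, not $0$. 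Closing this requires an extra device, for instance: (a) pass to a weak-$*$ limit measure $\nu$ of $|\na u_n|^2\,dx$ and note that $\nu(\overline{B(x_0,2r)})\to\nu(\{x_0\})$ as $r\to 0$, which vanishes outside an at most countable --- hence capacity-null --- set of atoms, so that the precise representative of $u$ vanishes q.e.\ on $K^j$; or (b) abandon arbitrary sequences $u_n$ altogether, reduce $\gamma$-convergence to the convergence of the single family $w_n$ solving $-\D w_n=1$ in $\OM_n$, and use the uniform capacity density of the $K_n^j$ to produce Wiener-type barriers, hence a uniform boundary modulus of continuity and equicontinuity of $(w_n)$; this is the route of \cite{henrot}. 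Without one of these, ``pass to the weak limit'' is an assertion of the theorem rather than a proof of it. (Minor loose ends: justify $K=\bigcup_j K^j$ by a pigeonhole on the indices of the components containing points approximating a given $x\in K$, and note that the $K^j$ need not be the connected components of $K$ --- harmless, since you only need $u=0$ q.e.\ on each $K^j$.)
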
  
This result will be a crucial ingredient  of the next sections.

\medskip
One can characterize the $\gamma$-convergence in terms of the Mosco-convergence of $H^1_0(\Omega_n)$ to $H^1_0(\Omega)$. Namely: 
\begin{proposition} \label{prop10}
 $(\Omega_n)_{n \in \N}$ $\gamma$-converges to $\Omega$ if and only if the following two conditions are satisfied: 
 \begin{enumerate}
 \item For all $\p \in H^1_0(\Omega)$, there exists a sequence $(\p_n)_{n \in \N}$  with $\p_n$ in $H^1_0(\Omega_n)$ that converges to $\p$. 
 \item For any sequence $(\p_n)_{n \in \N}$ with $\p_n$ in $H^1_0(\Omega_n)$, weakly converging to $\p$ in $H^1_0(D)$, $\p \in H^1_0(\Omega)$. 
 \end{enumerate}
 \end{proposition}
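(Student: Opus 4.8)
Since $D$ is bounded, the Poincaré inequality makes $(u,v):=\int_D \na u\cdot\na v$ an inner product on $H^1_0(D)$ whose norm is equivalent to the usual one. With respect to this inner product, the solution $\psi_n\in H^1_0(\Omega_n)$ of $-\Delta\psi_n=f$ is exactly the orthogonal projection onto the closed subspace $H^1_0(\Omega_n)$ of the Riesz representative of $f$: indeed $\psi_n$ is characterized by $\psi_n\in H^1_0(\Omega_n)$ together with $(\psi_n,v)=\langle f,v\rangle$ for all $v\in H^1_0(\Omega_n)$, and likewise $\psi$ is the projection onto $H^1_0(\Omega)$. Thus $\gamma$-convergence asserts that these orthogonal projections converge in the strong operator topology, while conditions (1)--(2) are the standard description of the Mosco convergence $H^1_0(\Omega_n)\to H^1_0(\Omega)$ of the subspaces. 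The plan is to establish the two implications directly.

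For the implication (1)--(2) $\Rightarrow$ $\gamma$-convergence, I would fix $f\in H^{-1}(D)$ and test the equation for $\psi_n$ against $\psi_n$ itself: by Cauchy--Schwarz and Poincaré, $(\psi_n)$ is bounded in $H^1_0(D)$, so a subsequence converges weakly to some $\psi_*$. Condition (2) gives $\psi_*\in H^1_0(\Omega)$. To identify $\psi_*$ with $\psi$, I pick any $v\in H^1_0(\Omega)$ and a recovery sequence $v_n\to v$ provided by condition (1), with $v_n\in H^1_0(\Omega_n)$; passing to the limit in $(\psi_n,v_n)=\langle f,v_n\rangle$ (weak times strong on the left) yields $(\psi_*,v)=\langle f,v\rangle$ for all $v\in H^1_0(\Omega)$, i.e. $\psi_*=\psi$. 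Uniqueness of the limit promotes this to convergence of the whole sequence. Finally the energy identity $\|\na\psi_n\|_{L^2}^2=\langle f,\psi_n\rangle\to\langle f,\psi\rangle=\|\na\psi\|_{L^2}^2$, combined with the weak convergence, upgrades it to strong convergence in $H^1_0(D)$.

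For the converse, assuming $\gamma$-convergence, condition (1) is immediate: given $\psi\in H^1_0(\Omega)$, set $f:=-\Delta\psi\in H^{-1}(D)$, so that $\psi$ is the solution of the Dirichlet problem in $\Omega$; $\gamma$-convergence then furnishes solutions $\psi_n\in H^1_0(\Omega_n)$ converging strongly to $\psi$. The delicate point is condition (2). Let $\psi_n\in H^1_0(\Omega_n)$ with $\psi_n\rightharpoonup\psi$ weakly in $H^1_0(D)$, and let $\zeta_n\in H^1_0(\Omega_n)$ solve $-\Delta\zeta_n=-\Delta\psi$ in $\Omega_n$, so that $\zeta_n$ is the orthogonal projection of $\psi$ onto $H^1_0(\Omega_n)$. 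By $\gamma$-convergence $\zeta_n\to\zeta$ strongly, where $\zeta$ is the orthogonal projection of $\psi$ onto $H^1_0(\Omega)$. The orthogonality $(\zeta_n-\psi,\psi_n)=0$ gives $(\zeta_n,\psi_n)=(\psi,\psi_n)$; letting $n\to\infty$ (strong times weak on the left) yields $(\zeta,\psi)=\|\na\psi\|_{L^2}^2$. On the other hand, testing the identity defining $\zeta$ with $\zeta$ itself gives $(\zeta,\psi)=\|\na\zeta\|_{L^2}^2$. Hence $\|\na\zeta\|_{L^2}=\|\na\psi\|_{L^2}$, and since $\zeta$ is the orthogonal projection of $\psi$, the Pythagorean identity $\|\na\psi\|_{L^2}^2=\|\na\zeta\|_{L^2}^2+\|\na(\psi-\zeta)\|_{L^2}^2$ forces $\psi=\zeta\in H^1_0(\Omega)$.

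The routine part is the first implication, a soft compactness-plus-energy argument. The heart of the matter is condition (2) in the second implication: a general weakly convergent sequence $(\psi_n)$ solves no fixed Dirichlet problem, so $\gamma$-convergence cannot be applied to it directly. The device is to introduce the comparison sequence $\zeta_n$, namely the projections of the \emph{fixed} limit $\psi$, to which $\gamma$-convergence does apply, and then to extract membership of $\psi$ in $H^1_0(\Omega)$ from the orthogonality relation together with the fact that an orthogonal projection strictly decreases the norm off its range. This is precisely Mosco's theorem specialized to the orthogonal projections onto $H^1_0(\Omega_n)$.
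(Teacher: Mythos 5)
Your proof is correct. The paper does not actually prove Proposition \ref{prop10}: it quotes it as the standard Mosco characterization of $\gamma$-convergence, with a reference to \cite{henrot}. Your argument — identifying the Dirichlet solutions with orthogonal projections of the Riesz representative of $f$ in the inner product $\int_D \nabla u\cdot\nabla v$, and, for condition (2), comparing the weakly convergent sequence with the projections $\zeta_n$ of its fixed limit and concluding via the Pythagorean identity that $\psi=\zeta\in H^1_0(\Omega)$ — is precisely the standard proof, and every step (boundedness and weak extraction, the weak-times-strong pairings using the recovery sequence, the energy identity upgrading weak to strong convergence, and the final orthogonality argument) checks out.
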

One can also  characterize  $\gamma$-convergence with capacity, see \cite[Proposition 3.5.5 page 114]{henrot}. Let us finally  mention that the notion of $\gamma$-convergence of open sets is related to the more standard $\Gamma$-convergence of Di Giorgi. Losely, $\Omega_n$ $\gamma$-converges to $\Omega$ if the corresponding Dirichlet energy functional $J_{\Omega_n}$ $\Gamma$-converges to $J_\Omega$: see \cite[section 7.1.1]{henrot} for all details.

\section{Convergence of biholomorphisms in exterior domains} \label{app_biholo}

We remind here the notion of kernel convergence introduced by Caratheodory in 1912, see \cite[p28]{pomm-1} (the word {\em domain} refers to a connected open set): 
\begin{definition}
Let $(F_n)$ be a sequence of  domains,  with $0 \in F_n$ for all $n$. Its kernel $F$ (with respect to $0$) is  the set consisting of $0$ together with all points $w \in \C$ that satisfy: {\em there exists a domain $H$ including $0$ and $w$ such that $H \subset F_n$ for all  $n$ large enough.}

\medskip   
If $F$ is the kernel of any subsequence of $(F_n)$, we say that $(F_n)$ converges to $F$ in the kernel sense. 
\end{definition}

This  type of geometric convergence is related to the famous Caratheodory theorem, see \cite[Theorem 1.8, p29]{pomm-1}:
\begin{proposition}
Let $(f_n)$ be a sequence of biholomorphisms from $D:=\{ |z| < 1 \}$ to $F_n := f_n(D)$, with $f_n(0) = 0$, $f'_n(0) > 0$. 
Then, $f_n$ converges locally uniformly in $D$   if and only if
 $(F_n)$ converges to its kernel $F$ and if $F \neq \C$. 
Moreover, the limit function maps  $D$ onto $F$. 
\end{proposition}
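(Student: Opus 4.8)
The statement is the Carath\'eodory kernel theorem, and the plan is to combine the classical distortion estimates for univalent maps with normal-family and Hurwitz arguments; the heart of the matter is to identify the image of the limit map with the kernel. Write $\lambda_n := f_n'(0) > 0$. I would first prove the implication from kernel convergence to local uniform convergence. Since the kernel $F$ is a domain containing $0$ as an interior point, there is $r>0$ with $B(0,2r)\subset F$, hence $B(0,r)\subset F_n$ for $n$ large; applying the Schwarz lemma to the inverse $g_n := f_n^{-1}$ (which fixes $0$ and maps into $D$) on $B(0,r)$ gives $1/\lambda_n = g_n'(0) \le 1/r$, i.e. $\lambda_n \ge r$. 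For an upper bound, note that if $\lambda_{n_k}\to\infty$ along a subsequence, the Koebe one-quarter theorem yields $F_{n_k}\supset B(0,\lambda_{n_k}/4)$, so every point of $\C$ would lie in the kernel of that subsequence, forcing the kernel to be $\C$ and contradicting $F\neq\C$; hence $M:=\sup_n\lambda_n<\infty$. The Koebe growth theorem then bounds $|f_n(z)|\le \lambda_n |z|/(1-|z|)^2 \le M\rho/(1-\rho)^2$ on each disk $\{|z|\le\rho\}$, so $(f_n)$ is locally uniformly bounded and, by Montel's theorem, a normal family.

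Next I would extract a subsequence $f_{n_k}\to f$ locally uniformly. Then $f$ is holomorphic with $f(0)=0$ and $f'(0)=\lim_k \lambda_{n_k}\ge r>0$, so $f$ is non-constant; by Hurwitz's theorem a non-constant locally uniform limit of univalent maps is univalent, so $f$ is a biholomorphism of $D$ onto $\tilde F:=f(D)$. The key step is to show $\tilde F = F$. For $\tilde F\subset F$: given $w=f(z_0)$, pick $\rho\in(|z_0|,1)$; since $f$ is open and $f_{n_k}\to f$ uniformly on $\{|z|\le\rho\}$, an argument-principle (Rouch\'e) comparison shows that any compact subset of $f(\{|z|<\rho\})$ lies in $f_{n_k}(\{|z|<\rho\})\subset F_{n_k}$ for $k$ large, so a neighborhood of $w$ lies in $F_{n_k}$ eventually and $w$ belongs to the kernel. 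For $F\subset\tilde F$: given $w$ in the kernel, choose a domain $H\ni 0,w$ with $H\subset F_{n_k}$ for large $k$; the inverses $g_{n_k}$ are then defined and bounded (into $D$) on $H$, hence normal, and a subsequential limit $g$ satisfies $f\circ g=\mathrm{id}$ on $H$ by passing to the limit in $f_{n_k}\circ g_{n_k}=\mathrm{id}$. The maximum principle forces $g(H)\subset D$, so $w=f(g(w))\in\tilde F$. Thus $\tilde F=F$, and in particular $F\neq\C$ since $D$ and $\C$ are not conformally equivalent.

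Finally, uniqueness of the Riemann map closes the argument: any locally uniform subsequential limit is a biholomorphism of $D$ onto $F$ fixing $0$ with positive derivative at $0$, and such a map is unique by the Schwarz lemma; since the normal family $(f_n)$ has all its subsequential limits equal to this single $f$, the full sequence converges locally uniformly, which proves the reverse implication together with the assertion that the limit maps $D$ onto $F$. For the forward implication, if $f_n\to f$ locally uniformly then $f'(0)=\lim_n\lambda_n$; in the non-degenerate case $f$ is non-constant, hence univalent by Hurwitz, its image $F:=f(D)$ is a proper subdomain of $\C$ (a univalent image of $D$ cannot be all of $\C$), and the very same two inclusions identify $F$ as the kernel of every subsequence, i.e. $F_n\to F$ in the kernel sense. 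I expect the main obstacle to be the double inclusion $\tilde F=F$: the inclusion $\tilde F\subset F$ needs a careful degree/Hurwitz argument to upgrade pointwise convergence of the $f_{n_k}$ into the statement that whole neighborhoods are captured by the $F_{n_k}$, while $F\subset\tilde F$ requires controlling the inverse maps on the possibly irregular domains $H$ furnished by the definition of the kernel.
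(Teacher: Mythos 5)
The paper offers no proof of this proposition: it is the classical Carath\'eodory kernel theorem, stated in Appendix~D and attributed to Pommerenke (Theorem~1.8 there), so there is no in-paper argument to compare against. Your write-up is the standard textbook proof, and its main line is correct: the lower bound $\lambda_n\ge r$ via the Schwarz lemma applied to $f_n^{-1}$ on a disk eventually contained in all the $F_n$, the upper bound on $\lambda_n$ via the Koebe one-quarter theorem and $F\neq\C$, normality via the growth theorem, univalence of limits via Hurwitz, the two inclusions $f(D)\subset F$ (Rouch\'e/argument principle) and $F\subset f(D)$ (normality of the inverses on a kernel domain $H$ and passage to the limit in $f_{n_k}\circ g_{n_k}=\mathrm{id}$, using the maximum principle to keep $g(H)$ inside $D$), and uniqueness of the normalized Riemann map to upgrade subsequential to full convergence. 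One small step worth writing out: ``$B(0,2r)\subset F$ hence $B(0,r)\subset F_n$ for $n$ large'' requires covering the compact set $\overline{B(0,r)}$ by finitely many of the domains $H_w$ furnished by the definition of the kernel.

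The one genuine omission is the degenerate case. With the paper's definition, the kernel may reduce to $F=\{0\}$ (no disk about the origin lies in $F_n$ for all large $n$), and the equivalence is still asserted then, the limit function being $f\equiv 0$. Your backward implication assumes from the outset that $F$ contains a disk $B(0,2r)$, and in the forward implication you only treat ``the non-degenerate case,'' so as written the proof does not establish the stated proposition in full. The missing case is easy and should be included: if $F=\{0\}$ and $(F_n)$ converges to its kernel, then for every subsequence and every $\rho>0$ the disk $B(0,\rho)$ cannot lie in $F_{n_k}$ for all large $k$ (otherwise it would belong to the kernel of that subsequence, which equals $F$), and the Schwarz-lemma bound $\lambda_{n_k}\ge\rho$ then forces $\lambda_n\to 0$, whence $|f_n(z)|\le \lambda_n|z|/(1-|z|)^2\to 0$ locally uniformly; conversely, if $f_n\to 0$ then $\lambda_n\to 0$ and the same Schwarz estimate shows the kernel of every subsequence is $\{0\}$. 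For the paper's actual use of the proposition (Appendix~D, where $F$ is the image of the unbounded component $\Pi$ under an inversion and contains a fixed disk about $0$), the degenerate case never occurs, so your argument covers everything that is needed there.
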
 
From the Caratheodory theorem, it is possible to deduce the following property, which is crucial in our proof of Theorem \ref{theorem2} (notations are taken from the beginning of paragraph \ref{uniformestimates}):
\begin{proposition} %\label{propconvbiholo}
Let $\Pi$ be the unbounded connected component of $\Omega$. There is a unique biholomorphism $\mathcal{T}$ from $\Pi$ to $\D$, satisfying $\mathcal{T}(\infty)=\infty$, $\mathcal{T}'(\infty) > 0$. Moreover, one has  the following convergence properties:  
\begin{itemize}
\item[i)] $\mathcal{T}_n^{-1}$ converges uniformly locally to $\mathcal{T}^{-1}$ in $\D$. 
\item[ii)] $\mathcal{T}_n$ (resp. $\mathcal{T}_n'$) converges uniformly locally to $\mathcal{T}$ (resp. to $\mathcal{T}'$) in $\Pi$. 
\item[iii)] $|\mathcal{T}_n|$ converges uniformly locally to $1$ in $\Omega\setminus\Pi$.
\end{itemize}
\end{proposition}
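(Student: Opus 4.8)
The plan is to reduce the statement to the Carath\'eodory kernel theorem recalled above, which concerns biholomorphisms of the unit disk $D=\{|z|<1\}$. Up to a translation, I may assume that the origin belongs to $O_n$ for all $n$ large (which may be arranged in the construction of Proposition \ref{approxlemma}, since the $\overline{O_n}$ shrink onto the connected set $\mathcal{C}$). I then conjugate by the inversion $\iota(z):=1/z$, with the convention $\iota(\infty)=0$, $\iota(0)=\infty$. Because $0\in O_n$, the set $F_n:=\iota(\Omega_n)$ is a bounded domain containing $0$, and
\[ g_n:=\iota\circ\mathcal{T}_n^{-1}\circ\iota : D\longrightarrow F_n \]
is a biholomorphism with $g_n(0)=\iota(\mathcal{T}_n^{-1}(\infty))=\iota(\infty)=0$. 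From $\mathcal{T}_n(z)\sim\lambda_n z$ at infinity one gets $\mathcal{T}_n^{-1}(\zeta)\sim\zeta/\lambda_n$, hence $g_n(w)\sim\lambda_n w$ near $0$ and $g_n'(0)=\lambda_n>0$. Thus $g_n$ is exactly of the type covered by Carath\'eodory's theorem.

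The heart of the proof is to identify the kernel of $(F_n)$ with respect to $0$ as $F:=\iota(\Pi)$ (so that $0\in F$), and to check $F\neq\C$. That $\iota(\Pi)$ is contained in the kernel is seen as follows: for $w\in\iota(\Pi)$, $w\ne0$, the point $1/w$ lies in $\Pi$, so one can join it to a neighbourhood of $\infty$ by a compact connected set $H_0\subset\Pi$; by Proposition \ref{prop4} every compact subset of $\Omega\supset\Pi$ lies in $\Omega_n$ for $n$ large, whence $\iota(H_0)\subset F_n$ is a connected set containing $0$ and $w$, and a slight thickening yields an admissible domain $H$. Conversely, if $w\ne0$ is in the kernel, there is a fixed domain $H\ni 0,w$ with $H\subset F_n$ for $n$ large; then $\iota(H)\subset\Omega_n$ is a fixed domain joining $\infty$ to $1/w$ and disjoint from $\overline{O_n}$. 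Since $\overline{O_n}\to\mathcal{C}$ in the Hausdorff sense, no point of $\mathcal{C}$ can lie in the open set $\iota(H)$ (otherwise nearby points of $\overline{O_n}$ would meet it), so $\iota(H)\subset\Omega=\R^2\setminus\mathcal{C}$; being connected and containing $\infty$, it lies in $\Pi$, whence $1/w\in\Pi$ and $w\in\iota(\Pi)$. This step, where the Hausdorff convergence and the connectedness of $\mathcal{C}$ (Proposition \ref{prop1}) must be converted into the purely geometric kernel convergence, is the one I expect to be the main obstacle. Finally $F\ne\C$ because $\mathcal{C}\ne\emptyset$ forces $\Pi\ne\R^2$. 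Carath\'eodory's theorem then gives $g_n\to g$ locally uniformly on $D$, with $g:D\to F$ a biholomorphism and $g'(0)=\lim\lambda_n>0$. Undoing the conjugation, $\mathcal{T}_n^{-1}=\iota\circ g_n\circ\iota$ converges locally uniformly on $\D$ to $\mathcal{T}^{-1}:=\iota\circ g\circ\iota$, a biholomorphism $\D\to\Pi$ fixing $\infty$; this proves i) and the existence of $\mathcal{T}:=(\mathcal{T}^{-1})^{-1}$, while uniqueness follows from the rigidity of automorphisms of $\D$ fixing $\infty$ with positive derivative (they reduce, via $\iota$, to rotations of $D$ fixing $0$, hence to the identity).

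For ii), I would deduce $\mathcal{T}_n\to\mathcal{T}$ locally uniformly on $\Pi$ from the locally uniform convergence $\mathcal{T}_n^{-1}\to\mathcal{T}^{-1}$ on $\D$: this is the standard fact that inverses of a locally uniformly convergent sequence of univalent maps (with univalent limit) converge locally uniformly, applied on compact subsets of $\Pi$, which lie in $\Omega_n$ for $n$ large by Proposition \ref{prop4}. The convergence $\mathcal{T}_n'\to\mathcal{T}'$ is then automatic by the Weierstrass theorem, locally uniform convergence of holomorphic functions entailing that of all derivatives.

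For iii), let $K$ be a compact subset of a bounded component of $\Omega$, so $K\subset\Omega\setminus\Pi$. First I would establish a local bound $|\mathcal{T}_n|\le R$ on $K$: for $R>1$ the Jordan curve $\mathcal{T}_n^{-1}(\{|\zeta|=R\})$ converges by i) to $\mathcal{T}^{-1}(\{|\zeta|=R\})$, a curve in $\Pi$ separating $\infty$ from $\mathcal{C}$ and hence enclosing $K$; so $K$ is enclosed by $\mathcal{T}_n^{-1}(\{|\zeta|=R\})$ for $n$ large, which forces $1<|\mathcal{T}_n|\le R$ on $K$. The family $(\mathcal{T}_n)$ is therefore normal on each bounded component, and any locally uniform subsequential limit $g$ satisfies $|g|\ge 1$. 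If $|g(x_0)|>1$ at some $x_0$, then $\mathcal{T}_n^{-1}(\mathcal{T}_n(x_0))=x_0$ would converge to $\mathcal{T}^{-1}(g(x_0))\in\Pi$ by i), contradicting $x_0\notin\Pi$; hence $|g|\equiv1$, and by the open mapping theorem $g$ is a unimodular constant on the component. Since every subsequential limit has modulus $1$, $|\mathcal{T}_n|\to1$ uniformly on $K$, which is iii).
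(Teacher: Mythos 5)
Your proof is correct and follows essentially the same route as the paper: inversion to reduce to the Carath\'eodory kernel theorem, identification of the kernel with $\iota(\Pi)$ via the Hausdorff convergence, then ii) by convergence of inverses of univalent maps (the paper packages this as a Cauchy-formula computation) and iii) by contradiction using i). The only notable difference is the normalization: you translate so that $0\in O_n$, which requires an extra (arrangeable, but not automatic from Hausdorff convergence alone) property of the approximating obstacles, whereas the paper sidesteps this by translating by boundary points $x_n\in\pa\Omega_n$ converging to a point of $\pa\Omega$.
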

\begin{proof}%[Proof of the proposition:] 
Let us first  point out that,  because of Hausdorff convergence and Proposition \ref{prop4}, any compact of $\Omega$ is included in $\Omega_n$ for $n$ large enough. Thus,  the local convergence properties stated in ii) and iii) make sense. 

\medskip
Up to a change of coordinates, we can always assume that $0 \in \pa \Omega \subset {\cal C}$. By Proposition \ref{prop3}, there exists $x_n \in \pa\Omega_n$ converging to $0$. 
Then, if we introduce the domains
$$ F_n \: := \: \frac{1}{(\Omega_n - x_n) \cup \{\infty\}} \:  := \: \left\{ \frac{1}{z}, \: z+x_n  \in \Omega_n  \right\} \cup \{ 0 \},$$
and 
$$  F \: := \: \frac{1}{\Pi\cup \{\infty\}} \: := \: \left\{ \frac{1}{z}, \: z  \in \Pi \right\}\cup \{ 0 \}, $$
it follows easily from (H1'), Proposition \ref{prop3} and Proposition \ref{prop4} that $F_n$ converges to $F$ in the kernel sense.   Note that by the choice of $(x_n)$, the $F_n$'s  do not include $\infty$.

\medskip
Hence, by the Caratheodory theorem, the sequence of biholomorphisms $(f_n)$ defined  by 
$$f_n : D \mapsto f_n(D) = F_n, \quad   f_n(z) \: := \: \frac{1}{\mathcal{T}_n^{-1}(1/z) - x_n} $$
converges uniformly locally in $D$ to some function $f$ from $D$ onto $F$. By the Weierstrass convergence theorem, $f$ is holomorphic over $D$. Moreover, by a standard application of the Rouch\'e formula, as $f_n$ is one-to-one for all $n$, so is $f$.  Going on with standard arguments, $f$ is the unique  biholomorphism that maps $D$ to $F$ and that satisfies  $f(0) = 0$, $f'(0) >0$.  Back to $\mathcal{T}_n^{-1}$, this yields i) with $\Tc^{-1}(z):=\dfrac{1}{f(1/z)}$. Actually, one has clearly  uniform convergence of $\mathcal{T}_n^{-1}$ to $\mathcal{T}^{-1}$ in $\Delta_{\delta} := \{ |z| \ge 1+\delta \}$ for all $\delta > 0$.   Then, by the Weierstrass theorem, the sequence of derivatives $(\mathcal{T}_n^{-1})'$ converges locally uniformly to $(\mathcal{T}^{-1})'$. 

\medskip
As regards ii), let  $z_0 \in \Pi$, and    $J_n :=   \mathcal{T}_n^{-1}(\{ z', | z' -  \mathcal{T}(z_0)|= \delta \})$. By i), for $\delta > 0$ small enough and $n$ large enough, $J_n$ is a closed curve that encloses $z_0$ and is contained in $\Pi$. For all $z$ in a small enough neighborhood of $z_0$, we can then write the Cauchy formula: 
$$  \mathcal{T}_n(z) \: = \: \frac{1}{2i\pi} \int_{J_n} \frac{ \mathcal{T}_n(\xi)}{\xi-z} d\xi   \: =  \:  \frac{1}{2i\pi} \int_{\{ |\xi'-\Tc(z_0)| = \delta\}} \frac{\xi'}{ \mathcal{T}_n^{-1}(\xi')-z} \,  (\mathcal{T}_n^{-1})'(\xi') \, d\xi' $$
where the last equality comes from the change of variable $\xi =  \mathcal{T}_n^{-1}(\xi')$. Thanks to i), we may let $n$ go to infinity to obtain the convergence of  $\mathcal{T}_n$ to $\mathcal{T}$ uniformly in a neighborhood of $z_0$. Again, the convergence of derivatives follows from the Weierstrass theorem. This ends the proof of ii). 

\medskip
To obtain iii), we argue by contradiction. We assume {\it a contrario} that there exists a $\delta  > 0$ and a sequence $z_n$ located in a given  closed ball  $B$  of $\Omega\setminus \Pi$ such that $|\mathcal{T}_n(z_n)| \ge 1 +\delta$. Up to extract a subsequence, we can assume that $z_n \rightarrow z \in B$. By the  uniform convergence of $\mathcal{T}_n^{-1}$ in   $\Delta_\delta$  (see above) we have  $z  \in \mathcal{T}^{-1}(\Delta_\delta) \subset \Pi$. Thus, we reach a contradiction, which proves iii).    

\end{proof}

\end{document}